\crefname{equation}{}{}
\newtheorem{theorem}{Theorem}[section]
\newtheorem{proposition}[theorem]{Proposition}
\newtheorem{lemma}[theorem]{Lemma}
\newtheorem{corollary}[theorem]{Corollary}
\theoremstyle{definition}
\theoremstyle{remark}
\newtheorem*{remark}{Remark}
\newcommand{\abs}[1]{\left\lvert#1\right\rvert}
\newcommand{\sabs}[1]{\lvert#1\rvert}
\newcommand{\norm}[1]{\left\lVert#1\right\rVert}
\newcommand{\snorm}[1]{\lVert#1\rVert}
\newcommand{\paren}[1]{\left( #1 \right)}
\newcommand{\set}[1]{\left\{ #1 \right\}}
\newcommand{\wt}{\widetilde}
\newcommand{\wh}{\widehat}
\newcommand{\EE}{\mathbb{E}}
\newcommand{\FF}{\mathbb{F}}
\newcommand{\RR}{\mathbb{R}}
\newcommand{\PP}{\mathbb{P}}
\newcommand{\ZZ}{\mathbb{Z}}
\newcommand{\cP}{\mathcal{P}}
\newcommand{\cQ}{\mathcal{Q}}
\DeclareMathOperator{\codeg}{codeg}
\DeclareMathOperator{\ex}{ex}
\newcommand{\x}{\times}
\title{The regularity method for graphs with few 4-cycles}
\author{David Conlon}
\address{Conlon, Department of Mathematics, California Institute of Technology, Pasadena, CA, USA}
\email{dconlon@caltech.edu}
\thanks{Conlon is supported by NSF Award DMS-2054452 and in part by ERC Starting Grant 676632.}
\author{Jacob Fox}
\address{Fox, Department of Mathematics, Stanford University, Stanford, CA, USA}
\email{jacobfox@stanford.edu}
\thanks{Fox is supported by a Packard Fellowship and by NSF Award DMS-1855635.}
\author{Benny Sudakov}
\address{Sudakov, Department of Mathematics, ETH, Zurich, 8092, Switzerland}
\email{benjamin.sudakov@math.ethz.ch}
\thanks{Sudakov is supported in part by SNSF grant 200021\_196965.}
\author{Yufei Zhao}
\address{Zhao, Department of Mathematics, Massachusetts Institute of Technology, Cambridge, MA, USA}
\email{yufeiz@mit.edu}
\thanks{Zhao is supported by NSF Award DMS-1764176, the MIT Solomon Buchsbaum Fund, and a Sloan Research
Fellowship.}
\begin{document}

\begin{abstract}
	We develop a sparse graph regularity method that applies to graphs with few $4$-cycles,
		including new counting and removal lemmas for 5-cycles in such graphs.
	Some applications include: 
	\begin{itemize}
		\item Every $n$-vertex graph with no $5$-cycle can be made triangle-free by deleting $o(n^{3/2})$ edges.
		\item For $r \geq 3$, every $n$-vertex $r$-graph with girth greater than $5$ has $o(n^{3/2})$ edges.
		\item Every subset of $[n]$ without a nontrivial solution to the equation $x_1 + x_2 + 2x_3 = x_4 + 3x_5$ has size $o(\sqrt{n})$.
	\end{itemize}
\end{abstract}

\maketitle

\section{Introduction}

Szemer\'edi's regularity lemma~\cite{Sze78} is a rough structure theorem that applies to all graphs. The lemma originated in Szemer\'edi's proof of his celebrated theorem that dense sets of integers contain arbitrarily long arithmetic progressions~\cite{Sze75} and is now considered one of the most useful and important results in combinatorics. Among its many applications, one of the earliest was the influential triangle removal lemma of Ruzsa and Szemer\'edi \cite{RSz78}, which says that any $n$-vertex graph with $o(n^3)$ triangles can be made triangle-free by removing $o(n^2)$ edges. Surprisingly, this simple sounding statement is already sufficient to imply Roth's theorem, the special case of Szemer\'edi's theorem for $3$-term arithmetic progressions, and a generalization known as the corners theorem.

Most applications of the regularity lemma, including the triangle removal lemma, rely on also having an associated {\it counting lemma}. Such a lemma roughly says that the number of embeddings of a fixed graph $H$ into a pseudorandom graph $G$ can be estimated by pretending that $G$ is a random graph. This combined application of the regularity lemma and a counting lemma is often referred to as the {\it regularity method} and has had important applications in graph theory, combinatorial geometry, additive combinatorics, and theoretical computer science. For surveys on the regularity method and its applications, we refer the interested reader to \cite{CF13, KS96, RS10}.

The original version of the regularity lemma is only meaningful for dense graphs. However, many interesting and challenging combinatorial problems concern sparse graphs, so it would be extremely valuable to develop a regularity method that also applies to these graphs. The first step in this direction was already taken in the 1990's by Kohayakawa~\cite{Koh97} and R\"odl (see~\cite{GS05}), who proved an analogue of Szemer\'edi's regularity lemma for sparse graphs (see also~\cite{Sco11}). The problem of proving an associated sparse counting lemma has been a more serious challenge, but one that has seen substantial progress in recent years.

It is known that one needs to make some nontrivial assumptions about a sparse graph in order for a counting lemma to hold. Usually, that has meant that the graph is assumed to be a subgraph of another well-behaved sparse graph, such as a random or pseudorandom graph. For subgraphs of random graphs, proving such a counting lemma (or, more accurately in this context, embedding lemma) was a famous problem, known as the K\L R conjecture \cite{KLR97}, which has only been resolved very recently~\cite{BMS15,CGSS14,ST15} as part of the large body of important work (see also~\cite{CG16,Sch16}) extending classical combinatorial theorems such as Tur\'an's theorem and Szemer\'edi's theorem to subsets of random sets.

In the pseudorandom setting, the aim is again to prove analogues of combinatorial theorems, but now for subsets of pseudorandom sets. For instance, the celebrated Green--Tao theorem \cite{GT08}, that the primes contain arbitrarily long arithmetic progressions, may be viewed in these terms. Indeed, the main idea in their work is to prove an analogue of Szemer\'edi's theorem for dense subsets of pseudorandom sets and then to show that the primes form a dense subset of a pseudorandom set of ``almost primes'' so that their relative Szemer\'edi theorem can be applied. For graphs, a sparse counting lemma, which easily enables the transference of combinatorial theorems from dense to sparse graphs, was first developed in full generality by Conlon, Fox, and Zhao~\cite{CFZ14a} and then extended to hypergraphs in~\cite{CFZ15}, where it was used to give a simplified proof of a stronger relative Szemer\'edi theorem, valid under weaker pseudorandomness assumptions than in~\cite{GT08}.

In this paper, we develop the sparse regularity method in another direction, without any assumption that our graph is contained in a sufficiently pseudorandom host. Instead, our only assumption will be that the graph has few $4$-cycles and our main contribution will be a counting lemma that lower bounds the number of $5$-cycles in such graphs.\footnote{Longer cycles, as well as several other families of graphs, can also be counted using our techniques. We hope to return to this point in a future paper.} Unlike the previous results on sparse regularity, the method developed here has natural applications in extremal and additive combinatorics with few hypotheses about the setting. We begin by exploring these applications.

\vspace{2mm}

\textit{Asymptotic notation.} For positive functions $f$ and $g$ of $n$, we write $f = O(g)$ or $f \lesssim g$ to mean that $f \le C g$ for some constant $C > 0$; we write $f = \Omega(g)$ or $f \gtrsim g$ to mean that $f \ge c g$ for some constant $c > 0$; we write $f = o(g)$ to mean that $f/g \to 0$; and we write $f = \Theta(g)$ or $f \asymp g$ to mean that $g \lesssim f \lesssim g$.

\subsection{Sparse graph removal lemmas}

The famous triangle removal lemma of Ruzsa and Szemer\'edi~\cite{RSz78} states that:
\begin{quote}
	\emph{An $n$-vertex graph with $o(n^3)$ triangles 
	can be made triangle-free 
	by deleting $o(n^2)$ edges.
	}
\end{quote}

One of the main applications of our sparse regularity method is a removal lemma for $5$-cycles in $C_4$-free graphs. Since a $C_4$-free graph on $n$ vertices has $O(n^{3/2})$ edges, a removal lemma in such graphs is only meaningful if the conclusion is that we can remove $o(n^{3/2})$ edges to achieve our goal, in this case that the graph should also be $C_5$-free. We show that such a removal lemma holds if our $n$-vertex $C_4$-free graph has $o(n^{5/2})$ $C_5$'s.

\begin{theorem} \label{thm:C4-free-C5-removal}
	An $n$-vertex $C_4$-free graph with $o(n^{5/2})$ $C_5$'s can be made $C_5$-free by removing $o(n^{3/2})$ edges.
\end{theorem}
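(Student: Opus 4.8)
The plan is to run the usual regularity strategy---regularize, clean up, count---but in the sparse regime appropriate to $C_4$-free graphs, combining the sparse regularity lemma of Kohayakawa and R\"odl with the $C_5$-counting lemma for graphs with few $4$-cycles that forms the main contribution of this paper. We may assume $e(G) = \Omega(n^{3/2})$, since otherwise we can simply delete every edge of $G$. Set $p = n^{-1/2}$. The first point to check is that a $C_4$-free graph is ``upper-uniform'' at scale $p$, i.e.\ has no dense spots: for any $U,W$ with $|U|,|W| \ge \eta n$ we have $e(U,W) \le e(G) = O(n^{3/2}) = O(\eta^{-2})\cdot p|U||W|$, so the sparse regularity lemma applies to $G$ with density parameter $p$.

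Next I would fix a small $\epsilon > 0$ (to be chosen at the end in terms of the target error) and apply the sparse regularity lemma to obtain an equitable partition $V(G) = V_1 \cup \dots \cup V_M$, where $M$ is bounded in terms of $\epsilon$ and may be taken as large as we like, such that all but at most $\epsilon M^2$ of the pairs $(V_i,V_j)$ are $(\epsilon,p)$-regular. Then clean up $G$ by deleting every edge lying (i) inside a part, (ii) between an irregular pair, or (iii) between a pair whose density relative to $p$ is below $\epsilon$. By the K\H{o}v\'ari--S\'os--Tur\'an bound, each part and each pair of parts spans $O((n/M)^{3/2})$ edges, so (i) removes $O(n^{3/2}/\sqrt M)$ edges, which is negligible once $M$ is large, and (iii) removes at most $\epsilon M^2 \cdot p(n/M)^2 = \epsilon n^{3/2}$ edges. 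Bounding the contribution of (ii) is the one delicate point of the clean-up, since the crude estimate $\epsilon M^2 \cdot O((n/M)^{3/2})$ loses a factor of $\sqrt M$; making (ii) negligible requires using the $C_4$-free hypothesis more carefully than this per-pair bound. Granting this, the clean-up deletes $o(n^{3/2})$ edges in total; write $G'$ for the cleaned graph.

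Now suppose $G'$ still contains a copy of $C_5$, say on vertices $v_1,\dots,v_5$. Since $G'$ has no edges inside a part, mapping each $v_i$ to the index of its part produces a closed walk of length $5$ in the reduced graph $R$ on $[M]$, where $ij \in E(R)$ exactly when $(V_i,V_j)$ is $(\epsilon,p)$-regular with relative density at least $\epsilon$. A closed walk of the odd length $5$ forces $R$ to contain a triangle or a $C_5$, and in either case there is a homomorphism $C_5 \to R$ all of whose pairs are $(\epsilon,p)$-regular with relative density at least $\epsilon$. The associated blow-up is a subgraph of $G'$, hence of $G$, so applying the $C_5$-counting lemma for $C_4$-free graphs to this blow-up yields $\gtrsim c(\epsilon)\,p^5 n^5 = c(\epsilon)\,n^{5/2}$ copies of $C_5$ in $G$, for some constant $c(\epsilon) > 0$. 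This contradicts the assumption that $G$ has $o(n^{5/2})$ copies of $C_5$ once $n$ is large, so $G'$ must be $C_5$-free.

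Finally I would unwind the quantifiers in the standard fashion: given a target $\delta > 0$, choose $\epsilon$ (and hence $M$ and all constants) small enough that the clean-up deletes fewer than $\delta n^{3/2}$ edges, and set $\gamma = c(\epsilon)/2$; then every $n$-vertex $C_4$-free graph with at most $\gamma n^{5/2}$ copies of $C_5$ becomes $C_5$-free after deleting fewer than $\delta n^{3/2}$ edges for all large $n$, which is the assertion of the theorem. \textbf{The main obstacle is the counting step.} For sparse graphs a lower bound on the number of $5$-cycles cannot be read off from local regularity alone---some pseudorandomness is needed---and the crux of the paper is that having few $4$-cycles can substitute for that pseudorandomness when counting $5$-cycles. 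The remaining, more routine, difficulty is controlling the edges deleted across irregular pairs in the clean-up using only the hypothesis that $G$ is $C_4$-free.
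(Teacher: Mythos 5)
Your high-level strategy is the right one, and the counting step is indeed where the paper's main new content lives, but the architecture you chose (Kohayakawa--R\"odl strong sparse regularity, then a reduced-graph/supersaturation argument) differs from the paper's (a Frieze--Kannan-type \emph{weak} sparse regularity lemma in cut-norm form, followed by transference to the \emph{dense} removal lemma applied to the averaged function $f_{\mathcal P}$), and your version has genuine gaps at exactly the points where the paper's choices do real work. The first gap is your upper-uniformity check. You verify $e(U,W) \le O(\eta^{-2})\, p|U||W|$ for $|U|,|W| \ge \eta n$, but the Kohayakawa--R\"odl lemma fixes the upper-uniformity constant $D$ \emph{before} producing $\eta$, so a bound of the form $D = O(\eta^{-2})$ is not admissible. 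Worse, the needed statement is simply false: a disjoint union of a polarity/incidence graph on $\eta n$ vertices with one on $(1-\eta)n$ vertices is $C_4$-free, yet the small component has density $\asymp \eta^{-1/2} p$ on its own vertex set, so no bounded $D$ works. The missing idea is that a bipartite graph that is much denser than $p$ between its parts must contain many $C_4$'s, so that a graph with few $C_4$'s has a negligible number of edges inside over-dense pairs; this is the paper's Lemmas 2.2--2.3, and it is what lets them use a Scott-style regularity lemma (with a cutoff $1_{f_{\mathcal P}\le 1}$ discarding dense spots) with no upper-uniformity hypothesis at all.

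The second gap is the one you flag yourself: bounding the edges in irregular pairs. This is fixable --- for each $i$ let $W_i$ be the union of the parts forming irregular pairs with $V_i$ and apply K\H{o}v\'ari--S\'os--Tur\'an to $(V_i, W_i)$; summing with H\"older gives $O(\sqrt{\epsilon}\, n^{3/2})$ --- but you leave it open, and the paper's use of weak (cut-norm) regularity avoids the issue entirely, since there are no irregular pairs to delete: the single global cut-norm error feeds directly into the hypotheses of the $C_5$-counting lemma. A third, smaller, issue is that the counting lemma lower-bounds \emph{homomorphism} counts, so when your closed $5$-walk in $R$ lands on a triangle of parts you must subtract the degenerate closed $5$-walks of $G$ that pass through a triangle; in a $C_4$-free graph these number $O(\sum_v \deg(v)^2) = O(n^2) = o(n^{5/2})$, so this is harmless, but it needs to be said (the paper handles it by splitting each part into five pieces and counting only rainbow $5$-cycles). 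Finally, note that your endgame (a surviving $C_5$ forces $\gtrsim c(\epsilon)n^{5/2}$ copies) inlines a proof of the removal lemma, whereas the paper instead runs the counting lemma in the other direction --- few sparse $C_5$'s implies $t(C_5, \widetilde g)$ is small --- and then invokes the dense weighted removal lemma as a black box, transferring the deleted set back; both endgames are sound once the regularity and clean-up steps are repaired.
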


This theorem is a special case of the following more general result.

\begin{theorem}[Sparse $C_3$--$C_5$ removal lemma]
	 \label{thm:345-removal}
	 An $n$-vertex graph with 
	$o(n^2)$ $C_4$'s
	and
	$o(n^{5/2})$ $C_5$'s
	can be made $\{C_3,C_5\}$-free
	by deleting $o(n^{3/2})$ edges.
\end{theorem}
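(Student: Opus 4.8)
The plan is to run the usual triangle-removal-lemma template, but with a sparse regularity lemma and a $C_5$-counting lemma that are valid for graphs with few $4$-cycles; building those two tools is really the heart of the matter, so this proposal is mostly the bookkeeping around them. Let $G$ be the given $n$-vertex graph. If $e(G)=o(n^{3/2})$ we are done by deleting every edge, so assume $e(G)=\Theta(n^{3/2})$ and let $p\asymp n^{-1/2}$ be the edge density of $G$; this is the scale at which a random graph of density $p$ has $\asymp n^{2}$ copies of $C_4$, $\asymp n^{5/2}$ copies of $C_5$ and $\asymp n^{3/2}$ edges, so the hypotheses that $G$ has $o(n^2)$ copies of $C_4$ and $o(n^{5/2})$ copies of $C_5$ say exactly that $G$ has $o(1)$ times the random count of $4$- and $5$-cycles. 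The first thing to extract from $o(n^2)$ copies of $C_4$ is, via the standard convexity (K\H{o}v\'ari--S\'os--Tur\'an) bound, that $G$ and all of its subgraphs are ``spread out'': any vertex sets $U,W$ with $|U|,|W|\ge\gamma n$ satisfy $e(U,W)=O_\gamma(p\,|U|\,|W|)$. This no-dense-spots condition is exactly what makes a sparse regularity lemma meaningful here.

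Next, apply the sparse regularity lemma to $G$ at scale $p$: partition $V(G)$ into a bounded number $k=k(\epsilon)$ of nearly equal parts $V_1,\dots,V_k$ so that all but $\le\epsilon k^2$ of the pairs $(V_i,V_j)$ are $\epsilon$-regular. Form $G'$ from $G$ by deleting every edge that lies inside a part, in an irregular pair, or in a pair of density $<\eta p$. Using the spread-out property, each of these three families of edges can be made to have at most $\tfrac{\delta}{3}n^{3/2}$ edges by taking $k$ large, $\epsilon,\eta$ small and $n$ large, so $G'$ is obtained from $G$ by deleting $o(n^{3/2})$ edges. Let $R$ be the reduced graph on vertex set $[k]$ in which $ij\in R$ iff $(V_i,V_j)$ is $\epsilon$-regular of density $\ge\eta p$; every edge of $G'$ joins $V_i$ to $V_j$ for some $ij\in R$.

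It remains to show $G'$ is $\{C_3,C_5\}$-free, and this is where the counting lemma enters. Suppose $G'$ contains a copy of $C_3$ or $C_5$. Since within-part edges were deleted, a triangle of $G'$ has its vertices in three distinct parts $i,j,l$ with $ij,jl,li\in R$, and a $C_5$ of $G'$ has its vertices in parts $i_1,\dots,i_5$ with all consecutive pairs in $R$; in either case $R$ contains a triangle or a $C_5$, hence a closed walk $w=(w_1,\dots,w_5,w_1)$ of length exactly $5$ using only edges of $R$ --- for a triangle $ijl$, take $i\to j\to l\to i\to j\to i$. Now invoke the $C_5$-counting lemma for few-$C_4$ graphs with the five sets $V_{w_1},\dots,V_{w_5}$: since $G$ has $o(n^2)$ copies of $C_4$ and each consecutive pair $(V_{w_t},V_{w_{t+1}})$ (indices cyclic) is $\epsilon$-regular of density $\ge\eta p$, the lemma yields at least $\tfrac12 c(\eta)\,p^{5}\prod_{t=1}^{5}|V_{w_t}|$ copies of $C_5$ in $G$ once $\epsilon$ is small and $n$ large. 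As $\prod_t|V_{w_t}|=(n/k)^5$ and $p\asymp n^{-1/2}$, this is a positive constant (depending only on $k$ and $\eta$) times $n^{5/2}$, contradicting the assumption that $G$ has $o(n^{5/2})$ copies of $C_5$. The one subtlety is that when $w$ repeats a part we need the counting lemma to produce genuine vertex-distinct $C_5$'s: the non-injective homomorphisms are degenerate ``triangle-with-pendant'' configurations, and the few-$C_4$ hypothesis is precisely what bounds their number below the main term. Hence $G'$ is $\{C_3,C_5\}$-free, as required.

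The main obstacle is clearly the $C_5$-counting lemma: one must bound the number of $5$-cycles from below in the sparse regime under only the hypothesis of few $4$-cycles --- with no pseudorandom host graph to lean on --- and with an error term strong enough to absorb the degenerate contributions that appear when a reduced triangle is padded out to a closed walk of length $5$. Together with the companion sparse regularity lemma, this is exactly the ``regularity method for graphs with few $4$-cycles'' that the paper develops; everything else above is the standard clean-up-and-count argument behind the triangle removal lemma.
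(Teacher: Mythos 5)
Your overall architecture is genuinely different from the paper's. You run the classical contradiction-style removal argument: sparse Szemer\'edi-type regularity, clean up irregular/sparse/internal pairs, and then argue that any surviving $C_3$ or $C_5$ would force $\gtrsim n^{5/2}$ copies of $C_5$ via the counting lemma. The paper instead uses the Frieze--Kannan \emph{weak} regularity lemma (\cref{thm:reg-fn}), where there are no irregular pairs to discard; it applies the counting lemma (\cref{thm:sparse-c5-count}) in the opposite logical direction to conclude that the averaged bounded function $\wt g$ has small $C_5$-density, invokes the \emph{dense} weighted removal lemma (\cref{thm:rem-dense}) as a black box to find which pairs of parts to delete, and pulls that deletion back to $G$ (\cref{prop:dpc-removal}). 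Your route, if it closed, would avoid the dense removal lemma entirely, which is an attractive simplification; the price is that you need a pair-by-pair notion of regularity and density in the sparse setting, and you need the counting-lemma conclusion to survive being applied to a closed $5$-walk in the reduced graph rather than to five genuinely distinct parts. One minor caution on the cleaning step: the bound on edges inside irregular pairs cannot be done pair by pair, since the spread-out constant you extract from $o(n^2)$ $C_4$'s for sets of size $n/k$ degrades with $k$; the paper's \cref{lem:c4-dense-pairs} bounds the \emph{total} over all dense pairs at once via a H\"older summation, and you would need the same device (splitting irregular pairs into dense and non-dense ones).

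The genuine gap is the degenerate-homomorphism step, which you flag but do not resolve, and whose proposed resolution is not justified. When the reduced walk repeats parts --- which happens both when you pad a reduced triangle $ijl$ to $(i,j,l,i,j)$ and when a surviving $C_5$ of $G'$ places non-consecutive vertices in the same part --- the counting lemma produces closed $5$-walks, and the non-injective ones are triangles traversed with a pendant edge. You claim ``the few-$C_4$ hypothesis is precisely what bounds their number below the main term,'' but a triangle with a pendant contains no $C_4$ and no $C_5$, so neither hypothesis controls it directly. The natural bound on the number of such configurations is on the order of $\Delta(G)$ times the number of triangles; under the hypotheses one gets $e(G)=O(n^{3/2})$ and (after some work with codegrees) a triangle count that can be as large as $n^{7/4-o(1)}$, while $\Delta(G)$ can be close to $n$, so the crude product is far above $n^{5/2}$. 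Whether a cleverer argument salvages an $o(n^{5/2})$ bound is unclear, but it is certainly not automatic, and this is exactly the point at which the paper does something different: it splits each part of the partition into five subparts labelled by $\ZZ/5\ZZ$ and applies the counting lemma only to ``rainbow'' copies whose $r$-th vertex lies in an $r$-labelled subpart, so that every homomorphic copy counted is automatically a vertex-distinct $C_5$ and no degenerate contribution ever needs to be estimated. Your proof needs either that trick or an actual proof of your claimed bound; as written, the contradiction in the triangle case does not go through.
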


\begin{remark}
Let us motivate the exponents that appear in this theorem. 
It is helpful to compare the quantities with what is expected in an $n$-vertex random graph with edge density $p$. 
Provided that $pn \to \infty$, the number of $C_k$'s in $G(n,p)$ is typically on the order of $p^k n^k$ for each fixed $k$. Moreover, if $p \gtrsim n^{-1/2}$, then a second-moment calculation shows that $G(n,p)$ typically contains on the order of $p n^2 $ edge-disjoint triangles and so cannot be made triangle-free by removing $o(p n^2)$ edges. Hence, the random graph $G(n,p)$ with $p \asymp n^{-1/2}$ shows that \cref{thm:345-removal} becomes false if we only assume that there are $O(n^{5/2})$ $C_5$'s and $O(n^2)$ $C_4$'s. 

For a different example, we note that the polarity graph of Brown~\cite{Bro66} and Erd\H{o}s--R\'enyi--S\'os~\cite{ERS66} 
has $n$ vertices, $\Theta(n^{3/2})$ edges, no $C_4$'s, 
$\Theta(n^{5/2})$ $C_5$'s, and every edge is contained in exactly one triangle (see~\cite{LV03} for the proof of this latter property). Thus, \cref{thm:345-removal} is false if we relax the hypothesis on the number of $C_5$'s from $o(n^{5/2})$ to $O(n^{5/2})$. For another example, showing that the hypothesis on the number of $C_4$'s also cannot be entirely dropped, we refer the reader to \cref{prop:construction-few-C5} below.
\end{remark}

We state two additional corollaries of \cref{thm:345-removal}, the second being an immediate consequence of the first. See \cref{sec:pf-rem-cor} for the short deductions.

\begin{corollary} \label{cor:o-C5}
	An $n$-vertex graph with 
	$o(n^2)$ $C_5$'s
	can be made triangle-free by deleting $o(n^{3/2})$ edges.
\end{corollary}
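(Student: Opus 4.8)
The plan is to control a maximal edge-disjoint triangle packing and to feed it — rather than $G$ itself — into the removal lemma. Fix a maximal family $\mathcal{T}$ of pairwise edge-disjoint triangles in $G$, and let $H$ be the spanning subgraph of $G$ whose edge set is $\bigcup_{T\in\mathcal{T}} E(T)$, so that $|E(H)| = 3|\mathcal{T}|$. By maximality every triangle of $G$ shares an edge with $H$, hence $G - E(H)$ is triangle-free, and it suffices to show $|\mathcal{T}| = o(n^{3/2})$. The obstruction to applying \cref{thm:345-removal} to $G$ directly is that $G$ may have $\Omega(n^2)$ — indeed up to $\Omega(n^4)$ — copies of $C_4$. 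The gain from passing to $H$ is that a union of edge-disjoint triangles automatically has very few $4$-cycles.

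The key step is the bound: the number of $4$-cycles in $H$ is at most five times the number of $5$-cycles in $H$. Since $\mathcal{T}$ is edge-disjoint, each edge $e$ of $H$ lies in a unique member of $\mathcal{T}$, whose third vertex call $p(e)$. Given a $4$-cycle $abcda$ in $H$, I claim one of its edges, say $ab$, satisfies $p(ab) \notin \{c,d\}$, and then $p(ab)\,a\,d\,c\,b$ (closing up to $p(ab)$) is a genuine $5$-cycle in $H$. Indeed, if $p(ab) = c$, then $\{a,b,c\} \in \mathcal{T}$ already uses the edges $ab$, $bc$, $ac$, so the member of $\mathcal{T}$ containing $cd$ cannot have third vertex $a$ or $b$ — otherwise two members of $\mathcal{T}$ would share $ac$ or $bc$ — whence $p(cd) \notin \{a,b\}$ and $cd$ is a good edge; the case $p(ab) = d$ is symmetric and makes the edge $bc$ good, via the edges $ab$, $ad$, $bd$. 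So the four edges of a $4$-cycle of $H$ cannot all fail. Finally, a fixed $5$-cycle is produced this way by at most five $4$-cycles, since choosing which of its five vertices plays the role of $p(ab)$ determines the edge $ab$ and then the entire $4$-cycle.

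The remainder is bookkeeping. As $H \subseteq G$, the number of copies of $C_5$ in $H$ is $o(n^2)$, so by the key step the number of copies of $C_4$ in $H$ is also $o(n^2)$; since $o(n^2) = o(n^{5/2})$, both hypotheses of \cref{thm:345-removal} hold for the $n$-vertex graph $H$, giving a set $D$ of $o(n^{3/2})$ edges of $H$ meeting every triangle of $H$. The triangles of $\mathcal{T}$ are edge-disjoint, lie in $H$, and each contains an edge of $D$, so $|\mathcal{T}| \le |D| = o(n^{3/2})$, hence $|E(H)| = 3|\mathcal{T}| = o(n^{3/2})$, and deleting $E(H)$ from $G$ yields a triangle-free graph. (Throughout one should unpack the $o(\cdot)$'s into $\epsilon$-$\eta$ statements; the constants compose without difficulty.)

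I expect the only genuinely nontrivial ingredient to be the inequality in the second paragraph — that a union of edge-disjoint triangles has its $C_4$-count bounded by a constant times its $C_5$-count — and in particular the verification that the four edges of such a $4$-cycle cannot all be ``bad,'' which is exactly the place where the edge-disjointness of $\mathcal{T}$ is used. Everything else (that a maximal edge-disjoint triangle packing is simultaneously a triangle edge-cover, and the translation between $o(\cdot)$ and quantifiers) is routine.
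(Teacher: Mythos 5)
Your proposal is correct and follows essentially the same route as the paper: pass to the subgraph spanned by a maximal edge-disjoint triangle packing, observe that in such a graph every $C_4$ extends to a house (so the $C_4$-count is at most five times the $C_5$-count, hence $o(n^2)$), and then apply \cref{thm:345-removal}. Your second paragraph is simply a careful write-out of the paper's one-line claim that every $C_4$ in an edge-disjoint union of triangles extends to a house, with each $C_5$ arising from at most five $C_4$'s.
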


\begin{corollary} \label{cor:C5-free}
	An $n$-vertex $C_5$-free graph 
	can be made triangle-free by deleting $o(n^{3/2})$ edges. 
\end{corollary}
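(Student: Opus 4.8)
The plan is to deduce \cref{cor:C5-free} directly from \cref{cor:o-C5}, with essentially no additional work. Unwinding the asymptotic notation, \cref{cor:o-C5} asserts that for every $\varepsilon > 0$ there exist $\delta > 0$ and $n_0$ such that every graph on $n \ge n_0$ vertices containing at most $\delta n^2$ copies of $C_5$ can be made triangle-free by deleting at most $\varepsilon n^{3/2}$ edges. Given $\varepsilon > 0$, I would fix such $\delta$ and $n_0$, and then observe that a $C_5$-free graph on $n$ vertices contains exactly $0 \le \delta n^2$ copies of $C_5$, hence vacuously satisfies the hypothesis of \cref{cor:o-C5} for every $n \ge n_0$; it can therefore be made triangle-free by removing at most $\varepsilon n^{3/2}$ edges. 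Since $\varepsilon$ was arbitrary, this is precisely the assertion of \cref{cor:C5-free}.

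There is no real obstacle in this step: all of the difficulty has been absorbed into \cref{cor:o-C5}, which in turn rests on the sparse $C_3$--$C_5$ removal lemma (\cref{thm:345-removal}) and the regularity method for graphs with few $4$-cycles developed in the body of the paper. The only thing to be careful about is the bookkeeping just described, namely that the $o(n^2)$ hypothesis is met trivially by a $C_5$-free graph and that the quantifiers on $\varepsilon$, $\delta$, and $n_0$ line up correctly.

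For comparison, one might ask whether \cref{cor:C5-free} could instead be obtained by applying the removal lemma \cref{thm:345-removal} directly, since a $C_5$-free graph trivially has $o(n^{5/2})$ copies of $C_5$. This does not work as stated, because \cref{thm:345-removal} additionally requires $o(n^2)$ copies of $C_4$, and a $C_5$-free graph need not have few $4$-cycles: the complete bipartite graph $K_{\lceil n/2 \rceil, \lfloor n/2 \rfloor}$ is $C_5$-free (indeed triangle-free) yet has $\Theta(n^4)$ copies of $C_4$. Handling such graphs requires first discarding or separately treating the part responsible for the $4$-cycles, which is exactly the extra argument underlying \cref{cor:o-C5}; routing through \cref{cor:o-C5} is therefore the cleanest path.
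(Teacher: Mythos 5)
Your deduction is correct and is exactly the paper's route: the paper states that \cref{cor:C5-free} is an immediate consequence of \cref{cor:o-C5}, since a $C_5$-free graph trivially has $0 = o(n^2)$ copies of $C_5$. Your remark about why \cref{thm:345-removal} cannot be applied directly (the $C_4$ hypothesis) also correctly identifies the issue that the paper's proof of \cref{cor:o-C5} resolves by passing to the union of edge-disjoint triangles.
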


We do not know if the exponent $3/2$ in \cref{cor:C5-free} is best possible, but the next statement, whose proof can be found in \cref{sec:constructions}, shows that the hypothesis on the number of $C_5$'s in \cref{cor:o-C5} cannot be relaxed from $o(n^2)$ to $o(n^{5/2})$. 

\begin{proposition} \label{prop:construction-few-C5}
	There exist $n$-vertex graphs with $o(n^{2.442})$ $C_5$'s that cannot be made triangle-free by deleting $o(n^{3/2})$ edges.
\end{proposition}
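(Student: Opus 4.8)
The plan is to exhibit, for infinitely many $n$, a graph $G$ on $n$ vertices that simultaneously (i) contains $\Omega(n^{3/2})$ pairwise edge-disjoint triangles, so that every triangle-free subgraph of $G$ omits $\Omega(n^{3/2})$ edges, and (ii) contains only $O(n^{2.442})$ copies of $C_5$. Property (i) on its own is already achieved by the polarity graph discussed in the remark after \cref{thm:345-removal} --- its $\Theta(n^{3/2})$ edges decompose into $\Theta(n^{3/2})$ edge-disjoint triangles --- but that graph has $\Theta(n^{5/2})$ copies of $C_5$, so the whole difficulty is to keep $\Omega(n^{3/2})$ edge-disjoint triangles while pushing the number of $C_5$'s strictly below $n^{5/2}$. (One cannot push it below $\Theta(n^2)$: by \cref{cor:o-C5} a graph with $o(n^2)$ copies of $C_5$ becomes triangle-free after deleting $o(n^{3/2})$ edges.)

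First I would take $G$ to be the shadow of a partial Steiner triple system $\mathcal{H}$ on $[n]$ --- a family of triples pairwise meeting in at most one vertex, with $G$ the union of the corresponding triangles. The triples are then $|\mathcal{H}|$ pairwise edge-disjoint triangles, so (i) holds once $|\mathcal{H}| = \Omega(n^{3/2})$; this forces $G$ to be sparse, with $\Theta(n^{3/2})$ edges. The key reduction is that a copy of $C_5$ in $G$ cannot arise for free: following the (at most five) triples that contain the five edges of a $C_5$, and resolving the cases where two of these triples coincide, one finds that every $C_5$ of $G$ forces a Berge cycle of length $3$, $4$, or $5$ in $\mathcal{H}$. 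So, up to lower-order terms, bounding the number of $C_5$'s in $G$ amounts to bounding the number of short Berge cycles of $\mathcal{H}$, and it suffices to build a partial Steiner system with $\Omega(n^{3/2})$ triples and only $O(n^{2.442})$ short Berge cycles.

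A uniformly random such system has $\Theta(n^{5/2})$ short Berge cycles --- the same order as the polarity graph --- so the construction has to be structured. The natural attempt is an algebraic family, for instance triples $\{x,\ x+a,\ x+b : x \in \mathbb{Z}_n,\ (a,b)\in D\}$ for a sparse base set $D$ with $|D|\asymp \sqrt{n}$: linearity of $\mathcal{H}$ then becomes a Sidon-type condition on $D$, and the number of short Berge cycles is governed by the number of solutions of a few fixed linear equations in $D$ over $\mathbb{Z}_n$, i.e.\ by additive-energy counts. Taking $D$ to be a Sidon set kills the lower-order energies (in particular it contains no $3$-term arithmetic progression), but a residual count of $5$-term relations $\sum_i \varepsilon_i a_i = 0$ with $a_i \in D$ survives; the exponent $2.442$, and the fact that it lies strictly between $2$ and $5/2$, should emerge from optimizing the size of $D$ and the set of admissible offset pairs against the best available bound for this residual count.

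With such an $\mathcal{H}$ in hand, (i) is immediate, and (ii) is a finite case analysis: classify how the five edges of a $C_5$ distribute among the triples, reduce each case to a bounded system of linear equations over $\mathbb{Z}_n$, and estimate its number of solutions. I expect the main obstacle to be precisely this last point --- producing a partial Steiner system with $\Omega(n^{3/2})$ triples whose number of short Berge cycles is genuinely $o(n^{5/2})$ (let alone $O(n^{2.442})$). Every ``generic'' construction, random or polarity-type, stalls at $n^{5/2}$, so some real additive-combinatorial input is needed to break that barrier, and the quality of that input is what controls how close the $C_5$-exponent can be pushed toward $2$.
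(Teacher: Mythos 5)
Your reduction is fine as far as it goes: a graph that is an edge-disjoint union of $\Omega(n^{3/2})$ triangles indeed cannot be made triangle-free by deleting $o(n^{3/2})$ edges, so it suffices to exhibit such a graph with $O(n^{2.442})$ copies of $C_5$. But the proposal never produces that graph. The entire difficulty is concentrated in the step you yourself flag as the ``main obstacle'': constructing a linear triple system with $\Omega(n^{3/2})$ triples and $o(n^{5/2})$ (let alone $O(n^{2.442})$) short Berge cycles. The $\ZZ_n$-translate scheme with a Sidon-type base set $D$ is only a hope --- you give no candidate $D$, no bound on the residual $5$-term relation count, and no derivation of the exponent $2.442$, which you treat as the output of an unspecified optimization. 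As written, this is a plan for a proof rather than a proof, and the gap sits exactly at its load-bearing point.

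The paper closes this gap with an idea orthogonal to your ``additive-combinatorial input is needed'' heuristic, and it is worth seeing why your claim that random constructions stall at $n^{5/2}$ is misleading. Take $G$ to be the $m$-fold tensor power of a triangle: $n=3^m$ vertices labeled by $\FF_3^m$, adjacent iff they differ in every coordinate. Every edge lies in a unique triangle, there are $6^{m-1}\asymp n^{1.63}$ triangles, and the spectral identity $\hom(C_k,C_3)=2^k+2(-1)^k$ gives $\hom(C_5,G)=30^m\asymp n^{3.1}$ and $\hom(C_4,G)=18^m$. Now keep each \emph{triangle} independently with probability $p=(\sqrt3/2)^m$, so $\Theta(p\,6^m)=\Theta(n^{3/2})$ edge-disjoint triangles survive. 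Every $C_5$ meets at least four triangles (at least five unless it extends to a house, and the houses are controlled by the $C_4$ count), so the expected number of surviving $C_5$'s is $O(p^5 30^m + p^4 18^m)=O((3^{7/2}\cdot 5/16)^m)=o(n^{2.442})$; the exponent is just $\log_3(3^{7/2}\cdot 5/16)$. The randomness is applied not to a generic Steiner system on $[n]$ but to the triangles of a structured base graph whose $C_5$-homomorphism count is already far below $n^5$; that is the missing idea. (Your parenthetical lower bound via \cref{cor:o-C5}, that one cannot get below $\Theta(n^2)$ five-cycles, is a correct and nice observation.)
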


We also state a $5$-partite version of the sparse $5$-cycle removal lemma. This statement will be used in our arithmetic applications.

\begin{theorem}[Sparse removal lemma for 5-cycles in 5-partite graphs] \label{thm:5-cycle-rem}
	For every $\epsilon > 0$, 
	there exists $\delta > 0 $ such that 
	if $G$ is a $5$-partite graph on vertex sets $V_1, \dots, V_5$ with $|V_1| = \cdots = |V_5| = n$, 
	all edges of $G$ lie between $V_i$ and $V_{i+1}$ for some $i$ (taken mod $5$), 
	and
	\begin{enumerate}
		\item [(a)] (Few $4$-cycles between two parts) $G$ has at most $\delta n^2$ copies of $C_4$ whose vertices lie in two different parts $V_i$,
		\item [(b)] (Few 5-cycles) $G$ has at most $\delta n^{5/2}$ copies of $C_5$, 
	\end{enumerate}
	then $G$ can be made $C_5$-free by removing at most $\epsilon n^{3/2}$ edges.
\end{theorem}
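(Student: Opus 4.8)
The plan is to run the sparse regularity method: apply a sparse regularity lemma to the given partition, delete the few problematic edges, and then use a counting lemma for $5$-cycles to turn one surviving copy of $C_5$ into many. The only genuinely hard ingredient is that counting lemma, and the hypothesis on $4$-cycles is exactly what makes it work.

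Fix $\epsilon>0$ and choose auxiliary constants in the order $\epsilon \gg \eta \gg 1/M \gg \delta$, where $\eta$ is a density threshold and $M$ the number of parts produced by the regularity lemma. Two preliminaries. First, applying hypothesis (a) to each of the five pairs $(V_i,V_{i+1})$ together with a K\H{o}v\'ari--S\'os--Tur\'an-type convexity estimate shows that $G$ has only $O(\delta^{1/4}n^{3/2})$ edges between each consecutive pair, so $e(G)=O(n^{3/2})$. Second, few $4$-cycles forbids dense spots: if $S\subseteq V_i$ and $T\subseteq V_{i+1}$ each have size $\Omega(n)$, then the bipartite graph between them contains $\gtrsim \rho^4|S|^2|T|^2$ copies of $C_4$, where $\rho=e(S,T)/(|S||T|)$, so hypothesis (a) forces $\rho=O(\delta^{1/4}n^{-1/2})$. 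This upper-uniformity at scale $p_0:=n^{-1/2}$ is precisely what is needed to apply the Kohayakawa--R\"odl sparse regularity lemma. Applying it, refine each $V_i$ into $M$ nearly equal pieces $V_i^1,\dots,V_i^M$ so that all but an $\epsilon$-fraction of the pairs $(V_i^a,V_{i+1}^b)$ are $\epsilon$-regular at scale $p_0$, and then delete every edge lying in an irregular pair or in a pair of density less than $\eta p_0$. Since every pair spans at most $O(p_0(n/M)^2)$ edges by the dense-spot bound, the number of edges deleted is $O((\epsilon+\eta)n^{3/2})\le\epsilon n^{3/2}$ after adjusting constants.

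Let $G'$ be the cleaned graph. If $G'$ is $C_5$-free we are done, so suppose it contains a copy of $C_5$; this copy necessarily has one vertex in each $V_i$ (a $C_5$ must wind once around the underlying $5$-cycle), and uses five pairs $(V_i^{a_i},V_{i+1}^{a_{i+1}})$ that are all $\epsilon$-regular at scale $p_0$, all of density at least $\eta p_0$, and each with at most $\delta n^2$ copies of $C_4$ (inherited from $G$). It now suffices to prove a counting lemma: five such ``$C_5$-blowup'' pairs, on parts of size $m\asymp n/M$, contain at least $c(\eta,M)\,n^{5/2}$ copies of $C_5$. Granting this we obtain $\gtrsim_{\eta,M} n^{5/2}$ copies of $C_5$ in $G'\subseteq G$, contradicting hypothesis (b) once $\delta\ll c(\eta,M)$, so $G'$ is $C_5$-free and the proof is complete.

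To prove the counting lemma I would write the number of $C_5$'s as $\sum_{v_1\in V_1,\,v_3\in V_3}P(v_1,v_3)\,Q(v_1,v_3)$, where $P(v_1,v_3)$ counts paths $v_1v_2v_3$ through $V_2$ and $Q(v_1,v_3)$ counts paths $v_3v_4v_5v_1$ through $V_4$ and $V_5$ (there are no degenerate terms, since the five parts are distinct), and then try to bound this inner product below by a constant times its ``random'' value $\big(\prod_i d_i\big)m^5\asymp n^{5/2}$. Regularity controls the first moments $\sum P$ and $\sum Q$, and the role of the $C_4$ hypothesis is to also control the second moments $\sum P^2$ and $\sum Q^2$ — large codegrees along the paths would create too many $C_4$'s — so that $P$ and $Q$ are spread out, after which a Cauchy--Schwarz/anti-concentration argument should prevent $P$ and $Q$ from anti-correlating badly enough to make the sum small. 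I expect this last step to be the main obstacle and the technical heart of the paper: extracting a clean \emph{lower} bound for $\sum PQ$ from $L^1$ and $L^2$ control alone is delicate — even in the random model $\sum P^2$ exceeds $(\sum P)^2/m^2$ by a constant factor, since the number of common neighbours is Poisson-like with mean $\asymp 1$ — and it seems to require genuinely more than regularity, plausibly a densification step in which contracting $V_2$ and $V_4$ recasts the problem as counting weighted triangles on $V_1,V_3,V_5$ with two of the three edge-weights now of average order $1$, where a dense-type counting argument can be applied.
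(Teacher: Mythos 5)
Your reduction is sound in outline, and it is a genuinely different route from the paper's: you run the classical ``clean up, then supersaturate'' removal argument (one surviving $C_5$ spans five regular dense pairs, a counting lemma produces $\gtrsim_{\eta,M} n^{5/2}$ copies, contradiction with (b)), whereas the paper runs a transference argument: it applies a sparse \emph{weak} (Frieze--Kannan) regularity lemma (\cref{thm:reg-fn}), uses the counting lemma in the one-sided form $t(C_5,f)\ge t(C_5,g)-O(C\epsilon)$ to conclude that the \emph{dense model} $\wt g$ has few $C_5$'s, invokes the dense removal lemma (\cref{thm:rem-dense}) on $\wt g$ to find a union $A$ of pairs of parts to delete, and observes that $C_5$-freeness of $\wt g 1_A$ forces $C_5$-freeness of $\wt f 1_A$ (see \cref{prop:dpc-removal}). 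The transference route is what lets the paper get away with only cut-norm (weak regularity) control; your route needs full Szemer\'edi-type sparse regularity plus an embedding-type lemma. Your preliminary steps are fine: the edge-count and no-dense-spots bounds do follow from (a) via convexity (this is the paper's \cref{lem:c4-lower,lem:c4-dense-pairs}), and a $C_5$ in the cleaned graph does wind once around the parts since a closed walk of length $5$ in the pattern $5$-cycle must have all steps in the same direction.

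The genuine gap is the counting lemma itself, which you correctly identify as the heart of the matter but do not prove; it is the main contribution of the paper (\cref{thm:sparse-c5-count}), and your closing paragraph explicitly concedes that you do not see how to finish it. Your instinct to densify by contracting $V_2$ and $V_4$ is exactly right --- the paper forms the path-count functions $f_{32}\circ f_{21}$ and $f_{34}\circ f_{45}$ and reduces to counting weighted triangles on $V_1\times V_3\times V_5$ --- but the missing idea is how to make these composed functions usable: they are unbounded, and as you observe, $L^1$ and $L^2$ moment control alone does not yield a lower bound on $\sum PQ$. The paper's resolution is a two-stage \emph{truncation}. First, zero out $f_{ij}$ on the $O(\epsilon^2)$-measure set of vertices of abnormally high normalized degree, so that \cref{lem:compose-cut} shows the composed function is still close in cut norm to its dense counterpart $g_{ij}\circ g_{jk}$. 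Second, cap the composed function at a threshold $A$; the $L^1$ cost of the cap is at most $A^{-1}\norm{f_{ij}\circ f_{jk}}_2^2\le CA^{-1}$ by Markov, which is exactly where the $C_4$ hypothesis enters. One is then left with bounded functions close in cut norm to $g_{ij}\circ g_{jk}$, and three applications of the dense triangle counting lemma (\cref{lem:dense-triangle}) finish the proof --- no anti-concentration or second-moment matching of $P$ against $Q$ is ever needed. A secondary, smaller omission: the $C_4$ hypothesis (a) only controls $4$-cycles within a single pair $(V_i,V_{i+1})$, while the quantity $\norm{f_{i-1,i}\circ f_{i,i+1}}_2^2$ counts $4$-cycles spanning three parts; the Cauchy--Schwarz step bounding the latter by the former (carried out in the paper's proof of \cref{thm:5-cycle-rem}) would also need to appear in your argument.
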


We note that the exponent $3/2$ in the conclusion above is tight, as shown by the next statement, 
    whose proof can be found in \cref{sec:constructions}.

\begin{proposition} \label{prop:C5-construction}
	For every $n$, there exists a 5-partite graph on vertex sets $V_1, \dots, V_5$ with $|V_1| = \cdots = |V_5| = n$, where all edges lie between $V_i$ and $V_{i+1}$ for some $i$ (taken mod $5$), such that the graph is $C_4$-free, every edge lies in exactly one 5-cycle, and there are $e^{-O(\sqrt{\log n})}n^{3/2}$ edges.
\end{proposition}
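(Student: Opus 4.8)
The plan is to realise $G$ by a translation construction over a cyclic group and then reduce to a purely additive problem. Fix nonzero integers $c_1,\dots,c_5$ with $c_1+\dots+c_5=0$ (needed for $5$-cycles to exist at all), with no two cyclically consecutive ones equal, and with $c_2,\dots,c_5$ all negative and $c_1=\abs{c_2}+\dots+\abs{c_5}$, so that the relation $\sum_i c_ix_i=0$ has one side equal to a single variable — for instance $(c_1,\dots,c_5)=(7,-1,-2,-3,-1)$ has all of these properties. Take $m\asymp n$ a large prime coprime to each $c_i$, let $B\subseteq\ZZ_m$ be a set to be chosen, put $V_1=\dots=V_5=\ZZ_m$, and join $x\in V_i$ to $x+c_ib\in V_{i+1}$ for every $b\in B$ (indices mod $5$). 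Then $G$ has $5m\abs B$ edges; a closed walk of length $5$ corresponds to a tuple $(b_1,\dots,b_5)\in B^5$ with $\sum_i c_ib_i=0$; a $C_4$ lying in two consecutive parts corresponds to a nontrivial solution in $B$ of a Sidon equation $\beta_1+\beta_4=\beta_2+\beta_3$; and a $C_4$ bending through three consecutive parts $V_i,V_{i+1},V_{i+2}$ corresponds to a solution in $B$ of $c_i(\beta_1-\beta_2)=c_{i+1}(\beta_4-\beta_3)$ with $\beta_1\ne\beta_2$. Hence it suffices to produce $B\subseteq\ZZ_m$ with $\abs B\ge m^{1/2}e^{-O(\sqrt{\log m})}$ that (a) is a Sidon set, (b) satisfies $(B-B)\cap\lambda_i(B-B)=\{0\}$ for each $i$, where $\lambda_i=c_{i+1}/c_i$, and (c) has no solution to $\sum_i c_i\beta_i=0$ other than $\beta_1=\dots=\beta_5$: for such a $B$ the graph $G$ is $C_4$-free and every edge lies in exactly one $5$-cycle (the one coming from the diagonal solution), and since (a) forces $\abs B\le(1+o(1))\sqrt m$ the construction is optimal up to the $e^{-O(\sqrt{\log n})}$ factor. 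Taking $m$ a prime slightly below $n$ and padding each part with isolated vertices then gives the statement for all $n$.

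To build such a $B$ I would combine a Sidon-type construction (handling (a) and (b)) with a Behrend-type construction (for (c)). Condition (c) is the more tractable half: since the coefficients were chosen so that $\sum_i c_ix_i=0$ is one-sided, Behrend's construction furnishes a solution-free set of density $e^{-O(\sqrt{\log m})}$ — represent residues in a suitable base with digits restricted to a range short enough to preclude carries, and take the residues whose digit vector $\mathbf a$ lies on a fixed sphere $\snorm{\mathbf a}^2=R$; any solution of $c_1x_1=\sum_{i\ge2}\abs{c_i}x_i$ in this set yields the vector identity $c_1\mathbf a^{(1)}=\sum_{i\ge2}\abs{c_i}\mathbf a^{(i)}$, so dividing by $c_1=\sum_{i\ge2}\abs{c_i}$ exhibits the norm-$\sqrt R$ vector $\mathbf a^{(1)}$ as a convex combination of norm-$\sqrt R$ vectors, which (since $\mathbf a^{(1)}$ already has norm $\sqrt R$) forces $\mathbf a^{(1)}=\dots=\mathbf a^{(5)}$; optimising the base, $R$, and the number of digits as in Behrend's theorem gives density $e^{-O(\sqrt{\log m})}$. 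The plan is then to take $B$ to be the intersection of this Behrend-type set with a structured Sidon set $S$ for which (a) and (b) hold by design and which is uniform enough that $\abs B$ is close to $\abs S$ times the density of the Behrend set; so far as (b) goes, the point is that $S$ should be Sidon while its difference set $S-S$ avoids each of the finitely many prescribed dilates of itself.

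The hard part will be precisely this construction of $B$: a single set of size $m^{1/2}e^{-O(\sqrt{\log m})}$ that is simultaneously Sidon, has $(B-B)\cap\lambda_i(B-B)=\{0\}$ for each of the five ratios $\lambda_i$, and avoids nontrivially the one-sided equation $\sum_i c_i\beta_i=0$. Two difficulties stand out. First, the coefficients $c_i$ must be chosen so that $\sum_i c_ix_i=0$ is one-sided — the triangle-inequality step above collapses if both sides involve more than one vector, so a generic translation-invariant equation is not avoided by a sphere — and this one-sidedness must be reconciled with ``no two cyclically consecutive $c_i$ equal'', which is exactly what prevents the bent $C_4$'s of (b) from being forced by the Sidon structure itself. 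Second, and more delicately, the size demand is genuinely near-extremal, so the naive route of deleting bad elements from an extremal Sidon set is far too lossy for the dilate conditions in (b): one needs a structured (algebraic, finite-field-type) Sidon set whose difference set is controlled, possibly thinned by the Behrend digit condition, chosen so that each of the relations in (b) and (c) is ``read off'' from the digit/field structure, and regular enough that intersecting with the Behrend set loses only an $e^{-O(\sqrt{\log m})}$ factor. Making all of these constraints cohere is the substance of the proof; once the set $B$ is in hand, checking that $G$ is $C_4$-free and that every edge lies in exactly one $5$-cycle is the elementary bookkeeping indicated in the first paragraph.
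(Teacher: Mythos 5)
Your reduction is exactly the one the paper uses: a translation construction over $\ZZ_m$ in which $x\in V_i$ is joined to $x+c_ib\in V_{i+1}$ for $b$ in a set $B$, so that two-part $C_4$'s become Sidon coincidences, three-part $C_4$'s become solutions of $c_i(\beta_1-\beta_2)=c_{i+1}(\beta_4-\beta_3)$, and extra $5$-cycles become nontrivial solutions of $\sum_i c_i\beta_i=0$. The paper's choice of increments is $(1,2,3,4,-10)$ rather than your $(7,-1,-2,-3,-1)$, but both are one-sided and the bookkeeping is identical. Your Behrend half (sphere plus digit representation, using one-sidedness and strict convexity to force all digit vectors equal) is also exactly the standard adaptation the paper invokes, namely \cite[Theorem 2.3]{Ruz93}.

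The genuine gap is the piece you yourself flag as "the hard part": the existence of a set of size $m^{1/2}e^{-O(\sqrt{\log m})}$ satisfying simultaneously your conditions (a) and (b), i.e.\ having no nontrivial solution to $a(x_1-x_2)=b(x_3-x_4)$ for all $a,b$ in a fixed finite set of integers. You describe what such a "structured, finite-field-type Sidon set with controlled difference set" would have to look like but do not construct it, and without it the proof is not complete. This is not something you can get by deleting bad elements from an ordinary Sidon set (as you correctly observe, that is far too lossy), but it is a known result: a minor modification of \cite[Theorem 7.3]{Ruz93}, carried out explicitly by Cilleruelo and Timmons \cite{CT14} under the name $k$-fold Sidon sets, produces exactly such sets of size $e^{-O(\sqrt{\log n})}n^{1/2}$. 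I would also note that your concern about the Sidon set being "uniform enough" for the intersection with the Behrend set to be large is unnecessary: since the Behrend condition is translation-invariant, intersecting the $k$-fold Sidon set with a \emph{random translate} of the Behrend set retains, in expectation, a $e^{-O(\sqrt{\log n})}$ fraction of its elements while preserving all difference-set properties, with no uniformity hypothesis needed. With the citation to \cite{Ruz93,CT14} supplied and the random-translate observation in place, your argument closes up and coincides with the paper's proof.
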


\emph{Related results.} 
An earlier application of sparse regularity to $C_4$-free (and, more generally, $K_{s,t}$-free) graphs may be found in~\cite{AKSV14}, where it was used to study a conjecture of Erd\H{o}s and Simonovits~\cite{ES82} in extremal graph theory. For instance, they show that if $s = 2$ and $t \geq 2$ or if $s = t = 3$, then the maximum number of edges in an $n$-vertex graph with no copy of $K_{s,t}$ and no copy of $C_k$ for some odd $k \geq 5$ is asymptotically the same as the maximum number of edges in a bipartite $n$-vertex graph with no copy of $K_{s,t}$.

\subsection{Extremal results in hypergraphs}

In an $r$-graph (i.e., an $r$-uniform hypergraph), 
a \emph{$(v,e)$-configuration} is a subgraph with $e$ edges and at most $v$ vertices. 
A central problem in extremal combinatorics is to estimate 
$f_r(n, v, e)$, the maximum number of edges in an $n$-vertex $r$-graph
without a $(v,e)$-configuration. 
For brevity, we drop the subscript when $r=3$, simply writing
$f(n,v,e) := f_3(n,v,e)$.

The systematic study of this function was initiated almost five decades ago by Brown, Erd\H{o}s, and S\'os~\cite{BES73,SEB73}. A famous conjecture that arose from their work~\cite{Erd75, EFR86} asks whether $f(n,e+3,e)=o(n^2)$ for any fixed $e \geq 3$. For $e = 3$, this problem was resolved by Ruzsa and Szemer\'edi~\cite{RSz78}. 
In fact, this $(6,3)$-theorem, rather than the triangle removal lemma, was their original motivation for studying such problems. Their result has been extended in many directions, but the problem of showing that $f(n, e+3, e) = o(n^2)$ remains open for all $e \geq 4$.

Our methods give the following new bound for a problem of this type, which turns out to be equivalent to \cref{cor:C5-free}.

\begin{corollary}  \label{cor:10-5}
$f(n,10,5)=o(n^{3/2})$.	
\end{corollary}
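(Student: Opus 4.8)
The plan is to derive this from \cref{cor:C5-free} via the standard dictionary between the Brown--Erd\H{o}s--S\'os function and triangle removal. Let $H$ be an $n$-vertex $3$-graph with $m$ edges and no $(10,5)$-configuration; we must show $m = o(n^{3/2})$. First observe that every pair of vertices lies in at most $4$ edges of $H$, since $5$ edges through a common pair form a $(7,5)$-configuration. Hence a greedy procedure that repeatedly selects an edge and discards every other edge meeting it in two vertices (at most three pairs, each in at most three further edges, so at most nine others) yields a \emph{linear} sub-$3$-graph $H' \subseteq H$ with $|E(H')| \gtrsim m$; a uniformly random $3$-colouring of its vertices then produces a $3$-partite linear sub-$3$-graph $H'' \subseteq H'$ with parts $U_1, U_2, U_3$ and still $\gtrsim m$ edges. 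Both $H'$ and $H''$ inherit the property of having no $(10,5)$-configuration.

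Next, build the auxiliary graph $G$ on vertex set $U_1 \sqcup U_2 \sqcup U_3$ (at most $n$ vertices): for each edge $\{x,y,z\} \in E(H'')$ with $x\in U_1$, $y\in U_2$, $z\in U_3$, put the triangle on $\{x,y,z\}$ into $G$. By linearity every edge of $G$ lies in exactly one such triangle, so $G$ contains $\gtrsim m$ pairwise edge-disjoint triangles; consequently any edge set whose deletion makes $G$ triangle-free has size $\gtrsim m$. The gain is that $G$ has very few short cycles. Since in any proper $3$-colouring of $C_5$ exactly one colour class is a singleton, the five edges of a $C_5$ in $G$, traced back to $H''$, come from at most five edges of $H''$ that together span at most ten vertices; if these five edges are distinct this is a $(10,5)$-configuration, a contradiction. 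So every $C_5$ of $G$ is \emph{degenerate}, with at most four underlying edges of $H''$, which (one checks) form a loose triangle or a loose $4$-cycle in $H''$; likewise every $C_4$ of $G$ comes from a loose $4$-cycle of $H''$. If one can delete a suitable set of edges of $G$ to destroy all of these degenerate cycles without killing too many of the edge-disjoint triangles, then the resulting graph is $C_5$-free and still carries $\gtrsim m$ edge-disjoint triangles, so \cref{cor:C5-free} --- which says any $n$-vertex $C_5$-free graph can be made triangle-free by deleting $o(n^{3/2})$ edges --- forces $m = o(n^{3/2})$. (A matching construction in the other direction gives the claimed equivalence with \cref{cor:C5-free}.)

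The delicate step, and the expected main obstacle, is the removal of the degenerate $C_5$'s. Loose triangles are harmless: in a $(10,5)$-free $3$-graph they are pairwise edge-disjoint (two sharing an edge give $5$ edges on at most $9$ vertices), so they can be eliminated by deleting at most $\tfrac13|E(H'')|$ edges, after which $\gtrsim m$ edge-disjoint triangles survive. Loose $4$-cycles are the problem: they need not be edge-disjoint (two of them may share two edges), and a $(10,5)$-free $3$-graph can have as many as $\Theta(n^2)$ of them, far too many to destroy one at a time without risking the triangle packing. The robust fix is to abandon exact $C_5$-freeness and instead verify the weaker hypotheses of \cref{thm:345-removal}, which only require $G$ to contain $o(n^2)$ copies of $C_4$ and $o(n^{5/2})$ copies of $C_5$: a short argument shows the $2$-shadow of $H''$ has no $K_{2,5}$ (such a configuration would force a $(10,5)$-configuration), hence its codegrees are at most $4$, which bounds the number of loose $4$-cycles of $H''$ --- and therefore the number of $C_4$'s and $C_5$'s of $G$ --- by $O(n^2)$; pushing the $C_4$ count down from $O(n^2)$ to $o(n^2)$, through a sharper analysis of the shadow or a preliminary cleaning of $H''$, is where the real work remains.
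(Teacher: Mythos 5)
The gap you flag at the end is real and it is the crux: without an $o(n^2)$ bound on the number of $C_4$'s in $G$ (equivalently, without controlling the degenerate Berge cycles of $H''$), neither \cref{thm:345-removal} nor \cref{cor:C5-free} can be invoked, so the argument does not close. The idea you are missing is to clean up by deleting \emph{vertices} rather than edges: repeatedly delete from $H$ any vertex lying in at most four edges. This removes at most $4n = o(n^{3/2})$ edges and leaves an induced subhypergraph $H'$ in which every vertex has degree at least $5$, and that minimum-degree condition is exactly what turns every degenerate configuration into a genuine $(10,5)$-configuration. First, $H'$ is automatically linear: two edges sharing two vertices span at most $4$ vertices, and three further edges through one of those vertices add at most $6$ more, giving $5$ edges on at most $10$ vertices. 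Second, let $G$ be the union of the (now edge-disjoint) triangles coming from $H'$. A $C_5$ in $G$ is realized by $k \in \{3,4,5\}$ distinct hyperedges spanning at most $10-2(5-k)$ vertices; if $k<5$, extend by $5-k$ further edges through a vertex of the configuration (possible since every degree is at least $5$), each adding at most two new vertices. In every case one obtains $5$ edges on at most $10$ vertices, so $G$ is genuinely $C_5$-free --- no degenerate cycles survive --- and \cref{cor:C5-free} applies directly. The random $3$-partition and the appeal to \cref{thm:345-removal} then become unnecessary.

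A smaller point: your claim that a $(10,5)$-free $3$-graph can contain $\Theta(n^2)$ loose $4$-cycles is false. A Berge $4$-cycle has $4$ edges on at most $8$ vertices, so any fifth edge meeting it would create a $(10,5)$-configuration; hence each such cycle lies in a connected component with exactly $4$ edges, and there are only $O(n)$ of them. So your route could in principle be salvaged by this kind of component analysis, but the vertex-deletion reduction above is what the paper does and is much shorter.
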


We next explain how to deduce \cref{cor:10-5} from \cref{cor:C5-free}. Suppose $H$ is a $3$-graph on $n$ vertices without a $(10,5)$-configuration. We greedily delete vertices from $H$ 
one at a time if they are in at most four edges. In total, this process deletes at most $4n$ edges. The resulting induced subgraph $H'$ has the property that each vertex is in at least $5$ edges. Furthermore, 
$H'$ is linear, that is, any two edges intersect in at most one vertex. Indeed, if there are two edges sharing vertices $u, v$, then, by adding three additional edges touching $v$, we get a $(10,5)$-configuration. 
If we now let $G$ be the underlying graph formed by converting all edges of $H'$ to triangles, we see that $G$ is a union of edge-disjoint triangles. Moreover, it is $C_5$-free, since otherwise it would contain a $(10,5)$-configuration. Hence, by \cref{cor:C5-free}, it has $o(n^{3/2})$ edges. But this then implies that $H'$ and, therefore, $H$ has $o(n^{3/2})$ edges.
Conversely, to show that \cref{cor:10-5} implies \cref{cor:C5-free}, 
	it suffices to observe that the 3-graph formed by a collection of edge-disjoint triangles in a $C_5$-free graph 
	does not have a $(10,5)$-configuration.
	
\medskip

A \emph{(Berge) cycle} of length $k \ge 2$ (or simply a \emph{$k$-cycle}) 
in a hypergraph is an alternating sequence of distinct vertices and edges $v_1,e_1,\ldots,v_k,e_k$
such that $v_i,v_{i+1} \in e_i$ for each $i$ (where indices are taken modulo $k$). 
For example, a 2-cycle consists of a pair of edges intersecting in a pair of distinct vertices.
The \emph{girth} of an $r$-graph is the length of the shortest cycle. 

Let $h_r(n,g)$ denote the maximum number of edges in 
	an $r$-graph on $n$ vertices of girth larger than $g$.
The following observation, whose proof may be found in \cref{sec:pf-girth}, relates the Brown--Erd\H{o}s--S\'os problem to that of estimating $h_r(n, g)$.

\begin{proposition} \label{prop:girth}
	For $r \ge 2$ and $e \ge 2$, there exists $n_0(r,e)$ such that $f_r(n,(r-1)e,e) = h_r(n,e)$ for all $n \ge n_0(r,e)$.
\end{proposition}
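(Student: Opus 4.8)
The plan is to prove the two bounds $h_r(n,e)\le f_r(n,(r-1)e,e)$ (valid for all $n$) and $f_r(n,(r-1)e,e)\le h_r(n,e)$ (for $n$ large) separately.

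For the first bound the key elementary fact is that a Berge-acyclic $r$-graph with $k$ edges spans at least $(r-1)k+1$ vertices. I would prove this by adding the edges one connected component at a time: within a component, each edge after the first must meet the union of the earlier ones in \emph{exactly} one vertex, since meeting it in no vertex would disconnect the component, while meeting it in two vertices, together with a Berge path through the already-connected part, would produce a Berge cycle. Hence each such edge contributes $r-1$ new vertices, and the first edge of each component contributes $r$. Now if $G$ has girth greater than $e$ and $F$ is any set of $e$ edges of $G$, then $F$ is Berge-acyclic — a Berge cycle inside $F$ would use at most $e$ edges and so have length at most $e$ — and therefore spans more than $(r-1)e$ vertices. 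Thus $G$ has no $((r-1)e,e)$-configuration, which gives $h_r(n,e)\le f_r(n,(r-1)e,e)$ for every $n$.

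For the reverse bound I would take an $n$-vertex $r$-graph $H$ with no $((r-1)e,e)$-configuration and argue $e(H)\le h_r(n,e)$ once $n$ is large. Call a connected component of $H$ \emph{bad} if it contains a Berge cycle of length at most $e$. Such a component has at most $e-1$ edges: otherwise, take a cycle of length $\ell\le e$ in it (its edges span at most $(r-1)\ell$ vertices, as each edge contains at least two of the $\ell$ cycle vertices) and greedily adjoin $e-\ell$ further edges of the same component, each meeting the current union — the result is $e$ edges on at most $(r-1)\ell+(e-\ell)(r-1)=(r-1)e$ vertices, a configuration. Let $B$ be the union of the bad components, on $\nu$ vertices; since each bad component has at least $r$ vertices, $e(B)\le (e-1)\nu/r$, and $H_{\mathrm{good}}:=H-B$ has girth greater than $e$. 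I would then split on the maximum degree of $H_{\mathrm{good}}$. If $\Delta(H_{\mathrm{good}})\le e-1$, then $e(H)=e(H_{\mathrm{good}})+e(B)\le (e-1)(n-\nu)/r+(e-1)\nu/r=(e-1)n/r$, which is less than $h_r(n,e)$ once $n$ is large, because $h_r(n,e)$ is super-linear in $n$ (a probabilistic deletion argument already gives $h_r(n,e)=\Omega(n^{1+1/(e-1)})$). Otherwise $H_{\mathrm{good}}$ has a vertex $w$ of degree at least $e$; rerunning the extension argument but now adjoining to the $m_i$ edges of a bad component a star of $e-m_i$ edges through $w$ (which spans $1+(e-m_i)(r-1)$ vertices, just as efficiently as a loose path, and is disjoint from that component), absence of configurations forces $v(B_i)\ge (r-1)m_i$ for every bad component, hence $e(B)\le \nu/(r-1)$. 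Now delete $B$ and, on its freed vertices, hang from a single vertex of $H_{\mathrm{good}}$ a loose path with $e(B)$ edges, which uses exactly $(r-1)e(B)\le\nu$ freed vertices (leave the rest isolated). A loose path hung off one vertex cannot lie on any Berge cycle, so the new graph has girth greater than $e$, has $n$ vertices, and has $e(H_{\mathrm{good}})+e(B)=e(H)$ edges; thus $h_r(n,e)\ge e(H)$.

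I expect the main obstacle to be obtaining the \emph{exact} equality, not merely equality up to lower-order terms — indeed the identity genuinely fails for small $n$ (a single Berge $(e-1)$-cycle on $(r-1)(e-1)$ vertices already shows $f_r>h_r$ there), because a bad component can be locally denser, in edges per vertex, than any girth-$>e$ graph on the same vertex set. The point that makes the argument break even is precisely that forbidding configurations \emph{caps} the density of every bad component at the density of a loose path (the star-extension step), so the loose-forest replacement loses nothing; and the auxiliary dichotomy on $\Delta(H_{\mathrm{good}})$, combined with super-linearity of $h_r(\cdot,e)$, is what pins down the threshold $n_0(r,e)$ — governed by when $h_r(n,e)>(e-1)n/r$.
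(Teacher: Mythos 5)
Your proof is correct, and while the easy inequality $h_r(n,e)\le f_r(n,(r-1)e,e)$ is handled exactly as in the paper, your argument for the reverse inequality takes a genuinely different route. The paper's path goes through \cref{lem:girth-cycle}: a configuration-free $r$-graph that contains a cycle of length at most $e$ has at most $\max\{e-1,\,n/(r-1)\}$ edges \emph{in total}. This is proved by a global accounting argument — each component $P_i$ gets a potential $p_i=(r-1)e_i-n_i$, the cycle's component is extended by greedily chosen pieces of the components of largest potential, and absence of configurations forces $\sum_i p_i\le 0$. Because that conclusion is so strong, the paper only needs the very weak lower bound $h_r(n,e)>n/(r-1)$, supplied by the explicit theta-graph construction of \cref{lem:theta}. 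You instead localize: you cap each bad component at $1/(r-1)$ edges per vertex via the star-extension through a high-degree vertex of the good part (the same mechanism as the paper's potential argument, specialized to a star in a single good component), and then perform an explicit surgery, replacing the bad components by a pendant loose path with the same number of edges, directly exhibiting a girth-$>e$ graph that witnesses $h_r(n,e)\ge e(H)$. The price is your Case 1, where no vertex of degree at least $e$ exists and you only get $e(H)\le (e-1)n/r$; to beat this bound you genuinely need $h_r(n,e)$ to be superlinear, which the paper's construction does not give, so you invoke the probabilistic deletion bound $h_r(n,e)=\Omega(n^{1+1/(e-1)})$. That bound is standard and correct, but it is the one ingredient you assert without proof and it would need to be written out (the expected number of Berge $\ell$-cycles in the binomial random $r$-graph is $O(n^{(r-1)\ell}p^{\ell})$; delete an edge from each and optimize $p$, with $\ell=e$ the binding constraint); the paper's arrangement is organized precisely so that nothing beyond a barely-superlinear-in-$n/(r-1)$ construction is required. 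Both arguments correctly locate the obstruction for small $n$ in a single short Berge cycle being denser than any girth-$>e$ graph on the same vertex set.
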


By \cref{cor:10-5}, we thus have the following result for $r=3$.  Note that the general result follows from the case $r=3$. Indeed, if an $r$-graph has girth $g$, replacing each edge by a subset of size three, we get a $3$-graph on the same set of vertices with the same number of edges and girth at least $g$. 

\begin{corollary} \label{cor:girth>5}
	Let $r \ge 3$. Then $h_r(n,5) = o(n^{3/2})$, i.e., every $r$-graph on $n$ vertices of girth greater than 5 has $o(n^{3/2})$ edges. 
\end{corollary}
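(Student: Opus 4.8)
The plan is to deduce \cref{cor:girth>5} for general $r \ge 3$ from the case $r = 3$, which in turn follows from \cref{cor:10-5}. First I would address the reduction to $r = 3$, which is already sketched in the excerpt: given an $r$-graph $H$ on $n$ vertices with girth greater than $5$, I replace each hyperedge by an arbitrarily chosen $3$-element subset of itself. This produces a $3$-graph $H'$ on the same vertex set. The number of edges is unchanged provided the chosen triples are distinct, so I need to check that no two distinct edges of $H$ map to the same triple; this follows because if two distinct edges $e \ne e'$ of $H$ both contained a common triple $\{a,b,c\}$, then $a, e, b, e', a$ would be a Berge $2$-cycle (or, if $r = 3$, $e = e'$ already), contradicting girth greater than $5$. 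Moreover, any Berge $k$-cycle $v_1, f_1, \dots, v_k, f_k$ in $H'$ with $k \le 5$ lifts to a Berge $k$-cycle in $H$ using the original edges, since $v_i, v_{i+1} \in f_i \subseteq e_i$ where $e_i$ is the edge of $H$ that $f_i$ came from, and distinctness of the $f_i$ forces distinctness of the $e_i$. Hence $H'$ has girth greater than $5$ as well, so $h_r(n, 5) \le h_3(n, 5)$, and the case $r = 3$ suffices.

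For $r = 3$, I would argue that $h_3(n, 5) = f_3(n, 10, 5)$ for all sufficiently large $n$, which is exactly \cref{prop:girth} with $r = 3$ and $e = 5$ (giving $f_3(n, (r-1)e, e) = f_3(n, 10, 5) = h_3(n, 5)$). Granting \cref{prop:girth}, we get $h_3(n, 5) = f_3(n, 10, 5) = f(n, 10, 5) = o(n^{3/2})$ by \cref{cor:10-5}, and combining with the reduction above, $h_r(n,5) = o(n^{3/2})$ for every $r \ge 3$. The second assertion of the corollary is just the definition of $h_r(n,5)$ unwound: an $r$-graph on $n$ vertices with girth greater than $5$ has at most $h_r(n,5) = o(n^{3/2})$ edges.

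Assembling this into a clean write-up, the only genuine content beyond quoting earlier results is the two lifting arguments in the $r \to 3$ reduction (distinctness of the triples, and that short cycles lift), both of which are short case checks on Berge cycles of length at most $5$. I expect the main obstacle — if one can call it that — to be purely bookkeeping: making sure the girth condition ``greater than $5$'' is used correctly in ruling out the degenerate coincidence of triples (a $2$-cycle has length $2 \le 5$, so it is forbidden) and in the cycle-lifting step, and double-checking that \cref{prop:girth} is being invoked with the right parameters. There is no new combinatorial idea needed here; all the difficulty is hidden inside \cref{cor:10-5}, which rests on the sparse regularity method and the $5$-cycle removal lemma.
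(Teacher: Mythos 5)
Your proposal is correct and follows the paper's own route exactly: reduce to $r=3$ by replacing each hyperedge with a triple (the paper only sketches this step, which you fill in correctly), then combine \cref{prop:girth} with $r=3$, $e=5$ and \cref{cor:10-5}. No issues.
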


\emph{Related results.} 
Previously, upper bounds of the form $h_r(n,5) \le c_r n^{3/2}$ were known~\cite{LV03,EM18ar}.
In fact, Lazebnik and Verstra\"ete~\cite{LV03} 
showed that $f(n,8,4) = h_3(n,4) = (1/6 + o(1))n^{3/2}$ for all sufficiently large $n$.\footnote{Lazebnik and Verstraete~\cite{LV03} actually claim that $f(n,8,4) = h_3(n,4)$ for all $n$. However, this is false for small $n$. For instance, it is easy to see that $f(6, 8, 4) = 3$, while $h_3(6,4) = 2$. Nevertheless, their claim that $f(n, 8, 4) = (1/6 +o(1))n^{3/2}$ still stands by our \cref{prop:girth} and their result that $h_3(n, 4) = (1/6 +o(1))n^{3/2}$.}
Bollob\'as and Gy\H{o}ri \cite{BG08} proved that the maximum number of edges in a $3$-graph on $n$ vertices with no 5-cycle is $\Theta(n^{3/2})$, which implies that the maximum number of triangles in an $n$-vertex $C_5$-free graph is $\Theta(n^{3/2})$. In contrast, \cref{cor:C5-free} says that the maximum number of \emph{edge-disjoint} triangles in a $C_5$-free graph on $n$ vertices is $o(n^{3/2})$. See \cite{AS16,EM18ar,EMSG19,FO17} for further improvements and simplifications of the Bollob\'as--Gy\H{o}ri result.

\medskip

Given a family $\mathcal{F}$ of $r$-graphs,
	we say that an $r$-graph is \emph{$\mathcal F$-free} if it contains no copy of any element of $\mathcal F$ as a subgraph. Define $\ex(n,\mathcal{F})$ to be the maximum number of edges in an $\mathcal F$-free $r$-graph on $n$ vertices.
It is easy to see that
\[
2^{\ex(n,\mathcal{F})} 
\le 
\#\{\text{$\mathcal F$-free $r$-graphs on $n$ labeled vertices}\} 
\le 
\sum_{m=0}^{\ex(n,\mathcal{F})} \binom{\binom{n}{r}}{m} \le 2^{\ex(n,\mathcal{F})r\log n}.
\]
In many instances it is known that the lower bound is closer to the truth. Early invesigations into this and related problems led to the first seeds of the important container method, as developed by Kleitman and Winston \cite{KW82} 
and by Sapozhenkho \cite{Sap01,Sap02,Sap03}.
More recently, influential works of  
Balogh, Morris, and Samotij \cite{BMS15} 
and 
of Saxton and Thomason \cite{ST15}
pushed these ideas considerably further and showed their broad applicability.

Using the container method, 
Palmer, Tait, Timmons, and Wagner~\cite{PTTW19}
proved that the number of $r$-graphs on $n$ vertices with girth greater than 4 is at most $2^{c_rn^{3/2}}$ for an appropriate constant $c_r$. 
We improve this bound when the girth is greater than 5, strengthening Corollary \ref{cor:girth>5}. 
We refer the reader to \cref{sec:pf-count-girth>5} for the proof.

\begin{theorem} \label{thm:count-girth>5}
For every fixed $r \ge 3$,
the number of $r$-graphs 
on $n$ vertices 
with girth greater than $5$
is $2^{o(n^{3/2})}$.
\end{theorem}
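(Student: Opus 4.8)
The plan is to combine the hypergraph container method with the sparse $C_5$-removal lemma (Theorem \ref{thm:5-cycle-rem}, or rather the non-partite Corollary \ref{cor:girth>5} together with a supersaturation statement derived from it). By the reduction already noted in the excerpt, it suffices to treat $r = 3$: an $r$-graph of girth greater than $5$ yields, by shrinking each edge to a $3$-element subset, a $3$-graph on the same vertex set with the same number of edges and girth still greater than $5$, and this shrinking map is injective on edge sets in a way that changes the count by at most a $2^{o(n^{3/2})}$ factor (there are at most $\binom{r}{3}^{m}$ preimages for an $m$-edge $3$-graph, and $m = o(n^{3/2})$). So I will count $3$-graphs on $n$ vertices with girth greater than $5$; equivalently, linear $3$-graphs (girth $> 2$) whose triangle-expansion contains no $C_3, C_4$, or $C_5$. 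Passing to the underlying graph $G$ on $n$ vertices: these $3$-graphs correspond bijectively to collections of edge-disjoint triangles in a graph $G$ that is $\{C_4, C_5\}$-free, and a linear $3$-graph of girth $>5$ has all its triangles edge-disjoint and induces no other triangles.

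The core step is a \emph{supersaturation + container} argument for the triangle hypergraph. Fix a $\{C_4,C_5\}$-free graph structure; more robustly, work directly with $3$-uniform ``triangle systems.'' Define an auxiliary $3$-uniform hypergraph $\mathcal{H}$ whose vertices are the $\binom{n}{2}$ potential edges of $K_n$ and whose hyperedges are the triples of edges forming a triangle in $K_n$; then a girth-$>5$ $3$-graph on $[n]$ is (essentially) an independent set in $\mathcal{H}$ that additionally avoids the ``short configurations'' ($C_4$'s, $C_5$'s, and pairs of triangles sharing an edge) — but the key point from Corollary \ref{cor:girth>5} is that once we also forbid those, any such system has only $o(n^{3/2})$ hyperedges, i.e.\ is a very small independent set. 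First I would establish a \emph{supersaturation} version: there is a function $\omega(n) \to 0$ such that every collection of more than $\omega(n) n^{3/2}$ edge-disjoint triangles in a $C_4$-free graph on $[n]$ already forces many copies of $C_5$ — concretely, $\Omega(n^{5/2})$ of them, or at least $\Omega(1) \cdot t^{\,c}$ copies for a system of size $t$ — by a standard averaging/tensor-power trick applied to Theorem \ref{thm:345-removal} (if a bounded density of $C_5$'s could be avoided with many triangles, blowing up would contradict the removal statement). This supersaturation is exactly the hypothesis needed to feed the container lemma of Balogh--Morris--Samotij \cite{BMS15} / Saxton--Thomason \cite{ST15}.

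With supersaturation in hand, the container method yields a family $\mathcal{C}$ of at most $2^{o(n^{3/2})}$ ``containers,'' each a $3$-graph (triangle system, or rather a set of potential hyperedges in $\mathcal{H}$ augmented by the forbidden short patterns) with the property that (i) every girth-$>5$ $3$-graph on $[n]$ is contained in some $C \in \mathcal{C}$, and (ii) each container $C$, viewed as a triangle system, is ``almost'' $\{C_3^{\mathrm{shared}}, C_4, C_5\}$-free — precisely, it spans only $o(n^{3/2})$ hyperedges after a cleaning step, because a container with more than $\omega(n) n^{3/2}$ hyperedges would contain too many short configurations to be a valid container at the chosen supersaturation density. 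Hence each container holds at most $\binom{o(n^{3/2})}{\le o(n^{3/2})} = 2^{o(n^{3/2})}$ subgraphs, and the total count is $|\mathcal{C}| \cdot 2^{o(n^{3/2})} = 2^{o(n^{3/2})}$.

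The main obstacle I anticipate is setting up the container machinery cleanly for a \emph{pair} of forbidden structures of different ``orders'' — we are simultaneously forbidding $2$-cycles (two edges/triangles sharing two vertices, a degeneracy condition that makes the system linear) and the genuinely global obstruction of $C_5$'s — and making sure the supersaturation statement is quantitatively of the form the container lemma requires (an $(\mathcal{H}, \varepsilon)$-bounded codegree condition, or the ``every set of density $\ge \tau$ spans $\ge \varepsilon$ fraction of hyperedges'' condition) with $\tau = o(n^{-1/2})$, matching the $o(n^{3/2})$ edge bound. Concretely, one must verify that the triangle hypergraph $\mathcal{H}$ on $\binom{n}{2}$ vertices has the right degree/codegree profile (max degree $\Theta(n)$, codegrees $O(1)$) so that the container lemma applies with container size $o(n^{3/2})$; this is where the $C_4$-free hypothesis is essential, since it is what makes distinct triangles share at most one edge in a controlled way and hence controls the relevant codegrees. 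Once the codegree bookkeeping and the supersaturation constant are pinned down, the rest is a routine application of the container lemma followed by the trivial counting bound inside each container.
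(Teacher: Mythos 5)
Your plan is a genuinely different route from the paper's, but as written it has two concrete gaps. First, the reduction to $r=3$ is wrong as justified: shrinking each $r$-edge to a $3$-subset is a map whose fibres are \emph{not} of size $\binom{r}{3}^m$ --- to recover an $r$-edge from one of its triples you must choose the remaining $r-3$ vertices, so an $m$-edge $3$-graph has up to $\binom{n}{r-3}^m = 2^{\Theta(m\log n)}$ preimages, and $m = o(n^{3/2})$ does not make this $2^{o(n^{3/2})}$. (The paper instead encodes each girth-$>5$ $r$-graph losslessly as a $\binom{r}{3}$-tuple of girth-$>5$ $3$-graphs, using that triples sharing two vertices must come from the same $r$-edge, which gives $F_r(n)\le F_3(n)^{\binom{r}{3}}$.) Second, and more seriously, the container step is not set up in a form that can close. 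Your supersaturation statement is proved (via \cref{thm:345-removal}) only for \emph{$C_4$-free unions of edge-disjoint triangles}, but the containers produced by the Balogh--Morris--Samotij/Saxton--Thomason machinery are arbitrary subsets of $\binom{[n]}{3}$, and the statement you actually need --- every set of $\ge \epsilon n^{3/2}$ triples spans ``many'' forbidden configurations, in the balanced sense required by the container lemma --- fails for each forbidden configuration taken individually: there are $3$-graphs with $\Theta(n^{3/2})$ edges and girth exactly $5$ (no Berge $2$-, $3$-, or $4$-cycles at all), and Berge-$C_5$-free $3$-graphs with $\Theta(n^{3/2})$ edges (Bollob\'as--Gy\H{o}ri). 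So one must run a multi-type container argument simultaneously for Berge cycles of lengths $2$ through $5$, whose configuration hypergraphs have different uniformities and wildly different degree profiles, and calibrate a joint balanced-supersaturation theorem so that the container count and container size are both $2^{o(n^{3/2})}$ and $o(n^{3/2})$ respectively. That calibration is the entire difficulty, and your proposal defers it rather than carrying it out; it is not a routine verification.

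For contrast, the paper avoids containers for the $C_5$ condition altogether. It applies \cref{cor:girth>5} to each individual girth-$>5$ $3$-graph $H$ to conclude $e(H) = o(n^{3/2})$, observes that $H$ is uniquely recoverable from its shadow graph $G$ (every edge of $G$ lies in a unique triangle), and notes that $G$ is a $C_4$-free graph with $o(n^{3/2})$ edges. The problem then reduces to \cref{prop:count-c4-free-o}, counting $C_4$-free graphs with $o(n^{3/2})$ edges, which is handled by a Kleitman--Winston-style vertex-by-vertex attachment argument (bounding the number of ways to add a vertex of degree $d = o(\sqrt n)$ by counting independent sets in the square of the current graph). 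If you want to pursue your container route, the place to start is a balanced supersaturation theorem for the full family of short Berge cycles; otherwise I would recommend the paper's reduction, which converts the extremal input (\cref{cor:girth>5}) directly into a counting statement about sparse $C_4$-free graphs.
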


\subsection{Number-theoretic applications} 
It was already noted by Ruzsa and Szemer\'edi that their results imply Roth's theorem~\cite{Rot53}, the statement that every subset of $[n] := \{1, 2, \dots, n\}$ without 3-term arithmetic progressions has size $o(n)$. Here we discuss some number-theoretic applications of our sparse removal results along similar lines. We first illustrate our results with two specific applications, beginning with the following theorem.\footnote{Though we focus here on applications to linear equations with five variables, all of our results extend to linear equations with more than five variables by using the counting lemma for longer cycles mentioned in an earlier footnote.}

\begin{theorem} \label{thm:avoid-eqn1}
	Every subset of $[n]$ without a nontrivial solution to the equation
	\begin{equation}\label{eq:avoid-eqn1}
		x_1 + x_2 + 2x_3 = x_4 + 3x_5
	\end{equation}
	has size $o(\sqrt{n})$. Here a trivial solution is one of the form $(x_1, \dots, x_5) = (x,y,y,x,y)$ or $(y,x,y,x,y)$ for some $x,y \in \ZZ$.
\end{theorem}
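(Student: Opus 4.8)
The plan is to encode solutions of \eqref{eq:avoid-eqn1} as $5$-cycles in a $5$-partite Cayley-type graph and then derive a contradiction from the sparse $5$-cycle removal lemma, \cref{thm:5-cycle-rem}. Write the equation as $\sum_{i=1}^5 c_i x_i = 0$ with $(c_1,\dots,c_5) = (1,1,2,-1,-3)$, and note $\sum_i c_i = 0$. Suppose, for contradiction, that there is a constant $\epsilon_0 > 0$ and arbitrarily large $n$ admitting a set $A \subseteq [n]$ with no nontrivial solution to \eqref{eq:avoid-eqn1} and $|A| \ge \epsilon_0 \sqrt n$. Fix a prime $M$ with $8n < M \le 16n$ (Bertrand's postulate), set $N := M$, and let $G$ be the graph with parts $V_1,\dots,V_5$, each a copy of $\ZZ_M$, in which $u \in V_i$ is joined to $w \in V_{i+1}$ (indices mod $5$) exactly when $w - u \in c_i A := \{c_i a : a \in A\}$. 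Because $M$ is prime and exceeds $|c_i|$ and $\max A$, the map $a \mapsto c_i a \pmod M$ is injective, so each edge between $V_i$ and $V_{i+1}$ records a well-defined element of $A$, and a $C_5$ of $G$ traces the partial sums of a $5$-tuple from $A$.

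The first thing I would establish is that $A$ must be a Sidon set. Indeed, if $a_1+a_4 = a_2+a_3$ with $a_j \in A$ and $\{a_1,a_4\}\ne\{a_2,a_3\}$---equivalently $a_1\notin\{a_2,a_3\}$, since $a_1 = a_2$ forces $a_4 = a_3$ and $a_1 = a_3$ forces $a_4 = a_2$---then $(x_1,\dots,x_5) = (a_1,a_4,a_3,a_2,a_3)$ satisfies $a_1+a_4+2a_3 = a_2+3a_3$, i.e.\ \eqref{eq:avoid-eqn1}, and is nontrivial (not of the form $(x,y,y,x,y)$ because $a_1\ne a_2$, and not of the form $(y,x,y,x,y)$ because $a_1\ne a_3$), contradicting the hypothesis on $A$. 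Hence all differences $a - a'$ with $a > a'$ in $A$ are distinct, so $\binom{|A|}{2}\le n$ and $|A|\le\sqrt{2n}+1$; moreover, a $C_4$ of $G$ with two vertices $u\ne u'$ in some $V_i$ and two vertices $w\ne w'$ in $V_{i+1}$ would, upon dividing its four edge-differences by $c_i$, produce exactly such a forbidden relation in $A$. Since $G$ is $5$-partite with edges only between consecutive parts, it follows that $G$ has no $C_4$ with all four vertices in two parts, so hypothesis (a) of \cref{thm:5-cycle-rem} holds for every $\delta$.

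Next I would bound the number of $C_5$'s from above and below. Any $C_5$ of $G$ uses one vertex $v_i$ from each $V_i$; walking around gives $v_{i+1} - v_i = c_i a_i$ with $a_i \in A$ and $\sum_i c_i a_i \equiv 0 \pmod M$, and since $|\sum_i c_i a_i| \le \big(\sum_i |c_i|\big)\max A < M$, in fact $\sum_i c_i a_i = 0$. So $(a_1,\dots,a_5)$ is a genuine solution of \eqref{eq:avoid-eqn1}, hence trivial and therefore determined by a pair in $A^2$; as the cycle is determined by $(a_1,\dots,a_5)$ together with a choice of $v_1 \in V_1$, the number of $C_5$'s in $G$ is at most $2|A|^2 M = O(n^2)$, which is below $\delta N^{5/2}$ once $n$ is large. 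In the other direction, for each $a \in A$ and $v_1 \in \ZZ_M$ the closed walk through $v_1 + \sigma_j a$ ($j = 0,\dots,5$, where $\sigma_j = c_1+\dots+c_j$, so $\sigma_0 = \sigma_5 = 0$) closes up, and since the partial sums $\sigma_0,\dots,\sigma_4$ equal $0,1,2,4,3$ the five vertices are distinct modulo $M$ (as $a \ne 0$ and $M > 4\max A$), so it is a $C_5$ (encoding the trivial solution $(a,\dots,a)$). Two such cycles from distinct pairs $(a,v_1)$ are edge-disjoint: sharing the edge between $V_i$ and $V_{i+1}$ would force $c_i(a-a')\equiv 0\pmod M$, hence $a = a'$ and then $v_1 = v_1'$. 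Thus $G$ has at least $|A|\cdot M \ge c\,N^{3/2}$ pairwise edge-disjoint copies of $C_5$ for a constant $c = c(\epsilon_0) > 0$.

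Finally I would invoke \cref{thm:5-cycle-rem} with $\epsilon := c/2$ to obtain the corresponding $\delta$; taking $n$ (hence $N$) large enough that the two counts above verify hypotheses (a) and (b), the lemma makes $G$ into a $C_5$-free graph after deleting at most $(c/2)N^{3/2}$ edges---impossible, since destroying all $c\,N^{3/2}$ edge-disjoint $C_5$'s needs at least $c\,N^{3/2}$ deletions. I expect the real obstacle to be exactly the point the Sidon observation resolves: a naive Cayley-type construction typically has on the order of $M\cdot E(A)$ four-cycles inside a single pair of parts and up to $M|A|^2$ copies of $C_5$, and only the structural fact that the hypothesis forces $A$ to be a Sidon set (which simultaneously rules out those $C_4$'s and pins $|A| = O(\sqrt n)$) brings both quantities below the thresholds $\delta N^2$ and $\delta N^{5/2}$ demanded by \cref{thm:5-cycle-rem}. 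Note that the Sidon property by itself yields only $|A| = O(\sqrt n)$; extracting the $o(\sqrt n)$ bound is precisely what needs the removal lemma. The remaining points---Bertrand's postulate for $M$, distinctness of the cycle vertices, and the edge-disjointness check---are routine.
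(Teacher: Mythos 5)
Your argument is correct and follows essentially the same route as the paper: you specialize directly to the equation, but the Cayley-type $5$-partite graph, the observation that the hypothesis forces the set to be Sidon (by setting $x_3=x_5$), the resulting bounds on $C_4$'s and $C_5$'s, and the contradiction via $|A|\cdot M$ edge-disjoint $5$-cycles are exactly the mechanism the paper uses in deducing this theorem from \cref{thm:transl-invar} and \cref{thm:5-cycle-rem}. The only cosmetic differences are your use of a prime modulus via Bertrand (the paper takes the least $N>(\sum|a_i|)n$ coprime to the coefficients) and phrasing the endgame as a direct contradiction rather than as a removal statement.
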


Any set of integers without a nontrivial solution to \cref{eq:avoid-eqn1} must be a \emph{Sidon set}, with no nontrivial solution to the equation $x_1 + x_2 = x_4 + x_5$, since any nontrivial solution automatically extends to a nontrivial solution of \cref{eq:avoid-eqn1} by setting $x_3 = x_5$.
In particular, the upper bound for the size of Sidon sets, $(1+o(1))\sqrt{n}$, is also an upper bound for the size of a subset of $[n]$ without a nontrivial solution to \cref{eq:avoid-eqn1}. Our \cref{thm:avoid-eqn1} improves on this simple bound, though it remains an open problem to determine whether the bound can be improved further to $n^{1/2 - \epsilon}$ for some $\epsilon > 0$.

We now give a second number-theoretic application, this time restricting to Sidon sets.

\begin{theorem} \label{thm:avoid-two-eqn1}
	The maximum size of a Sidon subset of $[n]$ without a solution in distinct variables to the equation
	\[
	x_1 + x_2 + x_3 + x_4 = 4x_5
	\]
	is at most $o(\sqrt{n})$ and at least $n^{1/2 - o(1)}$.
\end{theorem}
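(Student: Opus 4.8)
The plan is to split the statement into its upper bound and lower bound, which are proved by very different means.

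\medskip

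\emph{Upper bound.} The strategy is to encode a Sidon set $A \subseteq [n]$ avoiding $x_1+x_2+x_3+x_4 = 4x_5$ (in distinct variables) as a $5$-partite graph to which the sparse $5$-cycle removal lemma (Theorem \ref{thm:5-cycle-rem}) applies, mirroring the standard deduction of Roth's theorem from the triangle removal lemma. First I would set up a Cayley-type construction: take $V_1,\dots,V_5$ to be copies of a suitable interval (or cyclic group) of size $\asymp n$, and put an edge between $v \in V_i$ and $w \in V_{i+1}$ precisely when $w - v$ (with appropriate integer coefficients chosen so that a $5$-cycle closes up iff the five ``jumps'' sum, with the right weights, to a solution of $x_1+x_2+x_3+x_4=4x_5$) lies in $A$. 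The weighting must be designed so that: (i) a $5$-cycle through fixed vertices is equivalent to prescribing five elements of $A$ satisfying the equation, so that genuine solutions $x_1=\dots=x_5$ (the ``diagonal'') give $\asymp |A| n$ edge-disjoint $5$-cycles; and (ii) the hypotheses (a) and (b) of Theorem \ref{thm:5-cycle-rem} hold. For (a), a $C_4$ using only two of the parts corresponds to a nontrivial additive relation among few elements of $A$, which is controlled because $A$ is a Sidon set; hence there are only $O(n^2)$ (indeed $o(\delta n^2)$ after a suitable normalization) such $C_4$'s. For (b), the number of $C_5$'s is $O(|A|^4 n)$ by a direct count (fixing a base vertex and four of the five consecutive differences determines everything, and the fifth difference is then forced to lie in $A$), which together with the no-solutions-in-distinct-variables hypothesis leaves only the ``degenerate'' $5$-cycles where some variables coincide; these number $O(|A|^3 n) = O(|A| \cdot |A|^2 n)$, so if $|A| = \omega(\sqrt n)$ this is $o(|A| n^{3/2})$ while the diagonal forces $\gtrsim |A| n$ edge-disjoint copies — applying the removal lemma, we would need to delete $\gtrsim |A| n$ edges, contradicting the $\epsilon n^{3/2}$ conclusion once $|A| n \gg n^{3/2}$, i.e.\ $|A| \gg \sqrt n$. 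The main obstacle here is the bookkeeping in choosing the coefficients on the five edge-relations so that \emph{all} of (i), (ii)(a), (ii)(b), and the edge-disjointness of the diagonal copies hold simultaneously; in particular one must check that the contribution of $C_5$'s with coincident variables is genuinely of lower order, which is where the Sidon hypothesis is used a second time.

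\medskip

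\emph{Lower bound.} For the $n^{1/2-o(1)}$ lower bound, the plan is to exhibit a large Sidon set with no solution to $x_1+x_2+x_3+x_4=4x_5$ in distinct variables. A natural candidate is a dense Sidon set (e.g.\ a Singer or Erd\H{o}s--Tur\'an type set of size $(1+o(1))\sqrt n$) intersected with, or refined by, a Behrend-type construction that avoids the linear equation; alternatively, one can directly build a Sidon set inside a Behrend set. Concretely, I would take a Behrend set $B \subseteq [m]$ of size $m^{1-o(1)}$ with no nontrivial solution to $x_1+x_2+x_3+x_4 = 4x_5$ (such sets exist: Behrend's construction, placing points on a sphere, kills all ``averaging'' equations of this shape), and then pass to a Sidon subset of $B$. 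Since any set of size $N$ contains a Sidon subset of size $\gtrsim \sqrt N$, this already gives a Sidon, equation-free set of size $\gtrsim m^{1/2 - o(1)} = n^{1/2-o(1)}$ after rescaling into $[n]$; one checks that passing to a subset preserves both the Sidon property and the absence of solutions. The only subtlety is ensuring the Behrend set is genuinely free of solutions \emph{in distinct variables} to this particular equation — for the specific equation $x_1 + x_2 + x_3 + x_4 = 4x_5$, a solution on a sphere $\{x : \|x\|_2 = \rho\}$ forces $x_1 = x_2 = x_3 = x_4 = x_5$ by strict convexity (the point $x_5$ would be the centroid of four sphere points, hence inside the ball unless all coincide), so the construction works verbatim. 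I expect the upper bound to be the substantially harder direction; the lower bound is a routine combination of Behrend and ``every set has a large Sidon subset.''
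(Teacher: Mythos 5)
Your upper-bound architecture is the same as the paper's (which routes through \cref{thm:transl-invar}, itself proved by exactly the Cayley-graph-plus-\cref{thm:5-cycle-rem} argument you sketch), but there is a genuine quantitative gap in your verification of hypothesis (b) of \cref{thm:5-cycle-rem}. You bound the degenerate $5$-cycles --- those arising from solutions with coincident variables --- by $O(|A|^3 n)$, which is only the trivial count: identifying two variables leaves a four-variable equation $b_1y_1+\cdots+b_4y_4=0$ with nonzero coefficients, and choosing three of the $y_i$ freely determines the fourth. In the critical regime $|A| \asymp \sqrt{n}$ this gives $\Theta(n^{5/2})$ copies of $C_5$, which does \emph{not} satisfy the hypothesis ``at most $\delta n^{5/2}$ copies'' for the small $\delta$ that the removal lemma demands; moreover, your comparison against ``$o(|A|n^{3/2})$'' is against the wrong benchmark, since the relevant threshold in \cref{thm:5-cycle-rem} is $\delta n^{5/2}$. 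What is needed, and what the paper supplies as \cref{lem:non-distinct}, is that a Sidon set $X \subseteq [n]$ has only $O(n)=O(|X|^2)$ solutions to \emph{any} fixed four-variable linear equation with nonzero coefficients; this follows from H\"older on the Fourier side together with the identity $\int_0^1 |\widehat{1_X}(a_jt)|^4\,dt = 2|X|^2-|X| = O(n)$ for Sidon sets. With that improvement the degenerate $C_5$ count drops to $O(n^2)=o(n^{5/2})$ and the argument closes. You do remark that the Sidon hypothesis ``is used a second time'' at this point, but the bound you actually write down does not use it and is insufficient.

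For the lower bound, your route works but leans on a heavier tool than necessary: the claim that every $N$-element set of integers contains a Sidon subset of size $\gtrsim \sqrt{N}$ is the Koml\'os--Sulyok--Szemer\'edi theorem, not a routine fact (the naive greedy or deletion argument only yields $N^{1/3}$). The paper instead intersects a Sidon set of size $(1+o(1))\sqrt{n}$ with a \emph{random translate} of the Behrend-type set: translation invariance of $x_1+x_2+x_3+x_4=4x_5$ preserves solution-freeness, and averaging over translates gives expected intersection size $n^{1/2-o(1)}$. Your strict-convexity justification that Behrend's construction kills this particular equation is correct.
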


In other words, we are simultaneously avoiding
\begin{itemize}
	\item[(a)] nontrivial solutions to the Sidon equation $x_1 + x_2 = x_3 + x_4$ and
	\item[(b)] distinct variable solutions to the linear equation $x_1 + x_2 + x_3 + x_4 = 4 x_5$.
\end{itemize}
There exist Sidon sets of size $(1+o(1))\sqrt{n}$, as well as
sets of size $n^{1-o(1)}$ avoiding (b) 
(by a standard modification of Behrend's construction~\cite{Beh46} of large sets without 3-term arithmetic progressions). 
However, \cref{thm:avoid-two-eqn1} shows that 
by simultaneously avoiding nontrivial solutions to both equations, the maximum size is substantially reduced. 

This is the first example showing a lack of ``compactness'' for linear equations. In extremal graph theory, 
	the \emph{Erd\H{o}s--Simonovits compactness conjecture \cite{ES82}} is a well-known conjecture saying that, 
	for every finite set $\mathcal F$ of graphs, $\ex(n, \mathcal F) \ge c_{\mathcal F} \min_{F \in \mathcal F} \ex(n, F)$ for some constant $c_{\mathcal F} > 0$. 
The analogous statement is false for $r$-graphs with $r \ge 3$ (by the Ruzsa--Szemer\'edi theorem and a simple generalisation to $r$-graphs noted in \cite[Theorem 1.9]{EFR86}), but remains open for graphs. Our \cref{thm:avoid-two-eqn1} shows that it also fails for linear equations. 

\cref{thm:avoid-two-eqn1} also sheds some light on the fascinating open problem (see, for example, Gowers' blog post~\cite{Gow12bl}) of understanding the structure of Sidon sets with near-maximum size, say within a constant factor of $\sqrt{n}$, showing that any such set must contain five distinct elements with one of them being the average of the others. More generally, we have the following result, showing that a large Sidon set must contain solutions to a wide family of translation-invariant linear equations in five variables. We note that the lower bound of $n^{1/2-o(1)}$ simply comes from intersecting a Sidon set of set $(1+o(1))n^{1/2}$ with a random translate of a subset of $[n]$ of size $n^{1 - o(1)}$ that avoids nontrivial solutions to \cref{eq:4avg} (which again exists by a standard modification of Behrend's construction~\cite{Beh46}).

\begin{theorem}
	\label{thm:avoid-two-eqn2} 
	Fix positive integers $a_1, \dots, a_4$.
	The maximum size of a Sidon subset of $[n]$ without a solution in distinct variables to the equation
	\begin{equation}
		\label{eq:4avg}
		a_1x_1+a_2x_2+a_3x_3+a_4x_4=(a_1+a_2+a_3+a_4)x_5
	\end{equation}
	is at most $o(\sqrt{n})$ and at least $n^{1/2 - o(1)}$. 
\end{theorem}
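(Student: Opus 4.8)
The upper bound $o(\sqrt n)$ is the substantive claim; the lower bound $n^{1/2-o(1)}$ is noted in the surrounding text to follow by intersecting a near-maximum Sidon set with a random translate of a large $(\ref{eq:4avg})$-free set obtained from Behrend's construction, so I will focus on the upper bound. The plan is to reduce the statement to \cref{thm:5-cycle-rem} exactly as in the deductions of \cref{thm:avoid-eqn1} and \cref{thm:avoid-two-eqn1}. Let $A \subseteq [n]$ be a Sidon set with no solution to $(\ref{eq:4avg})$ in distinct variables, and suppose $|A| \ge \epsilon \sqrt n$; I will derive a contradiction for $n$ large. Build a $5$-partite graph $G$ on vertex classes $V_1, \dots, V_5$, each a copy of a suitable interval of integers (of length $\Theta(n)$, chosen large enough that all the linear forms below stay in range), with edges placed between consecutive classes $V_i, V_{i+1}$ (indices mod $5$) according to a Schur-like / Ruzsa-type encoding tailored to the coefficient vector $(a_1, a_2, a_3, a_4, -(a_1+a_2+a_3+a_4))$: the edges between $V_i$ and $V_{i+1}$ should be defined by linear equations in the class coordinates and a parameter ranging over $A$, arranged so that a $5$-cycle in $G$ corresponds precisely to a $5$-tuple $(x_1, \dots, x_5) \in A^5$ satisfying $(\ref{eq:4avg})$, with the "diagonal" $5$-cycles (those using the same value of $A$ on a prescribed pair of edges) corresponding to the trivial/non-distinct solutions. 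Concretely one sets up the partial sums of the equation as the vertex coordinates so that going around the cycle forces the defining equation; this is the standard translation of a single linear equation in $k$ variables into $C_k$-counting in a $k$-partite graph used in additive combinatorics.

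The next step is to verify the two hypotheses of \cref{thm:5-cycle-rem} for this $G$. For hypothesis (a), few $4$-cycles between two parts: a $C_4$ using vertices in only two classes $V_i, V_{i+1}$ corresponds (after cancelling the class coordinates) to a relation of the form $s_1 - s_2 + s_3 - s_4 = 0$ among four elements $s_1, s_2, s_3, s_4 \in A$ coming from the two edge-labels, i.e.\ a Sidon-type equation $s_1 + s_3 = s_2 + s_4$; since $A$ is a Sidon set, every such relation is trivial, so the number of such $C_4$'s is $O(n \cdot |A|) = O(n^2) \cdot o(1)$ — in fact $O(n^{3/2})$ — which is certainly $\le \delta n^2$ for $n$ large. (One must also handle the bookkeeping that $|V_i| = \Theta(n)$ rather than exactly $n$, by rescaling, and that there are only a bounded number of choices of the pair of classes.) For hypothesis (b), few $5$-cycles: by the correspondence, the number of $C_5$'s in $G$ is at most a constant times the number of solutions to $(\ref{eq:4avg})$ with $x_1, \dots, x_5 \in A$, counting trivial ones. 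The non-distinct solutions: because $A$ is Sidon one shows that fixing which variables coincide pins down the rest up to $O(|A|)$ choices, so there are $O(|A|^2) = O(n)$ of these; and the distinct-variable solutions number zero by hypothesis. Hence $G$ has $O(n) = o(n^{5/2})$ copies of $C_5$. Now \cref{thm:5-cycle-rem} applies: $G$ can be made $C_5$-free by deleting at most $\epsilon' n^{3/2}$ edges for a suitable small $\epsilon'$. But $G$ has $\Theta(|A| \cdot n) = \Theta(\epsilon n^{3/2})$ edges, and — this is the crucial rigidity input — every edge of $G$ lies in at least one $C_5$ (indeed the encoding is chosen so that each edge extends uniquely, or at least in a bounded-below number of ways, to a $5$-cycle, giving a family of essentially edge-disjoint $C_5$'s of size $\Theta(\epsilon n^{3/2})$). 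Therefore destroying all $C_5$'s requires deleting $\Omega(\epsilon n^{3/2})$ edges, contradicting the removal lemma once $\epsilon'$ is chosen smaller than this implied constant. This forces $|A| = o(\sqrt n)$.

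The main obstacle, and the step deserving the most care, is designing the $5$-partite graph $G$ so that simultaneously: (i) the $C_5$'s biject (up to bounded multiplicity) with solutions of $(\ref{eq:4avg})$, including the correct accounting of trivial solutions; (ii) the two-part $C_4$-count is controlled purely by the Sidon property of $A$ — this is exactly why the hypothesis is on \emph{Sidon} subsets rather than arbitrary subsets, and it is what makes $(\ref{eq:4avg})$ (a translation-invariant five-variable equation) fit the framework even though the bare equation, unlike $(\ref{eq:avoid-eqn1})$, does not by itself preclude all degenerate $C_4$'s; and (iii) $G$ has $\Theta(|A| n)$ edges each lying in a $C_5$, so the removal conclusion has bite. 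Getting (ii) right means choosing the edge-encodings between the five pairs of classes so that any two-class $4$-cycle collapses to an honest Sidon equation $s_1 + s_2 = s_3 + s_4$ in the labels; once the encoding is fixed this is a routine linear-algebra check, but the choice of encoding is the creative part. The arguments of \cref{thm:avoid-eqn1} and \cref{thm:avoid-two-eqn1} give the template — the only new point here is carrying out the construction for a general coefficient vector $(a_1, \dots, a_4)$, which affects only the explicit linear forms and the sizes of the intervals used for $V_1, \dots, V_5$, not the structure of the argument.
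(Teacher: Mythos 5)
Your plan follows essentially the same route as the paper: the paper deduces the upper bound from a general statement (\cref{thm:transl-invar}) whose proof is exactly your construction — the $5$-partite Cayley-type graph on copies of $\ZZ/N\ZZ$ with edges $(s,s+a_ix_i)$ between $V_i$ and $V_{i+1}$ — followed by an application of \cref{thm:5-cycle-rem}, with two-part $C_4$'s controlled by the Sidon property and the contradiction supplied by the $N$ edge-disjoint $5$-cycles coming from the trivial solutions $x_1=\cdots=x_5$. So structurally you are on target, and your direct "edge-disjoint trivial cycles" contradiction is equivalent to the paper's observation that the all-equal solution forces every surviving element to be deleted.

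The one step where your justification does not hold up as written is the count of non-distinct solutions to \cref{eq:4avg} in a Sidon set. Your mechanism — "fixing which variables coincide pins down the rest up to $O(|A|)$ choices" — is not correct: after fixing, say, $x_1=x_2$ and its common value, you are left with a genuine three-variable linear equation whose solution count in $A$ is not $O(|A|)$ for free. The right statement is that a Sidon subset of $[n]$ has $O(n)$ solutions to \emph{any} four-variable equation $b_1y_1+\cdots+b_4y_4=0$ with nonzero coefficients (apply this to each coincidence pattern, e.g.\ $x_1=x_2$ gives coefficients $(a_1+a_2,a_3,a_4,-(a_1+\cdots+a_4))$, all nonzero since the $a_i$ are positive). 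This is the paper's \cref{lem:non-distinct}, proved by the Fourier identity, H\"older, and the fact that the additive energy of a Sidon set is $2|X|^2-|X|=O(n)$; an equivalent purely combinatorial Cauchy--Schwarz/energy argument also works. With that lemma in place (and noting that even $o(n^{3/2})$ would suffice for hypothesis (b) of \cref{thm:5-cycle-rem}), the rest of your argument goes through.
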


Similarly, \cref{thm:avoid-eqn1} is a special case of the following statement.

\begin{theorem}
	\label{thm:avoid-eqn2}
	Fix positive integers $a$ and $b$.
	Every subset of $[n]$ without a nontrivial solution to the equation
	\begin{equation}\label{eq:avoid-eqn2}
		a x_1 + a x_2 + b x_3 = a x_4 + (a+b) x_5
	\end{equation}
	has size $o(\sqrt{n})$. Here a trivial solution is one of the form $(x_1, \dots, x_5) = (x,y,y,x,y)$ or $(y,x,y,x,y)$ (or $(y, y, x, x, y)$ if $a = b$) for some $x,y \in \ZZ$.
\end{theorem}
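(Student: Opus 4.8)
The plan is to apply the Ruzsa--Szemer\'edi method: to a hypothetical large set $A \subseteq [n]$ with no nontrivial solution to \cref{eq:avoid-eqn2} we attach an auxiliary $5$-partite graph whose $5$-cycles encode solutions of \cref{eq:avoid-eqn2}, and then invoke the $5$-partite $5$-cycle removal lemma, \cref{thm:5-cycle-rem}. Write the equation as $c_1x_1+\dots+c_5x_5=0$ with $(c_1,\dots,c_5)=(a,a,b,-a,-(a+b))$ and note $\sum_i c_i=0$. By Bertrand's postulate fix a prime $N$ with $(4a+2b)n<N<3(4a+2b)n$; then $a$, $b$, $a+b$ are units modulo $N$ and $\abs{\sum_i c_ix_i}<N$ for all $x_i\in[n]$. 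Let $G$ have vertex classes $V_1,\dots,V_5$, each a copy of $\ZZ_N$, with $u\in V_i$ joined to $w\in V_{i+1}$ (indices mod $5$) precisely when $w-u\in c_iA$. Then $G$ fits the template of \cref{thm:5-cycle-rem} with part size $N\asymp_{a,b}n$, and reading off the differences along any $5$-cycle $(v_1,\dots,v_5)\in V_1\x\dots\x V_5$ and summing, the closure condition becomes $\sum_i c_ix_i=0$ in $\ZZ_N$, hence (by the size bound) in $\ZZ$; conversely each solution $(x_1,\dots,x_5)\in A^5$ together with a choice of $v_1\in\ZZ_N$ yields exactly one $5$-cycle. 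So $G$ has exactly $N$ times as many copies of $C_5$ as \cref{eq:avoid-eqn2} has solutions in $A^5$.

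The first substantive point is that $A$ must be a Sidon set: if $p+q=r+s$ with $p,q,r,s\in A$ and $\set{p,q}\ne\set{r,s}$, then, after relabelling so that $p\notin\set{r,s}$, the tuple $(p,q,s,r,s)$ solves \cref{eq:avoid-eqn2} and is not of any of the declared trivial shapes, a contradiction; in particular $\abs{A}=O(\sqrt n)$. This Sidon property is exactly what makes hypothesis (a) of \cref{thm:5-cycle-rem} hold, in the strongest possible form. Indeed, that hypothesis concerns only copies of $C_4$ with vertices in two different parts, and any such $C_4$ lies within $V_i\cup V_{i+1}$ for adjacent $i$; but $c_iA$ is a Sidon subset of $\ZZ_N$ (since $A$ is Sidon, $N>2n$, and $c_i$ is a unit), and a $C_4$ in the bipartite Cayley-type graph between $V_i$ and $V_{i+1}$ would force a nontrivial additive quadruple in $c_iA$, which is impossible. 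So $G$ has no relevant $C_4$ at all. For hypothesis (b), the only solutions of \cref{eq:avoid-eqn2} in $A^5$ are the $O(\abs{A}^2)=O(n)$ trivial ones, so $G$ has $O(nN)=O(N^2)=o(N^{5/2})$ copies of $C_5$; hence both hypotheses hold once $n$ is large in terms of the relevant parameter.

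It remains to derive a contradiction when $\abs{A}$ is large. Let $\mathcal C$ consist of the $5$-cycles of $G$ arising from trivial solutions $(x,y,y,x,y)$ with $x\ne y$; there are $N\abs{A}(\abs{A}-1)$ of these, and a short case check over the five classes (using that $a,b,a+b$ are units mod $N$) shows that each edge of $G$ lies in at most $\abs{A}$ members of $\mathcal C$. Hence any edge set whose deletion makes $G$ free of $C_5$---in particular, destroys every cycle of $\mathcal C$---has size at least $\abs{\mathcal C}/\abs{A}\ge N(\abs{A}-1)$. On the other hand, given $\epsilon>0$, \cref{thm:5-cycle-rem} provides $\delta>0$ such that, once $n$ is large enough that $G$ has at most $\delta N^2$ two-part copies of $C_4$ (here, zero) and at most $\delta N^{5/2}$ copies of $C_5$, we can make $G$ $C_5$-free by deleting at most $\epsilon N^{3/2}$ edges. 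Combining, $N(\abs{A}-1)\le\epsilon N^{3/2}$, so $\abs{A}\le\epsilon N^{1/2}+1\lesssim_{a,b}\epsilon\sqrt n$ with an implied constant depending only on $a,b$. Since $\epsilon>0$ is arbitrary, $\abs{A}=o(\sqrt n)$.

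I expect the only delicate part to be the parameter bookkeeping in this last step, rather than any individual estimate. The key is that $N$ must be chosen as a fixed multiple of $n$ (depending only on $a,b$), independently of $\epsilon$ and of the removal-lemma constant $\delta=\delta(\epsilon)$; only then does $\abs{A}\lesssim_{a,b}\epsilon\sqrt n$ genuinely upgrade to $o(\sqrt n)$. This is possible precisely because the Sidon property forces hypothesis (a) to hold exactly, and because hypothesis (b)---which only asks for $o(N^{5/2})$ five-cycles---is automatic for large $n$ since $G$ has merely $O(N^2)$ of them; were $A$ allowed to be non-Sidon, the $\Theta(N^2)$ two-part four-cycles that would then appear could not be absorbed into $\delta N^2$ for a fixed multiple $N$ of $n$, and the argument would collapse.
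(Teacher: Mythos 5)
Your proposal is correct and takes essentially the same route as the paper: the paper deduces this theorem from \cref{thm:transl-invar}, whose proof uses exactly your Cayley-type $5$-partite graph over $\ZZ/N\ZZ$ together with \cref{thm:5-cycle-rem}, after the same observation (setting $x_3=x_5$) that the set must be Sidon. The only cosmetic difference is the final step: the paper deletes an element when at least $N/5$ of its edges are removed and then invokes the diagonal solutions $x_1=\cdots=x_5$, whereas you lower-bound the number of deletions needed to hit the trivial off-diagonal $5$-cycles directly; both give $\abs{A}\lesssim_{a,b}\epsilon\sqrt{n}$.
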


Both \cref{thm:avoid-eqn2} and the upper bound in \cref{thm:avoid-two-eqn2} are special cases of the following more robust theorem (applied with $X_1 = \cdots = X_5$), whose proof can be found in \cref{sec:pf-nt}.
Indeed, to prove the upper bound in \cref{thm:avoid-two-eqn2}, we apply \cref{lem:non-distinct} below to check that every Sidon subset of $[n]$ contains $O(n)$ solutions to \cref{eq:4avg} where not all variables are distinct, thereby verifying hypothesis (b) of \cref{thm:transl-invar} (with a $O(n)$ bound instead of $o(n^{3/2})$). To prove \cref{thm:avoid-eqn2}, we note, by setting $x_3 = x_5$ in \cref{eq:avoid-eqn2}, that the subset satisfying the hypothesis of \cref{thm:avoid-eqn2} must be a Sidon set. Finally, when $X_1 = \cdots = X_5$, the removal statement in the conclusion of \cref{thm:transl-invar} implies that $\abs{X_1} = o(\sqrt{n})$ since $x_1 = \cdots = x_5$ is always a solution due to $a_1 + \cdots + a_5 = 0$.

\begin{theorem}
	\label{thm:transl-invar} 
	Fix nonzero integers $a_1, \dots, a_5$ with $a_1 + \cdots + a_5 = 0$. 
	Suppose that $X_1, \dots, X_5$ are subsets of $[n]$ satisfying
	\begin{itemize}
		\item[(a)] each $X_i$ has $o(n)$ nontrivial solutions to $x_1 + x_2 = x_3 + x_4$ \\ (a trivial solution here is one with $(x_1, x_2) = (x_3, x_4)$ or $(x_4,x_3)$) and 
		\item[(b)] $o(n^{3/2})$ solutions to $a_1 x_1 + \cdots + a_5 x_5 = 0$ with $x_1 \in X_1$, \dots, $x_5 \in X_5$.
	\end{itemize}
	Then one can remove $o(\sqrt{n})$ elements from each $X_i$ to remove all solutions to $a_1x_1+\cdots+a_5x_5=0$ with $x_1 \in X_1$, \dots, $x_5 \in X_5$.
\end{theorem}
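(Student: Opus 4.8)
The plan is to run the Ruzsa--Szemer\'edi scheme for turning a graph removal lemma into an arithmetic one, with \cref{thm:5-cycle-rem} playing the role that the triangle removal lemma plays for three-variable equations. Fix a modulus $N$ with $N\asymp n$ and $N>2(|a_1|+\cdots+|a_5|)n$, and build a graph $G$ on vertex classes $V_1,\dots,V_5$, each a copy of $\ZZ/N\ZZ$: for each $i$ (indices taken mod $5$), join $u\in V_i$ to $w\in V_{i+1}$ precisely when $w-u\equiv a_i x_i\pmod N$ for some $x_i\in X_i$. The size condition on $N$ guarantees that every mod-$N$ identity among the $a_i x_i$ with small coefficients that we use below in fact holds over $\ZZ$; in particular, for each fixed $x\in X_i$ the set of edges it contributes between $V_i$ and $V_{i+1}$ is a perfect matching $M_{i,x}$ of size $N$, and the matchings $M_{i,x}$ are pairwise edge-disjoint as $x$ ranges over $X_i$ and as $i$ ranges over $\{1,\dots,5\}$.

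The point of the construction is a pair of correspondences. First, a closed walk $v_1v_2\cdots v_5v_1$ with $v_j\in V_j$ and $v_{j+1}-v_j\equiv a_j x_j$ telescopes to $\sum_j a_j x_j\equiv 0\pmod N$, hence, by the choice of $N$, to an honest solution; conversely, a solution together with a choice of basepoint $v_1\in V_1$ produces such a walk, and since the five vertices lie in five distinct classes the walk is automatically a genuine $C_5$. Moreover, \emph{every} copy of $C_5$ in $G$ arises this way: its five vertices project to a closed walk of length $5$ in the cycle $C_5$ on the classes, which must use five $\pm 1$ steps summing to $0$ modulo $5$ and therefore all equal, so the $C_5$ winds once around and meets each class exactly once. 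Hence the number of copies of $C_5$ in $G$ is at most $N$ times the number of solutions $(x_1,\dots,x_5)\in X_1\times\cdots\times X_5$ of $\sum a_i x_i=0$, which by hypothesis~(b) is $N\cdot o(n^{3/2})=o(N^{5/2})$. Second, a copy of $C_4$ lying in only two classes must lie in two consecutive classes $V_i,V_{i+1}$ (there are no edges within a class or between non-consecutive ones), and unwinding its four edge conditions shows that the number of such $C_4$'s is at most $N$ times the total number of nontrivial solutions of $x_1+x_2=x_3+x_4$ in the sets $X_1,\dots,X_5$, which by hypothesis~(a) is $N\cdot o(n)=o(N^2)$. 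Consequently, given any $\epsilon>0$, for all sufficiently large $n$ the graph $G$ meets the hypotheses of \cref{thm:5-cycle-rem} with vertex-class size $N$, so there is a set $D$ of at most $\epsilon N^{3/2}$ edges whose removal makes $G$ $C_5$-free.

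The final step converts the edge removal into the removal of few elements from the $X_i$. Set $Y_i=\{x\in X_i:|M_{i,x}\cap D|\ge N/6\}$. Since the matchings $M_{i,x}$ are pairwise disjoint, $\sum_i|Y_i|\cdot(N/6)\le|D|\le\epsilon N^{3/2}$, so each $|Y_i|\le 6\epsilon\sqrt N=O(\epsilon\sqrt n)$. Suppose $(x_1,\dots,x_5)$ were a solution with every $x_i\in X_i\setminus Y_i$. For each basepoint $v_1\in V_1$ it yields a copy of $C_5$ whose $i$-th edge lies in $M_{i,x_i}$, and as $v_1$ ranges over $V_1$ this $i$-th edge ranges bijectively over all of $M_{i,x_i}$, so it lies in $D$ for fewer than $N/6$ choices of $v_1$; a union bound over the five edges then leaves more than $N-5\cdot N/6>0$ choices of $v_1$ for which the whole $C_5$ avoids $D$, contradicting the $C_5$-freeness of $G-D$. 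Hence removing $Y_i$ from $X_i$ for each $i$ destroys every solution, and choosing $\epsilon$ small enough (and $n$ large, with the $o(\cdot)$ in hypotheses~(a) and (b) feeding the threshold $\delta$ required by \cref{thm:5-cycle-rem}) makes each $|Y_i|=o(\sqrt n)$, which is the conclusion. The step that really needs care is precisely this averaging argument and the attendant bookkeeping of the parameters $\epsilon$, $\delta$, and $N$; by contrast, the potential worry about solutions with repeated values among the $x_i$ --- in particular the diagonal solution $x_1=\cdots=x_5$, which always exists because $\sum a_i=0$ --- causes no trouble here, since each variable is carried by its own vertex class and the ``winds once'' description shows there are no spurious copies of $C_5$ to handle.
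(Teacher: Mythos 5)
Your proposal is correct and follows essentially the same route as the paper: the same $5$-partite Cayley-type graph over $\ZZ/N\ZZ$ with edges $(s,s+a_ix_i)$, the same verification of the two hypotheses of \cref{thm:5-cycle-rem} via the correspondences with Sidon-equation solutions and with solutions of $\sum a_ix_i=0$, and the same thresholding/averaging step to convert the removed edge set into removed elements. The only differences are cosmetic: you take $N$ somewhat larger to avoid the paper's coprimality condition on $N$, use the threshold $N/6$ in place of $N/5$, and spell out the (correct) winding argument showing every $C_5$ in $G$ meets each class exactly once.
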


\begin{lemma} \label{lem:non-distinct}
Let $a_1,\dots,a_4$ be nonzero integers and $X$ be a Sidon subset of $[n]$. 
Then $X$ contains $O(n)$ solutions to the equation $a_1 x_1 + a_2 x_2 + a_3 x_3  + a_4 x_4 = 0$.
\end{lemma}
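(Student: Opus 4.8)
The plan is a Cauchy--Schwarz argument that reduces counting solutions to a second-moment estimate on each half of the equation, after which the Sidon property finishes the job directly.

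First I would split the equation across its two halves. For $m \in \ZZ$ set
\[
r(m) = \#\set{(x_1,x_2) \in X^2 : a_1 x_1 + a_2 x_2 = m}, \qquad s(m) = \#\set{(x_3,x_4) \in X^2 : a_3 x_3 + a_4 x_4 = -m}.
\]
The number of solutions to be bounded is $\sum_m r(m)s(m)$, which by Cauchy--Schwarz is at most $\paren{\sum_m r(m)^2}^{1/2}\paren{\sum_m s(m)^2}^{1/2}$. So it suffices to show $\sum_m r(m)^2 = O(n)$, and, by the same reasoning, $\sum_m s(m)^2 = O(n)$; I will only treat $r$.

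Next I would rewrite $\sum_m r(m)^2$ as the number of quadruples $(x_1,x_2,x_1',x_2') \in X^4$ with $a_1 x_1 + a_2 x_2 = a_1 x_1' + a_2 x_2'$, i.e.\ with $a_1(x_1 - x_1') = a_2(x_2' - x_2)$, and split on the value $d := x_1 - x_1'$. If $d = 0$, then $x_2 = x_2'$ as well (since $a_2 \ne 0$), contributing at most $\abs{X}^2$ quadruples; since a Sidon subset of $[n]$ has $\abs{X} = O(\sqrt n)$, this is $O(n)$. If $d \ne 0$, then $d$ takes at most $2n+1$ values in $[-n,n]$, and once $d$ is fixed the required difference $x_2' - x_2 = a_1 d / a_2$ is determined; the Sidon property gives at most one ordered pair in $X^2$ realizing the difference $d$ and at most one realizing the difference $a_1 d/a_2$, hence at most one quadruple per nonzero $d$. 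Summing the two cases yields $\sum_m r(m)^2 = O(n)$, and the lemma follows.

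There is no serious obstacle; the only things to be careful about are (i) invoking the standard fact that a Sidon set in $[n]$ has $O(\sqrt n)$ elements, so that the diagonal term $\abs{X}^2$ really is $O(n)$, and (ii) the exact form of the Sidon property used, namely that each nonzero difference is realized by at most one ordered pair from $X$ (equivalently, all pairwise sums are distinct). The argument uses nothing about the $a_i$ beyond $a_i \ne 0$, and the bound $O(n)$ is best possible in general, as the case $a_1 = -a_2$, $a_3 = -a_4$ already produces $\Omega(n)$ solutions with $x_1 = x_2$ and $x_3 = x_4$.
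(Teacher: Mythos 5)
Your argument is correct, and it reaches the same bound as the paper by a genuinely different (and more elementary) route. The paper works on the Fourier side: it writes the solution count as $\int_0^1 \wh 1_X(a_1t)\wh 1_X(a_2t)\wh 1_X(a_3t)\wh 1_X(a_4t)\,dt$, applies H\"older to split this into four fourth moments $\int_0^1 |\wh 1_X(a_jt)|^4\,dt$, and observes that each such moment is exactly the number of solutions to the coefficient-free equation $x_1+x_2=x_3+x_4$, namely $2|X|^2-|X|=O(n)$ for a Sidon set. Your Cauchy--Schwarz on $\sum_m r(m)s(m)$ is the physical-space analogue of that H\"older step, but because you split into two halves rather than four, the resulting second moment $\sum_m r(m)^2$ counts solutions to the \emph{mixed} equation $a_1(x_1-x_1')=a_2(x_2'-x_2)$, which is not directly the Sidon energy; you handle it correctly via the observation that each nonzero difference is realized by at most one ordered pair, plus the size bound $|X|=O(\sqrt n)$ for the diagonal term. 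What the Fourier route buys is that the coefficients $a_j$ disappear immediately (each fourth moment is the plain additive energy, with no divisibility bookkeeping); what your route buys is that it avoids exponential sums entirely and makes explicit exactly which consequence of the Sidon property is used. Both proofs are complete; the only point to state carefully in a write-up of yours is the equivalence you flag in (ii), that the Sidon condition on sums implies uniqueness of representations of each nonzero difference.
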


Since our proofs rely on the graph removal lemma, they give poor quantitative bounds, the best bound on that lemma~\cite{Fox11} having tower-type dependencies. However, for our number-theoretic applications, it is possible to use the best bounds for the relevant Roth-type theorem, together with a weak arithmetic regularity lemma and our $C_5$-counting lemma to obtain reasonable bounds. This is similar in spirit to the arithmetic transference proof of the relative Szemer\'edi theorem given in~\cite{Zha14}, though we omit the details. 
A follow-up work of Prendiville~\cite{Pre} giving a Fourier-analytic proof of our number-theoretic results also yields comparable bounds.

As a final remark, we note that the results of this subsection carry over essentially verbatim to arbitrary abelian groups. Following Kr\'al'--Serra--Vena~\cite{KSV09} (see also~\cite{KSV12,Sha10}), one may also use our sparse graph removal lemma to derive a sparse removal lemma that is meaningful in arbitrary groups. We again omit the details, but refer the interested reader to~\cite[Theorem~1.2]{CFZ14} for a result which is similar in flavor.

\section{A weak sparse regularity lemma} \label{sec:regularity}

In this section, we develop a sparse version of the Frieze--Kannan weak regularity lemma~\cite{FK99}.
A sparse version of Szemer\'edi's regularity lemma was originally developed by Kohayakawa \cite{Koh97} and R\"odl (see \cite{GS05}) under an additional ``no dense spots'' hypothesis, but Scott~\cite{Sco11} showed that, with a slight variation in the statement, this additional hypothesis is not needed. The approach we use here for proving a sparse version of the weak regularity lemma will be similar to that of Scott.
With this result (and an appropriate counting lemma) in hand, we will then be able to ``transfer'' the removal lemma from the dense setting to the sparse setting.

In order to give an analytic formulation of the weak regularity lemma, we first make some definitions.
Given a pair of probability spaces $V_1$ and $V_2$, 
 	which are usually vertex sets with the uniform measure 
 	(or, if the vertices carry weights, then with the probability measure that is proportional to the vertex weights), 
 	we define the \emph{cut norm} for a measurable function $f \colon V_1 \times V_2 \to \RR$ 
 	(we will sometimes omit mentioning the measurability requirement when it is clear from context) 
 	by 
\begin{equation} \label{eq:cut-set}
\norm{f}_\square := 
\sup_{\substack{A \subset V_1 \\ B \subset V_2}} \abs{\EE_{x \in V_1, y \in V_2} f(x,y)1_A(x) 1_B(y)},
\end{equation}
where $A$ and $B$ range over all measurable subsets and $x$ and $y$ are chosen independently according to the corresponding probability measures.

Given a partition $\cP$ of some probability space $V$ and a function $f \colon V \times V \to \RR$, we write $f_\cP \colon V \times V \to \RR$ for the function obtained from $f$ by ``averaging'' over blocks $A \times B$ where $A$ and $B$ are parts of $\cP$, i.e., $f_{\cP}(x,y) = \frac{1}{\mu(A)\mu(B)} \int_{A \times B} f$ for all $(x,y) \in A\times B$, where $\mu( \cdot )$ is the probability measure on $V$. We may ignore zero-measure parts.

The weak regularity lemma of Frieze and Kannan may be rephrased in the following way (for example, see \cite[Corollary 9.13]{Lov12}), saying that all bounded functions can be approximated in terms of the cut norm by a step function with a bounded number of blocks. Furthermore, the step function can be obtained by averaging the original function over steps. This analytic perspective on the weak regularity lemma has been popularized by the development of graph limits~\cite{LS07}.

\begin{theorem}[Weak regularity lemma, dense setting] 
Let $\epsilon > 0$. Let $V$ be a probability space and $f \colon V \times V \to [0,1]$ be a measurable symmetric function (i.e., $f(x,y) = f(y,x)$ for all $x,y \in V$). Then there exists a partition $\cP$ of $V$ into at most $2^{O(\epsilon^{-2})}$ parts such that
\[
	\norm{f - f_\cP}_\square \le \epsilon.
\]
\end{theorem}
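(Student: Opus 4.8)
The plan is to obtain $\cP$ by an iterative energy-increment argument, the standard template for weak regularity lemmas. Start with the trivial partition $\cP_0 = \{V\}$. At stage $i$, if $\norm{f - f_{\cP_i}}_\square > \epsilon$, then by definition of the cut norm there are measurable sets $A_i \subset V$ and $B_i \subset V$ witnessing $\abs{\EE_{x,y}(f - f_{\cP_i})(x,y) 1_{A_i}(x) 1_{B_i}(y)} > \epsilon$. Refine $\cP_i$ by intersecting each part with $A_i$, with $B_i$, and with their complements, obtaining $\cP_{i+1}$ with at most $4|\cP_i|$ parts.

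The key point is to track the \emph{energy} (or mean-square) $q(\cP) := \norm{f_\cP}_2^2 = \EE_{x,y} f_\cP(x,y)^2$, which is monotone under refinement and bounded above by $1$ since $f$ takes values in $[0,1]$. First I would record the Pythagorean-type identity: for a refinement $\cQ$ of $\cP$, $q(\cQ) - q(\cP) = \norm{f_\cQ - f_\cP}_2^2$, which follows because $f_\cQ - f_\cP$ is orthogonal to $f_\cP$ in $L^2(V \times V)$ (the conditional-expectation projection is self-adjoint and idempotent). Next, since $f - f_{\cP_{i+1}}$ is orthogonal to every function that is constant on the blocks of $\cP_{i+1}$ — in particular to $1_{A_i}(x)1_{B_i}(y)$ averaged over those blocks, which equals $(1_{A_i} \otimes 1_{B_i})_{\cP_{i+1}}$ — one has $\abs{\EE (f - f_{\cP_i})(x,y)1_{A_i}(x)1_{B_i}(y)} = \abs{\EE (f_{\cP_{i+1}} - f_{\cP_i})(x,y)1_{A_i}(x)1_{B_i}(y)} \le \norm{f_{\cP_{i+1}} - f_{\cP_i}}_2$ by Cauchy--Schwarz (using $\abs{1_{A_i}(x)1_{B_i}(y)} \le 1$). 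Combining, $q(\cP_{i+1}) - q(\cP_i) = \norm{f_{\cP_{i+1}} - f_{\cP_i}}_2^2 > \epsilon^2$. Hence the process terminates after at most $\epsilon^{-2}$ steps, since $q$ lies in $[0,1]$.

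It remains to bound the number of parts. After $k \le \epsilon^{-2}$ refinements, each multiplying the part count by at most $4$, the final partition has at most $4^{\epsilon^{-2}} = 2^{O(\epsilon^{-2})}$ parts, as claimed. (If symmetry of $\cP$ is desired one can symmetrize each refinement step by intersecting with $A_i, B_i$ and their complements simultaneously in both coordinates, which only changes the base of the exponent and leaves $f_\cP$ symmetric since $f$ is.)

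The main obstacle — really the only subtle point — is the orthogonality bookkeeping in the middle step: one must be careful that the witnessing indicator function $1_{A_i}(x)1_{B_i}(y)$ need \emph{not} be a step function for $\cP_i$, so the inner product with $f - f_{\cP_i}$ is not obviously small; the fix is precisely to refine by $A_i$ and $B_i$ first so that this indicator becomes measurable with respect to $\cP_{i+1}$, after which $\EE(f - f_{\cP_{i+1}})(x,y)1_{A_i}(x)1_{B_i}(y) = 0$ and the displacement gets absorbed into the energy gain. Everything else is routine Hilbert-space projection and geometric series counting.
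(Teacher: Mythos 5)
Your proof is correct: it is the standard Frieze--Kannan energy-increment argument (refine by the witnessing sets $A_i,B_i$, use the Pythagorean identity for the conditional-expectation projection, and gain at least $\epsilon^2$ in $\norm{f_{\cP}}_2^2$ per step), and the orthogonality point you flag is handled exactly as it should be. The paper quotes this dense statement without proof, but its Appendix~\ref{sec:pf-reg} proof of the sparse analogue (\cref{thm:reg-fn}) follows the same template, with your $L^2$ energy replaced by the truncated convex function $\Phi$ to cope with unbounded $f$; your argument is precisely the bounded specialization.
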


For sparse graphs, one would like to have control on the error term that is commensurate with the overall edge density of the graph. More explicitly,
for an $n$-vertex graph with on the order of $pn^2$ edges for some $p = p_n \to 0$, one would like to have error terms of the form $\epsilon p$ in the above inequality. 
To capture this scaling, we renormalize $f$ by dividing the edge-indicator function of the graph by the edge density $p$. 
Thus, the sparse setting corresponds to unbounded functions $f$ with $L^1$ norm $O(1)$.

Our sparse weak regularity lemma is stated below. The proof follows an energy increment strategy, as is usual with proofs of regularity lemmas. Since this is now fairly standard, we refer the reader to \cref{sec:pf-reg} for the details. 

\begin{theorem} \label{thm:reg-fn}
Let $\epsilon > 0$. Let $V$ be a probability space and $f \colon V \times V \to [0,\infty)$ be a measurable symmetric function.
Then there exists a partition $\cP$ of $V$ into at most $2^{32 \EE f/\epsilon^2}$ parts such that 
\[
\norm{ (f - f_\cP) 1_{f_\cP \le 1}}_\square \le \epsilon.
\]
\end{theorem}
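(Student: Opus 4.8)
The plan is to run the standard energy-increment argument, but with a modification that accounts for the unboundedness of $f$ by only ``counting'' the part of $f$ that lies below the step function at each stage. Fix the probability space $V$ and the symmetric measurable $f \colon V \times V \to [0,\infty)$. For a partition $\cP$ of $V$, define the energy (index) $q(\cP) := \EE \bigl[ (f_\cP \wedge 1)^2 \bigr] = \int_{V \times V} (\min(f_\cP,1))^2$, where we truncate $f_\cP$ at $1$ before squaring. This quantity lies in $[0,1]$, so it cannot increase indefinitely. The whole argument rests on showing that if the conclusion fails for the current partition $\cP$, then one can refine $\cP$ into a partition $\cP'$ with a bounded number of new parts such that $q(\cP') \ge q(\cP) + c\epsilon^2$ for an absolute constant $c$; iterating at most $O(1/\epsilon^2)$ times then terminates, and bookkeeping the number of parts at each step gives the claimed bound $2^{32\EE f/\epsilon^2}$ (the $\EE f$ enters because the number of parts introduced per step will depend on $\EE f$, or alternatively because one tracks a shifted energy).

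First I would set up the refinement step. Suppose $\norm{(f - f_\cP)1_{f_\cP \le 1}}_\square > \epsilon$, witnessed by sets $A, B \subseteq V$, i.e. $\bigl| \EE f(x,y) 1_{f_\cP(x,y)\le 1} 1_A(x) 1_B(y) - \EE f_\cP(x,y) 1_{f_\cP(x,y)\le 1}1_A(x)1_B(y) \bigr| > \epsilon$. Let $\cP' = \cP \vee \{A, A^c\} \vee \{B, B^c\}$ be the common refinement, which has at most $4|\cP|$ parts. The key computation is that $q(\cP') - q(\cP) \ge \snorm{(f_{\cP'} - f_\cP)1_{\text{something}}}_2^2$ up to the truncation, combined with the fact that the cut-norm discrepancy above forces $f_{\cP'}$ and $f_\cP$ to differ substantially in $L^2$ on the region where $f_\cP \le 1$. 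Concretely, the inner product of $f - f_\cP$ with $1_A \otimes 1_B$ equals the inner product of $f_{\cP'} - f_\cP$ with $1_A \otimes 1_B$ (since $\cP'$ refines both and averaging is a projection), and since $f_{\cP'} - f_\cP$ is constant on $\cP'$-blocks and $1_A \otimes 1_B$ is $\cP'$-measurable, Cauchy--Schwarz gives $\snorm{f_{\cP'} - f_\cP}_2 > \epsilon$. The only subtlety is the indicator $1_{f_\cP \le 1}$: I would split the region $\{f_\cP \le 1\}$ from its complement and argue that contributions from $\{f_\cP > 1\}$ can be handled because on $\cP'$-blocks inside $\{f_\cP > 1\}$ the value of $f_\cP$ is unchanged (it's determined by the coarser partition), so those blocks contribute nothing to $f_{\cP'} - f_\cP$ restricted appropriately — wait, that's not quite right since $f_{\cP'}$ can differ; rather, I would use that $\EE[(f_{\cP'} \wedge 1)^2] - \EE[(f_\cP \wedge 1)^2] \ge \EE[(f_{\cP'} - f_\cP)^2 1_{f_{\cP'} \le 1}]$, a pointwise-on-blocks convexity inequality, and then relate the right side to the cut-norm witness.

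The energy-increment bookkeeping is the second ingredient. Starting from the trivial partition with $q \ge 0$ and noting $q \le 1$ always, after $k$ refinement steps the partition has at most $4^k$ times its original size. But $4^k$ alone would only give $2^{O(1/\epsilon^2)}$ parts, independent of $\EE f$ — so the $\EE f$ in the exponent must come from being more careful. Here is where I suspect the actual argument tracks a slightly different quantity: instead one shows each refinement step multiplies the number of parts by a factor depending on how finely one must cut, and since the relevant ``mass'' is $\EE f$ rather than $1$, the natural energy is $\EE[(f_\cP \wedge 1) f_\cP]$ or the number of parts is controlled via $\EE f$. I would follow Scott's treatment: the energy is bounded by $\EE f$ (not by $1$), because the truncation interacts with $f$ in a way that the natural monotone quantity is $\int (f_\cP \wedge 1) \, f$ which is at most $\EE f$, and each step increases it by $\ge \epsilon^2$, giving $\le \EE f/\epsilon^2$ steps, hence $4^{\EE f/\epsilon^2} = 2^{2\EE f/\epsilon^2} \le 2^{32\EE f/\epsilon^2}$ parts (with room to spare absorbing constants from a cleaner but lossier version of the increment bound).

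The main obstacle, as flagged above, is getting the truncation $1_{f_\cP \le 1}$ to play nicely with the energy-increment inequality: one must choose the right monotone energy functional so that (i) it is bounded in terms of $\EE f$, (ii) it genuinely increases by $\Omega(\epsilon^2)$ under refinement when the cut-norm condition with the truncated integrand fails, and (iii) the truncation at level $1$ is consistent between $\cP$ and $\cP'$ in the sense needed. I expect the clean choice is to prove the pointwise block inequality $(f_{\cP'} \wedge 1)^2 \ge (f_\cP \wedge 1)^2 + 2(f_\cP \wedge 1)(f_{\cP'} - f_\cP)1_{f_\cP \le 1} - O\bigl((f_{\cP'}-f_\cP)^2 1_{f_\cP \le 1}\bigr)$ — no, the honest route is just $\EE[(f_{\cP'}\wedge 1)^2] \ge \EE[(f_\cP \wedge 1)^2] + \snorm{(f_{\cP'}-f_\cP)1_{f_{\cP'}\le 1, f_\cP \le 1}}_2^2$ via the orthogonality of the refinement projection on the truncated region, and then a short argument that the cut-norm witness survives restriction to that region — and once that inequality is in hand the rest is routine, which is why the excerpt defers it to an appendix. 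I would write up exactly these steps, deferring the routine constant-chasing.
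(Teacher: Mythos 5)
You have the right overall strategy (iterated refinement driven by an energy increment, with the number of steps bounded by the total energy divided by $\epsilon^2$), and you correctly identify the crux: one needs an energy functional that is bounded in terms of $\EE f$, monotone under refinement, and genuinely increasing when the truncated cut norm is large. But you do not resolve this, and the candidate you settle on fails. The functional $q(\cP) = \EE[(f_\cP \wedge 1)^2]$ is not monotone under refinement because $x \mapsto (\min(x,1))^2$ is not convex: if on some block of $\cP$ one has $f_\cP = 1$ while $f_{\cP'}$ equals $0$ on half the block and $2$ on the other half (e.g.\ $V=[0,1]$, $\cP$ trivial, $f = 2\cdot 1_{([0,\frac12]^2) \cup ((\frac12,1]^2)}$), then $\EE[(f_{\cP'} \wedge 1)^2] = \tfrac12 < 1 = \EE[(f_\cP\wedge 1)^2]$, so the energy decreases. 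In particular, your ``honest route'' inequality $\EE[(f_{\cP'}\wedge 1)^2] \ge \EE[(f_\cP \wedge 1)^2] + \snorm{(f_{\cP'}-f_\cP)1_{f_{\cP'}\le 1,\, f_\cP \le 1}}_2^2$ is false: the same example gives left side $\tfrac12$ and right side $\tfrac32$. The alternative energy $\EE[(f_\cP \wedge 1) f_\cP]$ is floated but never analyzed. Since the interaction of the truncation with the increment is exactly the difficulty that distinguishes the sparse lemma from the dense one, this is a genuine gap rather than routine constant-chasing.

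The paper's fix is to abandon the hard truncation inside the energy and instead use the convex function $\Phi(x) = x^2$ for $0 \le x \le 2$ and $\Phi(x) = 4x-4$ for $x > 2$, with energy $\EE[\Phi\circ f_\cP]$. Convexity gives monotonicity under refinement; the linear tail gives $\Phi(x) \le 4x$, hence energy at most $4\EE f$, which is where $\EE f$ enters the exponent; and a bespoke defect inequality (proved by a case split on whether the conditional mean of the refined values lying above the block average exceeds $2$) shows that the energy increases by at least $\frac14 \bigl(\EE[\,\abs{f_{\cP'} - f_\cP}\, 1_{f_\cP \le 1}]\bigr)^2 > \epsilon^2/4$ whenever the truncated cut norm exceeds $\epsilon$. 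This yields at most $16\,\EE f/\epsilon^2$ steps, each at most quadrupling the number of parts, giving $2^{32\EE f/\epsilon^2}$. Your write-up needs this (or an equivalent) choice of energy before the rest of the argument goes through.
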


Here $(f - f_\cP) 1_{f_\cP \le 1}$ denotes the function with value $f(x,y) - f_\cP(x,y)$ if $f_\cP(x,y) \le 1$ and $0$ otherwise.
The cutoff term $1_{f_\cP\le 1}$ means that we neglect those parts of the graph that are too dense.
Indeed, it would be too much to ask for $\norm{f - f_\cP}_\square$ to be small without this cutoff. 
For example, if one had $|V| = n$, $f(x,x) = n$, and $f(x,y) = 0$ for all $x \ne y$, then one would not be able to partition $V$ into $O_\epsilon(1)$ parts so that $\norm{f - f_\cP}_\square \le \epsilon$.

For our applications, it will be necessary to show that the cutoff has little effect on graphs with few $C_4$'s. In the next two lemmas, we show that such graphs have a negligible number of edges lying between pairs of parts whose edge density greatly exceeds the average.
A similar argument was also presented in \cite{AKSV14}, though we include the complete proof here for the convenience of the reader.

\begin{lemma} \label{lem:c4-lower}
	Let $G$ be a bipartite graph with nonempty vertex sets $A$ and $B$ and $m$ edges. 
	If $m \ge 4 \abs{A}\abs{B}^{1/2} + 4\abs{B}$, 
		then the number of $4$-cycles in $G$ is at least $m^4 \abs{A}^{-2} \abs{B}^{-2}/64$.
\end{lemma}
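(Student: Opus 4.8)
The statement is a standard Kővári–Sós–Turán-type lower bound on $4$-cycles via convexity, so the plan is to count paths of length two (``cherries'') with both endpoints in $B$ and compare with the number of pairs in $B$. Write $d(b)$ for the degree of a vertex $b \in B$, so $\sum_{b \in B} d(b) = m$. The number of pairs $(a, a')$ with $a \ne a'$ in $A$ that have a common neighbor $b$, counted with multiplicity over $b$, is $\sum_{b \in B} d(b)(d(b)-1)$; this also equals twice the number of $4$-cycles plus a correction term coming from pairs $\{a, a'\}$ with exactly one common neighbor (which contribute $0$ to the $4$-cycle count). More cleanly, I would count \emph{ordered} pairs of vertices in $B$ with a common neighbor in $A$: letting $c(a)$ denote the degree of $a \in A$, the number of $4$-cycles (ordered appropriately) is $\tfrac14 \sum$ over pairs, so it is cleanest to set up the count from the $B$-side. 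Let me instead count labelled paths $b - a - b'$ with $b, b' \in B$ (possibly $b = b'$): there are $\sum_{a \in A} c(a)^2$ of these. By Cauchy–Schwarz, $\sum_{a \in A} c(a)^2 \ge (\sum_a c(a))^2 / \abs{A} = m^2/\abs{A}$. On the other hand, the number of such paths with $b \ne b'$ equals $\sum_{a} c(a)(c(a)-1) = \sum_a c(a)^2 - m \ge m^2/\abs{A} - m$.

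Now the number of ordered pairs $(b, b')$ with $b \ne b'$ that have \emph{at least one} common neighbor is at most $\abs{B}(\abs{B}-1) \le \abs{B}^2$, and the number of (appropriately labelled) $4$-cycles is, up to a bounded multiplicative constant, the number of such ordered pairs counted \emph{with multiplicity at least two} in common neighbors. Concretely, a second application of Cauchy–Schwarz over the ordered pairs $(b,b')$ with $b \ne b'$ gives that $\sum_{(b,b')} \lambda(b,b')^2 \ge \big(\sum_{(b,b')} \lambda(b,b')\big)^2 / \abs{B}^2$, where $\lambda(b,b')$ is the number of common neighbors; the left side is $4 \cdot (\#C_4) + \sum_{(b,b')}\lambda(b,b')$ (the linear term again being an overcount to discard), and the right side is $\ge (m^2/\abs{A} - m)^2/\abs{B}^2$. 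Rearranging, $\#C_4 \ge \tfrac14\big[(m^2/\abs{A} - m)^2 \abs{B}^{-2} - (m^2/\abs{A} - m)\big]$. The hypothesis $m \ge 4\abs{A}\abs{B}^{1/2} + 4\abs{B}$ is exactly what is needed to absorb the linear correction terms: it forces $m^2/\abs{A} - m \ge \tfrac12 m^2/\abs{A}$ (from $m \ge 2\abs{A}$, which follows since $m \ge 4\abs{B} \ge 4 \ge \dots$, or more directly from $m \ge 4\abs A\abs B^{1/2} \ge 4\abs A$ when $\abs B \ge 1$) and makes the quadratic term dominate the linear one (from $m^2/\abs{A} \ge 16\abs{B}$, i.e. $m \ge 4\abs{A}^{1/2}\abs{B}^{1/2}$, which the first term of the hypothesis gives after noting $\abs A \ge \abs A^{1/2}$). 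Tracking the constants through these two reductions yields the claimed bound $\#C_4 \ge m^4\abs{A}^{-2}\abs{B}^{-2}/64$.

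The only mildly delicate point is bookkeeping the definition of ``number of $4$-cycles'': whether we count labelled closed walks $b - a - b' - a' - b$, unordered $4$-cycles as subgraphs, or something in between, changes the constant. I would fix the convention that a $4$-cycle is a subgraph isomorphic to $C_4$ (so an unordered copy), in which case $\sum_{(b,b')\ ordered,\ b\ne b'} \binom{\lambda(b,b')}{2} = 2\cdot\#C_4$ since each $C_4$ has two pairs of ``opposite'' $B$-vertices and each contributes once as an ordered pair in each direction — actually each $C_4$ is counted $2 \cdot 2 = 4$ times this way, once for each of the $2$ ordered versions of its unique ``diagonal'' pair in $B$, times... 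I will pin this down carefully in the writeup; the safe route is to lower bound $\#C_4 \ge \tfrac14\sum_{(b,b')}\lambda(b,b')(\lambda(b,b')-1)$ and not worry about whether the constant $\tfrac14$ is tight, since we have the generous factor $\tfrac{1}{64}$ to play with.

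\textbf{Main obstacle.} There is no real obstacle — this is a convexity computation. The part requiring the most care is choosing the counting conventions and verifying that the hypothesis $m \ge 4\abs{A}\abs{B}^{1/2} + 4\abs{B}$ exactly suffices to absorb both linear error terms (one from each Cauchy–Schwarz step) with room to spare for the final constant $64$; I expect the two summands in the hypothesis to correspond precisely to these two corrections.
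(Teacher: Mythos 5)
There is a genuine gap, and it comes from the orientation of your double counting. You count cherries centered at $A$ (paths $b\text{--}a\text{--}b'$) and then apply Cauchy--Schwarz over the $\abs{B}^2$ ordered pairs of $B$-vertices. Writing $S=\sum_a c(a)(c(a)-1)\ge m^2/\abs{A}-m$, your final bound is $\#C_4\ge\tfrac14\bigl(S^2\abs{B}^{-2}-S\bigr)$, and for the quadratic term to dominate you need $S\gtrsim\abs{B}^2$, i.e.\ $m\gtrsim\abs{A}^{1/2}\abs{B}$ --- not $m\gtrsim\abs{A}^{1/2}\abs{B}^{1/2}$ as you wrote (you compare $S^2/\abs{B}^2$ against $S$ but then require only $S\gtrsim\abs{B}$ instead of $S\gtrsim\abs{B}^2$). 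The condition $m\gtrsim\abs{A}^{1/2}\abs{B}$ does \emph{not} follow from the hypothesis $m\ge 4\abs{A}\abs{B}^{1/2}+4\abs{B}$, which is asymmetric in $A$ and $B$. Concretely, take $\abs{A}=100$, $\abs{B}=10^6$, $m=4.4\times 10^6$: the hypothesis holds, but $S\approx m^2/\abs{A}\approx 2\times 10^{11}$ while $\abs{B}^2=10^{12}$, so $S^2\abs{B}^{-2}-S<0$ and your bound is vacuous. (The lemma is still true there, so it is the method, not the statement, that fails.) In effect you have proved the lemma with $A$ and $B$ interchanged in the hypothesis.

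The paper runs the two convexity steps in the opposite orientation, and the two summands of the hypothesis match those steps exactly: it first counts cherries centered at $B$, getting $\sum_{v\in B}\binom{\deg(v)}{2}\ge\abs{B}\binom{m/\abs{B}}{2}\ge m^2/(4\abs{B})$, where the term $4\abs{B}$ in the hypothesis guarantees $m/\abs{B}\ge 2$ so that $\binom{x}{2}\ge x^2/4$ applies; it then applies convexity to $\sum_{\{x,y\}\in\binom{A}{2}}\binom{\codeg(x,y)}{2}$ over the $\binom{\abs{A}}{2}$ pairs of $A$, where the term $4\abs{A}\abs{B}^{1/2}$ guarantees the average codegree over $A$-pairs is at least $2$. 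To repair your argument, swap the roles: count paths $a\text{--}b\text{--}a'$ and do the second Cauchy--Schwarz over pairs of $A$-vertices; then your two absorption conditions become $m\ge 2\abs{B}$ and $m\gtrsim\abs{A}\abs{B}^{1/2}$, both of which the hypothesis supplies.
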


\begin{proof}
	Writing $\binom{x}{2} = x(x-1)/2$ for all real $x$, we have $\binom{x}{2} \ge x^2/4$ for all $x \ge 2$ and so, by the convexity of $x \mapsto \binom{x}{2}$,
	\[
	\sum_{\{x,y\} \in \binom{A}{2}} \codeg(x,y) 
	= \sum_{v \in B} \binom{\deg(v)}{2}
	\ge \abs{B} \binom{m/\abs{B}}{2} \ge \frac{m^2}{4\abs{B}},
	\]
	where $\codeg(x,y)$ is the number of common neighbors of $x$ and $y$. 
	Thus, the number of $4$-cycles in $G$ is
	\[
	\sum_{\{x,y\} \in \binom{A}{2}} \binom{\codeg(x,y)}{2}
	\ge \binom{\abs{A}}{2} \binom{\binom{\abs{A}}{2}^{-1} \frac{m^2}{4\abs{B}}}{2} 
	\ge \frac{m^4}{64 \abs{A}^2\abs{B}^2}. \qedhere
	\]
\end{proof}

We write $e(U, W)$ for the number of pairs $(u,w)\in U \times W$ forming an edge in the given graph $G$.

\begin{lemma}[Dense pairs] \label{lem:c4-dense-pairs}
Let $G$ be an $n$-vertex graph and $V(G) = V_1 \cup V_2 \cup \cdots \cup V_M$ be a partition of the vertex set of $G$. Let $q > 0$ be a real number. Let $T$ denote the number of $4$-cycles in $G$. Then the number of edges of $G$ lying between parts $V_i$ and $V_j$ with $e(V_i, V_j) \ge q\abs{V_i}\abs{V_j}$ (here $i=j$ is allowed) is $O( (1/q + M^2 + M^{1/4} T^{1/4})n)$.
\end{lemma}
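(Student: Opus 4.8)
The plan is to split the pairs $(V_i,V_j)$ with $e(V_i,V_j)\ge q|V_i||V_j|$ into two regimes according to how many edges they contain, and to bound each regime separately. Call a pair \emph{dense} if $e(V_i,V_j)\ge q|V_i||V_j|$. First I would dispose of the \emph{sparse-ish} dense pairs, namely those with $e(V_i,V_j)\le 4|V_i||V_j|^{1/2}+4|V_j|$ (taking $|V_i|\ge|V_j|$ without loss of generality, since the pair is symmetric). For such a pair, density at least $q$ forces $q|V_i||V_j|\le 4|V_i||V_j|^{1/2}+4|V_j|$, hence $|V_j|\le (8/q)^2$ say, and then $e(V_i,V_j)\le 4|V_i||V_j|^{1/2}+4|V_j|\lesssim (1/q)|V_i| + (1/q)^2$. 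Summing over all $\le M^2$ ordered pairs and over $i=j$ as well, the total number of edges contributed is $O\!\big((1/q)\sum_i|V_i| + M^2(1/q)^2\big)$; since $\sum_i|V_i|=n$ this is $O((n/q + M^2/q^2))$. One should double-check whether the intended bound absorbs an $M^2/q^2$ term rather than $M^2 n$ — if not, one instead simply notes $|V_j|\le (8/q)^2$ and bounds $e(V_i,V_j)\le|V_i||V_j|\le (8/q)^2|V_i|$, giving $O(n/q^2)$ overall for this regime, still of the claimed form after adjusting constants; I would pick whichever matching is cleanest against the stated $O((1/q+M^2+M^{1/4}T^{1/4})n)$.

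For the remaining \emph{dense} pairs, those with $e(V_i,V_j)> 4|V_i||V_j|^{1/2}+4|V_j|$ (again with $|V_i|\ge|V_j|$), apply \cref{lem:c4-lower} to the bipartite graph between $V_i$ and $V_j$: it contains at least $e(V_i,V_j)^4 |V_i|^{-2}|V_j|^{-2}/64$ copies of $C_4$. Rearranging, $e(V_i,V_j)\le \big(64\,c_4(V_i,V_j)\big)^{1/4}|V_i|^{1/2}|V_j|^{1/2}$, where $c_4(V_i,V_j)$ denotes the number of $4$-cycles in $G$ with two vertices in $V_i$ and two in $V_j$. Now sum over all such pairs. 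The total is at most $\sum 2\,\big(64\,c_4(V_i,V_j)\big)^{1/4}|V_i|^{1/2}|V_j|^{1/2}$ over unordered pairs. Apply the Cauchy–Schwarz (or power-mean) inequality, writing the sum as $\sum (c_4)^{1/4}\cdot(|V_i||V_j|)^{1/2}$ and using Hölder with exponents $4$ and $4/3$: this is at most $\big(\sum c_4(V_i,V_j)\big)^{1/4}\big(\sum (|V_i||V_j|)^{2/3}\big)^{3/4}$. Each $C_4$ of $G$ is counted $O(1)$ times across the pairs (its four vertices determine a set of at most $\binom{2}{1}^2$ relevant pair-types, a bounded number), so $\sum c_4(V_i,V_j)\lesssim T$. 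For the other factor, since $\sum_{i}|V_i|=n$ we have $\sum_{i\le j}(|V_i||V_j|)^{2/3}\le \big(\sum_i |V_i|^{2/3}\big)^2$, and by Hölder $\sum_i|V_i|^{2/3}\le M^{1/3}\big(\sum_i|V_i|\big)^{2/3}=M^{1/3}n^{2/3}$, so $\big(\sum (|V_i||V_j|)^{2/3}\big)^{3/4}\lesssim \big(M^{2/3}n^{4/3}\big)^{3/4}=M^{1/2}n$. Hence this regime contributes $O\big(T^{1/4}M^{1/2}n\big)$ — which is slightly worse than the claimed $M^{1/4}T^{1/4}n$, so the exponents in the Hölder split need to be retuned; the cleanest route is probably to bound $|V_j|^{1/2}\le$ (something) more carefully, e.g. pull out one factor of $|V_i||V_j|\le n^2$ differently, or use that $\sum|V_i||V_j|\le n^2$ together with $\max_{i,j}(\cdot)$, to land exactly on $M^{1/4}T^{1/4}n$.

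The main obstacle, then, is purely the bookkeeping of exponents: getting \cref{lem:c4-lower} to yield the edge bound $e(V_i,V_j)\lesssim (c_4)^{1/4}(|V_i||V_j|)^{1/2}$ is immediate, but combining these across $O(M^2)$ pairs so that the total comes out as $O(M^{1/4}T^{1/4}n)$ rather than a larger power of $M$ requires choosing the right Hölder exponents and the right way to distribute the factors $|V_i|$, $|V_j|$ between the "local $C_4$-count" term and the "size" term. I expect the correct split is to write $e(V_i,V_j)\le \big(64 c_4\big)^{1/4}|V_i|^{1/2}|V_j|^{1/2}$ and then estimate $\sum (c_4)^{1/4}|V_i|^{1/2}|V_j|^{1/2}$ by first summing over $j$ for fixed $i$ — using Cauchy–Schwarz in $j$ to get $\le \big(\sum_j c_4\big)^{1/4}\big(\sum_j |V_i|^{2/3}|V_j|^{2/3}\big)^{3/4}$ is again the $4$ vs $4/3$ split — and then summing over $i$; keeping track of where the single global factor $n$ (from $\sum |V_i|=n$) versus the factor $M$ (from the number of parts, via Hölder) enters is the only delicate point. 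Everything else — the reduction to the two regimes, the application of \cref{lem:c4-lower}, and the $O(1)$ overcounting of each $C_4$ — is routine.
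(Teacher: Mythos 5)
Your overall strategy (split the dense pairs into a ``few edges'' regime and a regime where \cref{lem:c4-lower} applies, then sum via H\"older) matches the paper's, but there is a genuine gap that cannot be closed by ``retuning the H\"older exponents'': any pair-by-pair argument of the kind you describe is stuck at $M^{1/2}T^{1/4}n$. Your own computation already shows this, and the exponents are not the culprit. The real issue is that you retain, for each pair $(V_i,V_j)$, only the count $c_4(V_i,V_j)$ of $4$-cycles with two antipodal vertices in each part, and then apply H\"older over the roughly $M^2$ pairs; with $M^2$ terms, equal part sizes and equal $c_4$'s make that H\"older step tight at $M^{1/2}$, and your proposed fix of summing over $j$ first and then over $i$ gives $M^{1/4}\cdot M^{1/4}=M^{1/2}$ again. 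The missing idea in the paper's proof is \emph{aggregation}: for each $i$ one sets $W_i=\bigcup\{V_j: e(V_i,V_j)\ge q\abs{V_i}\abs{V_j}\}$, notes that $e(V_i,W_i)\ge q\abs{V_i}\abs{W_i}$, and applies \cref{lem:c4-lower} to the single bipartite pair $(V_i,W_i)$. This does two things at once: the final H\"older runs over only $M$ terms (one per $i$), which is exactly where $M^{1/4}$ comes from, and the relevant $4$-cycle count $T_i$ (first and third vertices in $V_i$, second and fourth anywhere in $W_i$) includes $4$-cycles whose other two vertices lie in \emph{different} parts $V_j,V_{j'}\subseteq W_i$ --- a much larger class than $\sum_j c_4(V_i,V_j)$, yet still satisfying $\sum_i T_i\le 2T$.

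Your first regime has a related problem. After deducing $\min(\abs{V_i},\abs{V_j})\lesssim q^{-2}$, you sum $e(V_i,V_j)\lesssim q^{-1}\abs{V_i}+q^{-2}$ over pairs and claim a total of $O(n/q+M^2/q^2)$; but for a fixed $i$ there can be up to $M$ partners $j$, so the first term really sums to $Mn/q$, which is not $O((1/q+M^2)n)$ in general (take $q=1/n$ and $M=\sqrt n$), and neither is $M^2/q^2$ for small $q$. The paper again sidesteps this via the aggregated pair: the density hypothesis gives $e(V_i,W_i)\le e(V_i,W_i)^2/(q\abs{V_i}\abs{W_i})\lesssim \abs{V_i}/q+\abs{W_i}/(q\abs{V_i})$, a preliminary trivial case disposes of $\abs{V_i}\le 1/(Mq)$ so that the second term is at most $Mn$, and summing the single bound per $i$ over the $M$ values of $i$ lands exactly on $O((1/q+M^2)n)$.
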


\begin{proof}
	For each $i \in [M]$, let $W_i$ denote the union of all $V_j$ such that $e(V_i, V_j) \ge q\abs{V_i}\abs{V_j}$. Then $e(V_i, W_i) \ge q\abs{V_i}\abs{W_i}$. Let $ T_i$ denote the number of $4$-cycles with the first and third vertices in $V_i$ and second and fourth vertices in $W_i$. 
	We claim that
	\begin{equation} \label{eq:c4-dense-pair}
		e(V_i, W_i) \lesssim  \frac{n}{Mq} + \frac{\abs{V_i}}{q} +  Mn + n^{1/2} \abs{V_i}^{1/2}  T_i^{1/4}.
	\end{equation}
	Indeed, if $\abs{V_i} \le 1/Mq$, then $e(V_i, W_i) \le \abs{V_i}\abs{W_i} \le n/Mq$. 
	So assume $\abs{V_i} > 1/Mq$. 
	If $e(V_i, W_i) < 4 \abs{V_i}\abs{W_i}^{1/2} + 4\abs{W_i}$, then $e(V_i, W_i)^2 \leq 32\abs{V_i}^2\abs{W_i}+32\abs{W_i}^2$ and
	\[
		e(V_i, W_i) \le \frac{e(V_i, W_i)^2}{q \abs{V_i}\abs{W_i}}
	\lesssim  \frac{\abs{V_i}}{q} + \frac{\abs{W_i}}{q\abs{V_i}}
	\le  \frac{\abs{V_i}}{q} + Mn,
	\]
	where in the final inequality we use $\abs{W_i} \le n$ and $\abs{V_i} > 1/Mq$.
	Otherwise, by \cref{lem:c4-lower}, we have $T_i \gtrsim e(V_i, W_i)^4 \abs{V_i}^{-2} \abs{W_i}^{-2}$, so
	\[
		e(V_i, W_i) \lesssim \abs{V_i}^{1/2} \abs{W_i}^{1/2}  T_i^{1/4}
		\le n^{1/2} \abs{V_i}^{1/2}  T_i^{1/4}.
	\]
	This proves \cref{eq:c4-dense-pair}. 
	Summing over all $i$, we obtain that the number of edges of $G$ lying between parts $V_i$ and $V_j$ with $e(V_i, V_j) \ge q\abs{V_i}\abs{V_j}$ is at most 
	\begin{align*}
	\sum_{i=1}^M e(V_i, W_i) 
	&\lesssim \sum_{i=1}^M \paren{ \frac{n}{Mq} + \frac{\abs{V_i}}{q} + Mn + n^{1/2} \abs{V_i}^{1/2}  T_i^{1/4}}
	\\
	& \le \frac{2n}{q} + M^2n + n^{1/2} \paren{\sum_{i=1}^M 1}^{1/4} \paren{\sum_{i=1}^M \abs{V_i}}^{1/2} \paren{\sum_{i=1}^M  T_i}^{1/4}
	\\
	&\le \frac{2n}{q} + M^2n + 2 M^{1/4} T^{1/4} n ,  
	\end{align*}
	where we applied H\"older's inequality in the second step and also that $\sum_i T_i \leq 2T$.
\end{proof}

\noindent
\emph{Remark.} We will apply this lemma with $q = K/\sqrt{n}$ for some large constant $K$ and $T=o(n^2)$.

\section{A sparse counting lemma for 5-cycles} \label{sec:counting}

This section contains a novel counting lemma for 5-cycles in sparse graphs. As discussed in the introduction, counting lemmas are often the key obstacles to the development of the sparse regularity method. As such, our counting result, which says that if a graph does not have too many copies of $C_4$, then the number of copies of $C_5$ is approximated by the $C_5$ count in the weak regularity approximation, may be seen as our main contribution.
	
Combinatorially, the intuition is that in a graph with few $C_4$'s and edge density on the order of $n^{-1/2}$, the second neighborhood of a typical vertex has linear size, so that regularity ensures enough edges within this second neighborhood to generate many 5-cycles. This observation was already used to considerable effect in~\cite{AKSV14}.

Let $V$ be a probability space. Given a symmetric function $f\colon V \times V \to [0,\infty)$ (again \emph{symmetric} means $f(x,y) = f(y,x)$),
the \emph{homomorphism density} of $H$ in $f$ is given by
\[
t(H, f) = \int_{V^{V(H)}} \prod_{uv \in E(H)} f(x_u, x_v) \, \prod_{v \in V(H)} dx_v.
\]
As in our formulation of the regularity lemma in \cref{sec:regularity}, one should think of the function $f$ as a normalized edge-indicator function of the form $p^{-1}1_{E(G)}$. Our counting lemma is now as follows.

\begin{theorem}
Let $0< \epsilon < 1$ and $C \geq 1$. Let $V$ be a probability space and let $f \colon V \times V \to [0,\infty)$ and $g \colon V \times V \to [0,1]$ be measurable symmetric functions satisfying $t(C_4, f) \le C$ and $\norm{f-g}_\square \le \epsilon^4$. Then
\[
t(C_5, f) \ge t(C_5, g) - 11 C\epsilon.
\]
\end{theorem}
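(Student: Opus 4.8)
The plan is to interpolate between $g$ and $f$ one $C_5$-edge at a time, tracking the error in the cut norm of $u:=f-g$. For a symmetric kernel $h$ introduce the codegree kernel $K_h(x,y):=\int_V h(x,z)h(z,y)\,dz$ and write $\star$ for composition of kernels; then $t(C_5,h)=\langle h,\,K_h\star K_h\rangle$. Since the identity $K_f-K_g=f\star u+u\star g$ is exact,
\[
t(C_5,f)-t(C_5,g)=\langle u,\,K_f\star K_f\rangle+\bigl\langle g,\,(f\star u+u\star g)\star K_f+K_g\star(f\star u+u\star g)\bigr\rangle,
\]
which is a sum of five integrals, each a homomorphism density of $C_5$ carrying $u$ on exactly one edge and $f$ or $g$ on the other four. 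It suffices to bound each of these by $O(C\epsilon)$.

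Two of the five terms have the $u$-edge flanked in the cycle by two $g$-edges. For these I would fix the three vertices not on the $u$-edge; the two flanking factors then become functions of a single variable valued in $[0,1]$, so expressing each as a superposition of indicators and using the definition of the cut norm extracts a factor $\|u\|_\square$, while integrating the two remaining (non-adjacent) edges over the three fixed vertices costs at most $t(P_2,f)\le t(C_4,f)^{1/2}\le C^{1/2}$. Hence these two terms are $O(C^{1/2}\|u\|_\square)=O(C^{1/2}\epsilon^4)$, far smaller than needed.

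The remaining three terms have the $u$-edge adjacent to at least one edge carrying the unbounded function $f$, the extreme case being $\langle u,K_f\star K_f\rangle$ where both neighbours of $u$ carry $f$; handling these is the crux and the main obstacle, since the naive extraction fails when a flanking factor is unbounded. The relevant inputs from $t(C_4,f)\le C$ are that $K_f$ is controlled in $L^2$, $\|K_f\|_2^2=t(C_4,f)\le C$, and that $K_f=f\star f$ is a positive Hilbert--Schmidt operator with $\operatorname{tr}(K_f^{\,k})=t(C_{2k},f)\le t(C_4,f)^{k/2}\le C^{k/2}$ for $k\ge2$, so all even-cycle densities of $f$ are bounded by powers of $C$. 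I would first reduce to bounded kernels for the lower bound: since $K_f\star K_f-K_f^{\le M}\star K_f^{\le M}$ is a sum of compositions of nonnegative kernels, hence nonnegative, and $f\ge0$, one has $t(C_5,f)\ge\langle f,K_f^{\le M}\star K_f^{\le M}\rangle$ for the truncation $K_f^{\le M}:=\min(K_f,M)$, and the discarded part is $L^1$-small: $\|K_f-K_f^{\le M}\|_1\le\|K_f\|_2\,|\{K_f>M\}|^{1/2}\le C/M$. Because $K_f^{\le M}\star K_f^{\le M}=\int_V K_f^{\le M}(\,\cdot\,,z)\otimes K_f^{\le M}(z,\cdot)\,dz$ is a superposition of rank-one kernels bounded by $M$, replacing the outer $f$ by $g$ costs only $|\langle u,K_f^{\le M}\star K_f^{\le M}\rangle|\le M^2\|u\|_\square$.

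What remains, comparing $\langle g,K_f^{\le M}\star K_f^{\le M}\rangle$ with $t(C_5,g)=\langle g,K_g\star K_g\rangle$, is the delicate part: one must control the cross-terms $(K_f^{\le M}-K_g)\star K_f^{\le M}$ and $K_g\star(K_f^{\le M}-K_g)$ tested against $g$, and this is where the cut-norm dependence is recovered. The mechanism I expect to use is Cauchy--Schwarz to detach the $u$-factor from an adjacent $f$-factor: each such step replaces $\|u\|_\square$ by $\|u\|_\square^{1/2}$ while converting an otherwise uncontrolled path density of $f$ (of the sort $t(P_3,f)$, which can be unbounded even when $t(C_4,f)\le C$) into a bounded even-cycle density of $f$; two such steps yield a bound of the form $C^{O(1)}\|u\|_\square^{1/4}$, which is precisely why the hypothesis is stated with $\epsilon^4$ and the conclusion with $C\epsilon$. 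One then optimizes $M$ to balance $M^2\|u\|_\square$ against $C/M$, adds the five contributions, and checks that the constants collapse to $11C$; this final bookkeeping is routine but must be carried out with some care.
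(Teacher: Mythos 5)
Your setup is sound: the telescoping identity, the treatment of the two terms where the $u$-edge is flanked by two $g$-edges (via the functional form of the cut norm plus $t(P_2,f)\le t(C_4,f)^{1/2}$), the Markov-type bound $\norm{K_f-K_f^{\le M}}_1\le C/M$, and the observation that a composition of \emph{bounded} nonnegative kernels lets you extract $\norm{u}_\square$ are all correct and all appear, in some form, in the paper's argument. But the step you defer as "the delicate part'' is the entire content of the theorem, and the mechanism you sketch for it does not close. After your reductions, everything hinges on controlling $\norm{K_f^{\le M}-K_g}_\square$, and the natural decomposition gives $K_f-K_g=f\star u+u\star g$. The summand $u\star g$ has cut norm at most $\norm{u}_\square$ (compose with a kernel whose marginals are at most $1$), but $\norm{f\star u}_\square$ is \emph{not} controlled by $\norm{u}_\square$, because $f$ may have unbounded marginals; and truncating $K_f$ in $L^\infty$ \emph{after} composing does not repair this, since truncation is nonlinear and does not commute with the decomposition. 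The Cauchy--Schwarz "detaching'' you propose also does not terminate: squaring to separate $u$ from an adjacent $f$-edge produces two copies of $u$ glued at a vertex and still flanked by $f$'s; expanding $u=f-g$ there yields differences of densities of \emph{other} graphs in $f$ and $g$, each individually bounded by $C^{O(1)}$ but whose difference being small is itself a counting lemma of the same difficulty. (In the relative Szemer\'edi setting this doubling works because a majorant satisfying linear forms conditions pins down the doubled densities; no such input is available here.) The exponent bookkeeping "$\norm{u}_\square^{1/4}$ explains the $\epsilon^4$'' is a plausible-sounding rationalization, not a derivation.

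The idea you are missing is the paper's \emph{degree truncation of $f$ itself}, performed before composing: set $f'(x,y)=f(x,y)\,1[\EE_{z}f(x,z)\le\epsilon^{-2}]$. Since $\norm{f}_1\le\norm{g}_1+\norm{f-g}_\square\le 2$, Markov's inequality shows the excluded vertex set has measure at most $2\epsilon^2$, so $\norm{f'-g}_\square\le 3\epsilon^2$; and because $f'$ has marginals bounded by $\epsilon^{-2}$, the composition lemma gives $\norm{u\star f'}_\square\le\epsilon^{-2}\norm{u}_\square\le\epsilon^2$. This is exactly the estimate that makes $\norm{f\star f'-g\star g}_\square=O(\epsilon^2)$ available, after which one truncates the \emph{composed} kernel in $L^\infty$ (your $M$, the paper's $A$, paying $C/A$ in $L^1$) and finishes with the dense triangle counting lemma. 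Note also that the paper's argument is essentially one-sided: it repeatedly uses $f'\le f$ and $f\ge 0$ to \emph{discard} the contributions of high-degree vertices and of large values of $f\star f'$, which is legitimate only for a lower bound. Your framing of bounding all five telescoping terms in absolute value aims at a two-sided statement that the hypotheses are not known to support; you partially switch to one-sided reasoning for $\ang{f,K_f\star K_f}$, but you would need to do so systematically.
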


In practice, we will prove a multipartite version of this counting lemma which will be useful for applications. The two versions are essentially equivalent.

\vspace{3mm}

\emph{Notation and conventions.}
Let $V_1, \dots, V_5$ be probability spaces with indices considered mod $5$.
Vertices in $V_i$ are denoted by $x_i$ and, when appearing in the subscript of an expectation symbol (e.g., $\EE_{x_i}$), it is assumed that $x_i$ varies independently over $V_i$ according to its probability distribution (e.g., a uniformly random vertex if $V_i$ comes from an unweighted graph). 
For all $i\ne j$ differing by one in $\ZZ/5\ZZ$, we write $f_{ij} = f_{i,j}$ for a measurable function on $V_i \times V_j$ and assume that $f_{ij}(x_i, x_j) = f_{ji}(x_j, x_i)$.
Given functions $f_{12} \colon V_1 \times V_2 \to \RR$ and $f_{23} \colon V_2 \times V_3 \to \RR$, we write $f_{12} \circ f_{23} \colon V_1 \times V_3 \to \RR$ for the function 
\[
(f_{12} \circ f_{23}) (x_1, x_3) = \EE_{x_2 \in V_2} f_{12}(x_1, x_2) f_{23}(x_2, x_3),
\]
which can be thought of as the number of 2-edge paths between the vertices $x_1$ and $x_3$, appropriately normalized. The notation $\circ$ is used since it can be viewed as a composition of linear operators.

\begin{theorem} \label{thm:sparse-c5-count}
	Let $0 < \epsilon < 1$ and $C \ge 1$.
	Let $V_1, \dots, V_5$ be probability spaces. For each $i$ (taken mod $5$), let $f_{i,i+1},g_{i,i+1} \colon V_i \times V_{i+1} \to [0,\infty)$ with $0 \le g_{i,i+1} \le 1$ pointwise,  $\norm{f_{i,i+1} - g_{i,i+1}}_{\square} \le \epsilon^4$, and
	\begin{equation}
		\label{eq:sparse-c4-cond}
		\norm{f_{i-1,i} \circ f_{i,i+1}}_2^2 \le C.
	\end{equation}
	Then
	\begin{equation}
		\label{eq:sparse-c5-count}
	 \EE_{x_1, \dots, x_5} \prod_{i=1}^5 f_{i,i+1}(x_i,x_{i+1}) \ge \EE_{x_1, \dots, x_5} \prod_{i=1}^5 g_{i,i+1}(x_i,x_{i+1}) - 11 C \epsilon.
	\end{equation}
\end{theorem}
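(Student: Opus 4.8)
The plan is to prove the multipartite counting lemma by a telescoping/hybrid argument, replacing the functions $f_{i,i+1}$ by $g_{i,i+1}$ one edge at a time and controlling the error introduced at each step via the cut norm. Concretely, define for $k=0,1,\dots,5$ the hybrid product obtained by using $g$ on the first $k$ edges and $f$ on the remaining $5-k$ edges, and estimate the difference between consecutive hybrids. The key point is that when we swap $f_{i,i+1}$ for $g_{i,i+1}$ in the cyclic product, the difference is an expectation of $(f_{i,i+1}-g_{i,i+1})(x_i,x_{i+1})$ against a ``weight'' function coming from the other four edges of the $C_5$, and we want to bound this by $\norm{f_{i,i+1}-g_{i,i+1}}_\square \le \epsilon^4$ times an appropriate norm of that weight. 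Since the cut norm only controls pairings against functions of the form $1_A(x_i)1_B(x_{i+1})$ (or, by standard duality, against arbitrary bounded functions of $x_i$ and $x_{i+1}$ separately), the obstacle is that the weight is not a product of a function of $x_i$ alone and a function of $x_{i+1}$ alone — it couples $x_i$ and $x_{i+1}$ through the long path $x_i, x_{i-1}, x_{i-2}, x_{i-3}=x_{i+2}, x_{i+1}$.

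The resolution, which I expect to be the crux of the argument, is to split the length-four path connecting $x_i$ and $x_{i+1}$ at its midpoint. Writing the remaining four edges as a composition, the weight against which $f_{i,i+1}-g_{i,i+1}$ is integrated factors as $\EE_{x_{i-2}} P(x_i, x_{i-2}) Q(x_{i-2}, x_{i+1})$ where $P = f_{i-1,i}\circ f_{i-2,i-1}$ is a (normalized) count of $2$-paths from $x_{i-2}$ to $x_i$ and similarly $Q$. Then, for fixed $x_{i-2}$, the inner integrand $P(x_i,x_{i-2})Q(x_{i-2},x_{i+1})$ \emph{is} a product of a function of $x_i$ and a function of $x_{i+1}$, so $\abs{\EE_{x_i,x_{i+1}} (f-g)(x_i,x_{i+1}) P(x_i,x_{i-2})Q(x_{i-2},x_{i+1})} \le \norm{f-g}_\square \cdot \norm{P(\cdot,x_{i-2})}_\infty \norm{Q(x_{i-2},\cdot)}_\infty$ — but $\infty$-norms are too strong here. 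Instead one should use the sharper form of the cut-norm bound: $\abs{\EE f(x,y) u(x) v(y)} \le \norm{f}_\square \norm{u}_\infty \norm{v}_\infty$, and average over $x_{i-2}$ using Cauchy--Schwarz, landing on $\norm{f-g}_\square \cdot \EE_{x_{i-2}} \norm{P(\cdot,x_{i-2})}_2\norm{Q(x_{i-2},\cdot)}_2 \le \norm{f-g}_\square (\EE \norm{P}_2^2)^{1/2}(\EE \norm{Q}_2^2)^{1/2} = \norm{f-g}_\square \norm{P}_2 \norm{Q}_2$, where $\norm{P}_2^2 = \norm{f_{i-1,i}\circ f_{i-2,i-1}}_2^2 \le C$ by hypothesis~\eqref{eq:sparse-c4-cond}, and similarly $\norm{Q}_2^2 \le C$. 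This gives error $\le C\epsilon^4$ per swap — actually one must be slightly more careful, since in a mixed hybrid some of the four surrounding edges are $g$'s and some are $f$'s, but $g_{i,i+1}\le 1$ and the $C_4$-type bound $\norm{f_{i-1,i}\circ f_{i,i+1}}_2^2 \le C$ together with $C\ge 1$ let one bound every such mixed $2$-path $L^2$-norm by $C^{1/2}$ as well (using that $g\le 1$ forces $t(C_4,g)\le 1\le C$ and Cauchy--Schwarz to handle mixed compositions).

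Assembling the five swaps, the total error is at most $5 C \epsilon^4$ or so, which is comfortably below $11C\epsilon$ since $\epsilon<1$; the slack in the constant $11$ absorbs the bookkeeping from mixed hybrids and from the fact that one of the ``edges'' in each composition might itself be an $f$ with only an $L^2$ bound rather than an $L^\infty$ bound. I would organize the writeup as: (i) a lemma recording the inequality $\abs{\EE_{x,y} h(x,y)u(x)v(y)} \le \norm{h}_\square\norm{u}_\infty\norm{v}_\infty$ and its $L^2$-averaged consequence; (ii) a lemma bounding the $L^2$ norm of any composition of four of the eight available functions (four $f$'s, four $g$'s) along the path by $C$, using \eqref{eq:sparse-c4-cond}, $g\le 1$, and Cauchy--Schwarz; (iii) the telescoping estimate itself, swapping edge $i$ from $f$ to $g$ for $i=1,\dots,5$ in order and summing the errors. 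The main obstacle, as noted, is step (i)--(ii): correctly identifying that the cut norm must be paired against the path-weight \emph{after} fixing the midpoint vertex $x_{i-2}$, so that the weight genuinely decouples into a function of $x_i$ times a function of $x_{i+1}$, and then that the resulting $L^2$ quantities are exactly the $C_4$-densities bounded by hypothesis. Everything else is routine.
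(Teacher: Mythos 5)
Your overall architecture (reduce the $C_5$ to a pairing of $f_{i,i+1}-g_{i,i+1}$ against a weight built from $2$-path compositions, and recognize that the $L^2$ norms of those compositions are exactly the $C_4$-densities bounded by \cref{eq:sparse-c4-cond}) identifies the right objects, but the step you call the crux is invalid as stated, and it is precisely the difficulty the theorem is about. The cut norm controls pairings $\EE_{x,y} h(x,y)u(x)v(y)$ only against \emph{bounded} test functions: $\abs{\EE h\,u\,v} \le \norm{h}_\square \norm{u}_\infty \norm{v}_\infty$. The substitute you propose, $\abs{\EE_{x_i,x_{i+1}} (f-g)\,P(\cdot,x_{i-2})\,Q(x_{i-2},\cdot)} \le \norm{f-g}_\square \norm{P(\cdot,x_{i-2})}_2 \norm{Q(x_{i-2},\cdot)}_2$, is false in general --- ``averaging over $x_{i-2}$ using Cauchy--Schwarz'' happens outside the inner expectation and cannot downgrade the $\infty$-norms that the cut-norm inequality genuinely requires to $2$-norms. (The bilinear form against unit $L^2$ test functions is the $L^2\to L^2$ operator norm of the kernel, which is not controlled by the cut norm; e.g.\ a kernel concentrated on a very small box can have tiny cut norm but pair to $\Theta(1)$ against normalized indicators of that box.) In the sparse regime the weights $P(\cdot,x_{i-2})$ are unbounded, so the honest $L^\infty$ bound is useless and your replacement has no justification. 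A secondary gap: the mixed compositions arising in intermediate hybrids, such as $\norm{f_{i-1,i}\circ g_{i,i+1}}_2$, are controlled by $\EE_{x}(\EE_y f_{i-1,i}(x,y))^2$, which is not bounded by the hypotheses (the cut-norm condition bounds only the average degree, not its second moment).

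The paper's proof resolves exactly this obstruction by truncating twice before any cut-norm estimate is made: first it restricts $f_{ij}$ to the set $S_{ij}$ of vertices with normalized degree at most $\epsilon^{-2}$ (costing $O(\epsilon^2)$ in cut norm, by an $L^1$ argument), which makes \cref{lem:compose-cut} applicable and yields $\norm{f_{ij}\circ f'_{jk} - g_{ij}\circ g_{jk}}_\square \le 4\epsilon^2$; second, it caps the composed function $(f_{ij}\circ f'_{jk})$ at a level $A$, using Markov and the hypothesis $\norm{f_{ij}\circ f_{jk}}_2^2 \le C$ to bound the $L^1$ mass of the excess by $C/A$. Only after both truncations are the compositions bounded functions, at which point the $C_5$ count collapses to a triangle count on $(x_1,x_3,x_5)$ and the dense triangle counting lemma (\cref{lem:dense-triangle}) applies. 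To repair your write-up you would need to import this truncation mechanism; without it, steps (i)--(ii) of your plan do not go through.
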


\begin{remark}
For an $n$-vertex graph with edge density on the order of $p$, 
the hypothesis \cref{eq:sparse-c4-cond} translates into a $O(n^4p^4)$ upper bound on the number of $4$-cycles with vertices in $V_{i-1}, V_i, V_{i+1}, V_i$ in turn, since
\[
\norm{f_{01} \circ f_{12}}_2^2 
= \EE_{x_0,x_1,x'_1,x_2} f_{01}(x_0,x_1) f_{01}(x_0,x'_1) f_{12}(x_1,x_2) f_{12}(x'_1,x_2).
\]	
Applying the Cauchy--Schwarz inequality, we have
\begin{align*}
\norm{f_{01} \circ f_{12}}_2^2 
&= \EE_{x_1,x'_1} [(\EE_{x_0} f_{01}(x_0,x_1) f_{01}(x_0,x'_1)) (\EE_{x_2} f_{12}(x_1,x_2) f_{12}(x'_1,x_2))]
\\
&\le \left(\EE_{x_1,x'_1} [(\EE_{x_0} f_{01}(x_0,x_1) f_{01}(x_0,x'_1))^2]\right)^{1/2} \left(\EE_{x_1,x'_1} [(\EE_{x_2} f_{12}(x_1,x_2) f_{12}(x'_1,x_2))^2]\right)^{1/2}
\\
&= \left(\EE_{x_0, x'_0, x_1,x'_1} [f_{01}(x_0,x_1)f_{01}(x'_0,x_1) f_{01}(x_0,x'_1) f_{01}(x'_0,x'_1)]\right)^{1/2} \\
&\qquad \cdot 
\left(\EE_{x_1, x'_1, x_2,x'_2} [f_{12}(x_1,x_2)f_{12}(x'_1,x_2) f_{12}(x_1,x'_2) f_{12}(x'_1,x'_2)]\right)^{1/2}
\\
&= t(K_{2,2}, f_{01})^{1/2} t(K_{2,2}, f_{12})^{1/2}.
\end{align*}
So we could replace \cref{eq:sparse-c4-cond} by the stronger condition that $t(K_{2,2}, f_{i,i+1}) \le C$ for each $i$, i.e., a $O(n^4 p^4)$ bound on the number of $4$-cycles in the bipartite graph between $V_i$ and $V_{i+1}$ for every $i$.
\end{remark}

Let us collect some basic facts about the cut norm. It is a standard fact that the definition of the cut norm~\cref{eq:cut-set} is equivalent to
\begin{equation} \label{eq:cut-fn}
\norm{f}_\square = 
\sup_{\substack{a \colon V_1 \to [0,1]  \\ b \colon V_2 \to [0,1]}} \abs{\EE_{x \in V_1, y \in V_2} f(x,y) a(x) b(y)},
\end{equation}
where now $a$ and $b$ are measurable functions taking values in $[0,1]$. The equivalence can be seen since the expectation expression in \cref{eq:cut-fn} is bilinear in $a$ and $b$ and thus its extrema must occur when $a$ and $b$ take values from the set $\{0,1\}$. But then $a$ and $b$ may be thought of as indicator functions of sets $A$ and $B$, returning us to the earlier definition~\cref{eq:cut-set} of the cut norm.

\begin{lemma} \label{lem:compose-cut}
	Let $f_{12} \colon V_1 \times V_2 \to \RR$ and $f_{23} \colon V_2 \times V_3 \to [0,\infty)$ with $\EE_{x_3 \in V_3} f_{23}(x_2, x_3) \le M$ for all $x_2 \in V_2$. Then
	\[
	\norm{f_{12} \circ f_{23}}_\square \le  M \norm{f_{12}}_\square.
	\]
\end{lemma}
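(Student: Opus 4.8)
\textbf{Proof plan for Lemma~\ref{lem:compose-cut}.} The plan is to expand the cut norm of the composition using the functional form~\cref{eq:cut-fn} and to absorb the averaging over $V_2$ into the test functions. Concretely, I would start by fixing test functions $a\colon V_1 \to [0,1]$ and $c\colon V_3 \to [0,1]$ and writing out
\[
\abs{\EE_{x_1,x_3} (f_{12}\circ f_{23})(x_1,x_3)\, a(x_1) c(x_3)}
= \abs{\EE_{x_1,x_2,x_3} f_{12}(x_1,x_2)\, f_{23}(x_2,x_3)\, a(x_1) c(x_3)}.
\]
The idea is to integrate out $x_3$ first: define $b(x_2) := \EE_{x_3\in V_3} f_{23}(x_2,x_3)\, c(x_3)$, which by the hypothesis $\EE_{x_3} f_{23}(x_2,x_3)\le M$ and $0\le c\le 1$ satisfies $0 \le b(x_2) \le M$ for all $x_2$. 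Then the quantity above equals $\abs{\EE_{x_1,x_2} f_{12}(x_1,x_2)\, a(x_1)\, b(x_2)}$.

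Now $b/M$ takes values in $[0,1]$, so by~\cref{eq:cut-fn} applied to $f_{12}$ with test functions $a$ and $b/M$ we get $\abs{\EE_{x_1,x_2} f_{12}(x_1,x_2)\, a(x_1)\, (b(x_2)/M)} \le \norm{f_{12}}_\square$, and multiplying through by $M$ gives the bound $M\norm{f_{12}}_\square$ for our fixed choice of $a,c$. Taking the supremum over all such $a$ and $c$, and using once more that the cut norm of $f_{12}\circ f_{23}$ is computed by~\cref{eq:cut-fn}, yields $\norm{f_{12}\circ f_{23}}_\square \le M\norm{f_{12}}_\square$, as desired.

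There is essentially no obstacle here; the only points requiring a word of care are that $f_{23}$ is assumed nonnegative (so that $b$ is nonnegative and the division by $M$ lands in $[0,1]$ rather than $[-1,1]$), and that Fubini/Tonelli is available to reorder the expectations (guaranteed by nonnegativity of $f_{23}$ together with the boundedness of $a$, $c$; one may first assume $f_{12}$ is bounded or integrable as needed, or simply note both sides are monotone limits under truncation of $f_{12}$). If one only had $\abs{f_{23}}$ bounded in the stated average, the same argument gives the bound with $\norm{f_{12}}_\square$ replaced appropriately, but the stated nonnegativity hypothesis is exactly what makes the clean constant $M$ work.
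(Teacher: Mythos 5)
Your proposal is correct and is essentially identical to the paper's own proof: both absorb the $x_3$-average of $f_{23}$ weighted by the test function on $V_3$ into a new test function on $V_2$ bounded by $M$, and then invoke the functional characterization \cref{eq:cut-fn} of the cut norm. No further comment is needed.
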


\begin{proof}
	For any $u_1 \colon V_1 \to [0,1]$ and $u_3 \colon V_3 \to [0,1]$, one has
	\begin{align*}
	\EE_{x_1, x_3} u_1(x_1) (f_{12} \circ f_{23})(x_1, x_3) u_3(x_3) &= \EE_{x_1, x_2, x_3} u_1(x_1) f_{12}(x_1, x_2) f_{23}(x_2, x_3) u_3(x_3)
	\\
	&= M \cdot \EE_{x_1, x_2} u_1(x_1) f_{12}(x_1, x_2) u_2(x_2),
	\end{align*}
	where 
	\[
	u_2(x_2) =\frac{1}{M} \EE_{x_3} f_{23}(x_2, x_3) u_3(x_3) \leq \frac{1}{M} \EE_{x_3} f_{23}(x_2, x_3) \leq 1. 
	\]
	The claim then follows from \cref{eq:cut-fn} since $u_1(x_1) \in [0,1]$ and $u_2(x_2) \in [0,1]$.
\end{proof}

\begin{lemma}[Triangle counting lemma for dense graphs] \label{lem:dense-triangle}
	Let $f_{ij} \colon V_i \times V_j \to \RR$ for $ij \in \{12,13,23\}$ and assume that $f_{12}$ and $f_{13}$ take only nonnegative values. Then
	\[
	\EE_{x_1 \in V_1,x_2 \in V_2, x_3 \in V_3} f_{12}(x_1,x_2) f_{13}(x_1,x_3) f_{23}(x_2,x_3) 
	\le \norm{f_{12}}_\infty \norm{f_{13}}_\infty \norm{f_{23}}_\square.
	\]
\end{lemma}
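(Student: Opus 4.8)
The plan is to peel off the two bounded factors $f_{12}$ and $f_{13}$ by absorbing them into test functions, reducing the whole expression to a single application of the cut norm of $f_{23}$. First I would assume without loss of generality that $\norm{f_{12}}_\infty$ and $\norm{f_{13}}_\infty$ are both strictly positive (otherwise the left-hand side vanishes since those two functions are nonnegative, and the inequality is trivial), and rescale by writing $\tilde f_{12} = f_{12}/\norm{f_{12}}_\infty$ and $\tilde f_{13} = f_{13}/\norm{f_{13}}_\infty$, so that both $\tilde f_{12}, \tilde f_{13} \colon V_i \times V_j \to [0,1]$. It then suffices to prove
\[
\EE_{x_1, x_2, x_3} \tilde f_{12}(x_1,x_2)\, \tilde f_{13}(x_1,x_3)\, f_{23}(x_2,x_3) \le \norm{f_{23}}_\square.
\]

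The key step is to condition on $x_1$ and recognize the inner expectation as a cut-norm pairing. For a fixed $x_1 \in V_1$, set $a_{x_1}(x_2) := \tilde f_{12}(x_1, x_2) \in [0,1]$ and $b_{x_1}(x_3) := \tilde f_{13}(x_1, x_3) \in [0,1]$; these are legitimate $[0,1]$-valued test functions on $V_2$ and $V_3$ respectively. By the functional form of the cut norm~\cref{eq:cut-fn},
\[
\abs{\EE_{x_2, x_3} f_{23}(x_2, x_3)\, a_{x_1}(x_2)\, b_{x_1}(x_3)} \le \norm{f_{23}}_\square
\]
for every $x_1$. Since $\tilde f_{12}$ and $\tilde f_{13}$ are nonnegative, the triple expectation equals $\EE_{x_1}\big[\EE_{x_2,x_3} f_{23}(x_2,x_3) a_{x_1}(x_2) b_{x_1}(x_3)\big]$, and averaging the pointwise-in-$x_1$ bound over $x_1 \in V_1$ gives the claim. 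Multiplying back through by $\norm{f_{12}}_\infty \norm{f_{13}}_\infty$ yields the lemma.

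I do not anticipate a genuine obstacle here — the only point requiring a little care is the nonnegativity hypothesis on $f_{12}$ and $f_{13}$, which is what lets us (i) treat the rescaled functions as valid $[0,1]$ test functions rather than worrying about signs, and (ii) pull the $x_1$-average outside an absolute value without loss. Note that $f_{23}$ itself is allowed to take negative values, which is why the statement is phrased with $\norm{f_{23}}_\square$ and an absolute value inside the cut-norm definition rather than with a signed supremum; this causes no difficulty since the cut-norm bound~\cref{eq:cut-fn} already accommodates it.
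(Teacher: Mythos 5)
Your proposal is correct and is essentially the paper's own argument: fix $x_1$, view the slices of $f_{12}$ and $f_{13}$ (after normalizing by their sup norms) as $[0,1]$-valued test functions in the functional form \cref{eq:cut-fn} of the cut norm, and then average over $x_1$. The paper states this in one line; you have simply written out the details.
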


\begin{proof}
	The above inequality is true if we fix any choice of $x_1$ on the LHS, due to the definition of $\norm{f_{23}}_\square$ in (\ref{eq:cut-fn}), and hence it remains true if we take the expectation over $x_1$.
\end{proof}

\begin{proof}[Proof of \cref{thm:sparse-c5-count}]
	For fixed $i,j \in \ZZ/5\ZZ$ with $i-j = \pm 1$ and $f_{ij} \colon V_i \times V_j \to [0,\infty)$, write
	\[
	f'_{ij}(x_i, x_j) =  f_{ij}(x_i, x_j)1_{S_{ij}}(x_i),
	\]
	where
	\[
	S_{ij} = \{x_i \in V_i : \EE_{x_j \in V_j} f_{ij}(x_i,x_j) \le \epsilon^{-2}\}.
	\]
Since $f_{ij}, g_{ij} \geq 0$, note that $\norm{f_{ij}}_1-\norm{g_{ij}}_1=\mathbb{E}_{x_i,x_j}(f_{ij}-g_{ij})$, which, by definition, is at most $\norm{f_{ij} - g_{ij}}_\square$. Hence, writing $\mu(\cdot)$ for the measure of a subset of $V_i$ (where $\mu(V_i) = 1$), we have 
	\[
	\mu(V_i \setminus S_{ij}) \epsilon^{-2} \le \norm{f_{ij}}_1 \le 
	\norm{g_{ij}}_1 + \norm{f_{ij} - g_{ij}}_\square
	\le 2.
	\]
	Therefore, $\mu(V_i \setminus  S_{ij}) \le 2\epsilon^2$ and 
	\begin{align*}
	\snorm{f'_{ij} - g_{ij}}_\square 
	&\le \snorm{(f_{ij} -g_{ij})1_{S_{ij} \x V_j}}_\square + \snorm{g_{ij} 1_{(V_i \setminus S_{ij}) \x V_j}}_\square 
	\\
	&\le \norm{f_{ij}-g_{ij}}_\square + \mu(V_i \setminus S_{ij})
	\\
	&\le 3\epsilon^2.
	\end{align*}
	
	Let $i,j,k$ be three consecutive elements of $\ZZ/5\ZZ$ (in ascending or descending order). 
	By \cref{lem:compose-cut}, 
	\[
	\norm{(f_{ij}-g_{ij}) \circ f'_{jk}}_\square \le \epsilon^{-2} \norm{f_{ij}-g_{ij}}_\square \le \epsilon^2
	\]
	and
	\[
	\norm{g_{ij} \circ (f'_{jk} - g_{jk})}_\square \le \norm{f'_{jk} - g_{jk}}_\square \le 3\epsilon^2.
	\]	
	Putting the above two inequalities together, we obtain
	\[
	\norm{f_{ij} \circ f'_{jk} - g_{ij} \circ g_{jk}}_\square
	\le 
	\norm{(f_{ij} - g_{ij}) \circ f'_{jk}}_\square  + 
	\norm{g_{ij} \circ (f'_{jk} - g_{jk})}_\square \le 4\epsilon^2.
	\]
	
	For every $A > 0$ and any function $h(z)$, define $h_{\le A}(z)=h(z)$ if $h(z) \leq A$ and $0$ otherwise. Then we have
	\begin{align*}
	\norm{(f_{ij} \circ f'_{jk})_{\le A} - g_{ij} \circ g_{jk}}_\square 
	&
	\le 
	\norm{f_{ij} \circ f'_{jk} - g_{ij} \circ g_{jk}}_\square + \norm{(f_{ij} \circ f_{jk})_{> A}}_1
	\\
	&\le 4\epsilon^2 + A^{-1} \norm{f_{ij} \circ f_{jk}}_2^2 
	\\&
	\le 4\epsilon^2 + C A^{-1}.
	\end{align*}
	In particular, setting $A = \epsilon^{-1}$, we obtain
	\begin{equation}\label{eq:f34-f'45}
		\norm{(f_{34} \circ f'_{45})_{\le \epsilon^{-1}} - g_{34} \circ g_{45}}_\square \le 5C \epsilon 
	\end{equation}
	and setting $A = \epsilon^{-2}$, we obtain
	\begin{equation} \label{eq:f32-f'21}
		\norm{(f_{32} \circ f'_{21})_{\le \epsilon^{-2}} - g_{32} \circ g_{21}}_\square \le 5C \epsilon^2.
	\end{equation}
	
	Repeatedly applying \cref{lem:dense-triangle} in the $3$rd, $4$th and $5$th inequalities, we have
	\begin{align*}
	\text{LHS of \cref{eq:sparse-c5-count}} 
	&\ge  \EE_{x_1,x_3,x_5} (f_{34} \circ f'_{45})(x_3,x_5) (f_{32} \circ f'_{21})(x_3,x_1) f_{51}(x_5,x_1) &&\text{[pointwise bound]}
	\\
	&\ge \EE_{x_1,x_3,x_5} (f_{34} \circ f'_{45})_{\le \epsilon^{-1}}(x_3,x_5) (f_{32} \circ f'_{21})_{\le \epsilon^{-2}}(x_3,x_1) f_{51}(x_5,x_1) &&\text{[pointwise bound]}
	\\
	&\ge \EE_{x_1,x_3,x_5} (f_{34} \circ f'_{45})_{\le \epsilon^{-1}}(x_3,x_5) (f_{32} \circ f'_{21})_{\le \epsilon^{-2}}(x_3,x_1) g_{51}(x_5,x_1) - \epsilon &&\text{[since $\norm{f_{51}-g_{51}}_\square \le \epsilon^4$]}
	\\
	& \ge \EE_{x_1,x_3,x_5} (f_{34} \circ f'_{45})_{\le \epsilon^{-1}}(x_3,x_5) (g_{32} \circ g_{21})(x_3,x_1) g_{51}(x_5,x_1) - 6C\epsilon && \text{[by \cref{eq:f32-f'21}]} 
	\\
	& \ge \EE_{x_1,x_3,x_5} (g_{34} \circ g_{45})(x_3,x_5) (g_{32} \circ g_{21})(x_3,x_1) g_{51}(x_5,x_1)- 11C \epsilon && \text{[by \cref{eq:f34-f'45}]}
	\\
	&= \text{RHS of \cref{eq:sparse-c5-count}}.
	&& \qedhere
	\end{align*}
\end{proof}

\section{Removal lemmas in sparse graphs}

Our proof of the sparse removal lemma uses, as a black box, the usual graph removal lemma (for dense graphs). We use the following formulation of the graph removal lemma allowing both vertex and edge weights. A standard sampling argument shows that this formulation (at least with a finite $V$) is equivalent to the more usual version without weights. Alternatively, the standard proof of the graph removal lemma using Szemer\'edi's regularity lemma can easily be amended to give the weighted version.

\begin{theorem}[Weighted graph removal lemma, dense setting] \label{thm:rem-dense}
	For every graph $F$ and $\epsilon >0$, there exists some $\delta > 0$ such that for a probability space $V$ and measurable symmetric $g \colon V \times V \to [0,1]$ with $t(F, g) \le \delta$, 
	there exists a measurable symmetric subset $A \subseteq V \times V$ such that 
	$t(F, g 1_A) = 0$ and 
	$\snorm{g - g 1_A}_1 \le \epsilon$.
\end{theorem}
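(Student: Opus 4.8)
Proof proposal for Theorem~\ref{thm:rem-dense} (the weighted dense graph removal lemma).

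The plan is to reduce the weighted statement to the classical (unweighted) graph removal lemma via a sampling argument, then lift the conclusion back. First I would recall the unweighted removal lemma in the following form: for every graph $F$ on $k$ vertices and every $\epsilon > 0$, there is $\delta_0 = \delta_0(F,\epsilon) > 0$ such that any $N$-vertex graph containing at most $\delta_0 N^k$ copies of $F$ can be made $F$-free by deleting at most $\tfrac{\epsilon}{2} N^2$ edges. I would set $\delta$ small in terms of $\delta_0$ (say $\delta = \delta_0 / 2$) and also choose a sample size $N = N(F,\epsilon,\delta)$ large enough that various concentration estimates below hold with positive probability.

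The core of the argument is a two-stage random construction. Given $(V, g)$ with $t(F,g) \le \delta$, sample $N$ points $v_1, \dots, v_N$ i.i.d.\ from $V$, and then form a random graph $G$ on vertex set $[N]$ by including each edge $ij$ independently with probability $g(v_i, v_j)$ (using $g(v_i,v_j) = g(v_j,v_i)$ by symmetry; ignore the diagonal). A standard first-moment computation gives $\EE[\#\{\text{copies of }F\text{ in }G\}] \le t(F,g)\,N^k + O(N^{k-1})$, which is at most $\delta_0 N^k$ once $N$ is large. By Markov's inequality, with probability at least $1/2$ the sampled graph $G$ has at most $2\delta N^k \le \delta_0 N^k$ copies of $F$, so we may fix such an outcome. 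Apply the unweighted removal lemma: there is a set $D \subseteq E(G)$ of at most $\tfrac{\epsilon}{2} N^2$ edges whose removal makes $G$ $F$-free.

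The remaining step is to transfer this sparse deletion set $D \subseteq \binom{[N]}{2}$ back to a measurable symmetric set $A \subseteq V \times V$ with $t(F, g1_A) = 0$ and $\snorm{g - g1_A}_1 \le \epsilon$. Here I would use a second layer of sampling/averaging: roughly, for a measurable symmetric decomposition of $V \times V$ one defines, for each pair $(x,y)$, the probability (over an independent re-sampling of the ambient points landing near $x$ and $y$) that the corresponding edge would survive, and takes $A$ to be the complement of the ``deleted with high probability'' region; alternatively, and more cleanly, one observes that $1_{E(G) \setminus D}$ is itself a (random) symmetric $\{0,1\}$-valued function on the finite set $[N]$ whose pullback along $V \to [N]$ (induced by the sample) is a candidate for $g1_A$, and one shows via a second-moment argument that with positive probability this pullback is simultaneously $F$-free in density (i.e.\ $t(F, \cdot) = 0$, which is automatic since the finite graph is literally $F$-free and $t(F,\cdot)$ of a pullback of an $F$-free graph vanishes) and satisfies $\EE_{x,y}\, g(x,y) 1_{(x,y)\notin A} \le \epsilon$ (by concentration of $|D|/\binom{N}{2}$ around its expectation and the bound $|D| \le \tfrac{\epsilon}{2}N^2$, with room to spare for the $O(1/N)$ sampling errors). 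The paper also notes the alternative route: rerun the standard Szemer\'edi-regularity proof of the removal lemma directly with weights, where $g$ plays the role of the edge density, regularize $g$ into $O_\epsilon(1)$ parts, delete all pairs of low density or irregular pairs, and use the (weighted, dense) triangle/$F$-counting lemma --- e.g.\ an iterated application of Lemma~\ref{lem:dense-triangle} --- to conclude no copy of $F$ survives; this avoids sampling entirely.

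The main obstacle, in the sampling approach, is the transfer direction: ensuring that the deletion set $D$ in the finite graph can be realized as (the indicator of) a genuinely measurable symmetric subset of $V \times V$ whose $L^1$ cost against $g$ is controlled, rather than merely controlling edge counts. The cleanest fix is to not insist on a ``canonical'' $A$ but to exploit that $t(F,\cdot)=0$ is preserved under pullback of $F$-free finite graphs, so that only the $L^1$ bound requires the probabilistic argument; if one instead prefers the weighted-regularity proof, the main point is simply to verify that the standard counting-lemma estimate survives when edge densities are replaced by the $[0,1]$-valued weight function $g$, which is exactly what Lemma~\ref{lem:dense-triangle} (and its $F$-analogue) supplies.
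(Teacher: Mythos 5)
The paper does not actually prove this statement: it is quoted as a known black box, with a one-sentence remark that it follows from the usual unweighted removal lemma ``by a standard sampling argument (at least with a finite $V$)'' or by amending the Szemer\'edi-regularity proof to allow weights. Your proposal elaborates both routes, so at the level of strategy you match the paper. The regularity route you mention at the end is the one that goes through cleanly for a general probability space $V$ (and is the safer one to commit to); note only that \cref{lem:dense-triangle} is just the ``error-term'' half of the counting lemma --- one also needs the complementary lower bound $t(F, g_{\cP}) \gtrsim \prod_{uv \in E(F)} d_{uv}\prod_{v} \mu(U_v)$ on tuples of regular, dense parts, and hence full Szemer\'edi regularity rather than the weak regularity of \cref{thm:reg-fn}.

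The sampling route, which is the one you actually develop, has a genuine gap at exactly the step you flag. First, there is no map $V \to [N]$ ``induced by the sample'' (the sample is a map $[N] \to V$), so the pullback you propose is undefined unless $V$ is finite and each vertex is blown up into a cluster proportional to its weight --- which is in fact all the paper needs for \cref{prop:dpc-removal}, and is why it hedges with ``at least with a finite $V$.'' Second, even in the finite blown-up setting, defining $A$ by a majority vote over cluster pairs does not yield $t(F, g1_A) = 0$: knowing that at least half of the blown-up edges survive between every pair of clusters relevant to a candidate copy of $F$ only gives positive edge density between those clusters, and positive density between every pair of parts does not force a copy of $F$ (already for $F = C_3$ one can have density $1/4$ between each pair of three parts with no triangle). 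Third, $t(F, g1_A) = 0$ asserts the absence of \emph{homomorphic} copies of $F$ in the support of $g1_A$ --- the paper uses exactly this when it deduces $t(C_3, \wt g 1_A) = 0$ from $t(C_5, \wt g 1_A) = 0$ --- which is strictly stronger than the $F$-freeness that deleting $D$ from the sampled graph provides. All three issues are repairable (apply the unweighted removal lemma to every homomorphic image of $F$, and choose the deletion set at the level of regularity-partition cells rather than sampled vertices), but as written the transfer step does not close, so you should either supply that argument or carry out the weighted-regularity proof in full.
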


Now we are ready to prove our main sparse removal lemma. We will first state and prove a version which highlights the hypotheses involved and then show that it implies both \cref{thm:345-removal,thm:5-cycle-rem}.

\begin{proposition}[Removal] \label{prop:dpc-removal}
	For every $\epsilon, K, C > 0$, there exist $n_0, M, \delta > 0$ such that if $n > n_0$, $p \in (C^{-1}n^{-1/2},1]$, and $G$ is an $n$-vertex graph satisfying
	\begin{enumerate}
		\item[(a)] (Dense pairs condition) for every partition of $V(G)$ into $M$ parts $V_1 \cup \cdots \cup V_M$, at most a total of $\epsilon p n^2/4$ edges of $G$ lie between pairs $(V_i, V_j)$ with $e(V_i, V_j) \ge K p \abs{V_i}\abs{V_j}$,
		\item[(b)] (Not too many $4$-cycles) $G$ has at most $(Cpn)^4$ copies of $C_4$, and
		\item[(c)] (Few 5-cycles) $G$ has at most $\delta p^5 n^5$ copies of $C_5$,
	\end{enumerate}
	then $G$ can be made $C_3$-free and $C_5$-free by removing at most $\epsilon p n^2$ edges.
\end{proposition}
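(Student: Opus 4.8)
The plan is to transfer the dense weighted $\{C_3,C_5\}$-removal lemma to the sparse setting using the sparse weak regularity lemma (\cref{thm:reg-fn}), the dense-pairs control (\cref{lem:c4-dense-pairs} and hypothesis (a)), and the sparse $C_5$-counting lemma (\cref{thm:sparse-c5-count}); the dense $C_3$-removal is comparatively easy and will be handled in parallel with the same apparatus. Concretely, let $f = p^{-1}1_{E(G)}$, so $f$ is a symmetric nonnegative function on $V \times V$ with $\EE f = O(1)$. Apply \cref{thm:reg-fn} with a suitably small parameter (something like $\epsilon_0 = \epsilon_0(\epsilon, K, C)$, chosen after the dense removal lemma fixes its $\delta$) to obtain a partition $\cP$ of $V$ into $M = 2^{O(1/\epsilon_0^2)}$ parts with $\norm{(f - f_\cP)1_{f_\cP \le 1}}_\square \le \epsilon_0$. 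Set $g := f_\cP \wedge 1 = \min(f_\cP, 1)$, which is a symmetric function into $[0,1]$. The key approximation is that $\norm{f - g}_\square$ is small \emph{after throwing away the dense pairs}: on the region where $f_\cP \le 1$ we have $|f - g| = |f - f_\cP|$, controlled by the regularity lemma, and the region where $f_\cP > 1$ corresponds exactly to pairs of parts with edge density $\ge p$ (since $f_\cP(x,y) > 1$ means $e(V_i,V_j) > p|V_i||V_j|$), whose total edge contribution is bounded by hypothesis (a) together with \cref{lem:c4-dense-pairs}. So after deleting at most $\epsilon p n^2/4$ edges of $G$ lying in dense pairs — this uses (a) with the partition $\cP$, and here $M$ must be chosen to match (a); one arranges the quantifiers so that the $M$ output by regularity is the $M$ fed into (a) — we may assume $f_\cP \le 1$ everywhere, whence $\norm{f-g}_\square \le \epsilon_0$.

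Next, I verify the hypotheses of the counting lemma. For the $C_4$-condition \cref{eq:sparse-c4-cond}, note $\norm{f \circ f}_2^2 = t(C_4, f)$ up to the diagonal/degenerate terms, and $t(C_4, f) = p^{-4} \cdot (\text{number of homomorphic } C_4\text{'s in } G)/n^4$; hypothesis (b) bounds labelled $C_4$-copies by $(Cpn)^4$, and the degenerate closed-walk contributions (which correspond to walks of length $4$ that are not genuine $4$-cycles, e.g. paths traversed back and forth) contribute at most $O(\text{edges} \cdot n \cdot p^{-4}/n^4) = O(p^{-2}n^{-2}) = O(C^2)$ using $p \ge C^{-1}n^{-1/2}$; so $t(C_4,f) \le C'$ for a constant $C'$ depending only on $C$. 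For the cut-norm condition, I take $\epsilon_0 = \epsilon_1^4$ where $\epsilon_1$ is the counting-lemma parameter, and I need $g \in [0,1]$, which holds by construction. Then \cref{thm:sparse-c5-count} (in its non-multipartite form, the first theorem of \cref{sec:counting}) gives $t(C_5, f) \ge t(C_5, g) - 11 C' \epsilon_1$. Now hypothesis (c) says $G$ has $\le \delta p^5 n^5$ copies of $C_5$, and again degenerate closed $5$-walks contribute negligibly ($O(p^{-5}n^{-5} \cdot \text{edges} \cdot n^2) = O(p^{-3}n^{-3/2}) = O(C^3 n^{-1/2}) \to 0$, plus triangle-with-pendant-edge type terms which are likewise $o(1)$ once $G$ has $o(n^2)$ triangles — but actually we don't know that yet, so more care: these degenerate terms are bounded by $O(p^{-5}n^{-5})$ times (number of closed $5$-walks that are not $5$-cycles), each such walk determined by at most $4$ vertices and some edge choices, giving $O(n^4 p^? )$; a direct count shows this is $O(\delta' p^5 n^5)$-negligible or absorbed). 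Hence $t(C_5, f) = o(1)$ in a quantitative sense: $t(C_5, g) \le 11 C' \epsilon_1 + o(1)$.

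Then I invoke the dense weighted removal lemma \cref{thm:rem-dense} applied to the family $\{C_3, C_5\}$ (one can run it once for $C_5$ and once for $C_3$, or use a version for families; running it for each member and taking the union of the edge sets removed works since zeroing out more of $g$ only helps): choosing $\epsilon_1$ small enough that $11 C' \epsilon_1 + o(1) \le \delta_{\mathrm{rem}}(\{C_3,C_5\}, \epsilon/4)$, where the error from the $C_3$ side is handled identically — here $t(C_3, g) \le t(C_3, f) + (\text{cut-norm error via } \cref{lem:dense-triangle}) $, and $t(C_3,f) = p^{-3}(\text{triangles})/n^3$; but wait, we are not assuming few triangles, so for $C_3$ I instead argue directly: a $C_4$-free-ish graph with few $C_5$'s... no. The cleaner route: apply \cref{thm:rem-dense} only to $F = C_5$ to get a symmetric set $A$ with $t(C_5, g1_A) = 0$ and $\norm{g - g1_A}_1 \le \epsilon/4$, then separately apply \cref{thm:rem-dense} to $F = C_3$. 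For $C_3$ the input bound $t(C_3, g)$ small is \emph{not} available a priori, so in fact \cref{prop:dpc-removal} as stated requires making $G$ both $C_3$- and $C_5$-free; I reconcile this by noting \cref{thm:rem-dense} does not need $t(F,g)$ small to conclude — rereading, it does. So the $C_3$-freeness must come from elsewhere: it must be that the dense-pairs deletion plus a triangle-counting argument shows $G$ already has few triangles. Indeed $G$ has $O((Cpn)^4) = o(n^2 \cdot n^2)$... The honest statement: with $p \le n^{-1/2}\cdot(\text{const})$ forced roughly, $C_4$-few and $C_5$-few graphs have $o(p^3 n^3)$ triangles by a Cauchy--Schwarz/counting argument comparing triangles, $C_4$'s and $C_5$'s — this is the content that makes the $C_3$ part work, and it is the step I expect to be the main obstacle: showing that the hypotheses (b) and (c) force $t(C_3, f)$ (after removing dense pairs) to be small enough to feed into \cref{thm:rem-dense}. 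Granting this, translate $g - g1_A$ back: $\norm{g-g1_A}_1 \le \epsilon/2$ corresponds to at most $\epsilon p n^2 /2$ edges of $G$, and removing those (plus the $\le \epsilon p n^2/4$ dense-pair edges) makes $t(C_3, f1_{A'}) = t(C_5, f1_{A'}) = 0$; finally a $C_5$ or $C_3$ surviving in $G$ minus these edges would be a genuine copy (not homomorphically degenerate, since a cycle has no repeated vertices), contributing positively to $t(C_5, f1_{A'})$ resp. $t(C_3, f1_{A'})$, contradiction. Total edges removed $\le \epsilon p n^2$, as required. The main obstacle, to reiterate, is the bookkeeping that converts the combinatorial hypotheses on $C_4$- and $C_5$-counts into the analytic smallness of $t(C_3, \cdot)$ and $t(C_5, \cdot)$ needed to apply the dense removal lemma, handling all degenerate homomorphism terms using $p \gtrsim n^{-1/2}$, and threading the quantifiers $\epsilon \to \epsilon_1 \to \epsilon_0 \to M \to \delta$ in the correct order.
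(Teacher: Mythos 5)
Your overall architecture (sparse regularity $\to$ dense-pair deletion $\to$ sparse $C_5$-counting $\to$ dense removal) matches the paper's, but there are two genuine gaps, both at points you yourself flag as the main obstacle, and in both cases the resolution you guess at is not the right one. First, the $C_3$-freeness. You propose to show that hypotheses (b) and (c) force $t(C_3,\wt f)$ to be small and then run the dense removal lemma for $C_3$ separately. This cannot work: an edge-disjoint union of $\Theta(n^{3/2})$ triangles has $\Theta(p^3n^3)$ triangles at the relevant density $p\asymp n^{-1/2}$, so $t(C_3,f)=\Theta(1)$, and such graphs are exactly the ones the theorem is designed to handle (cf.\ \cref{cor:C5-free}). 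The paper's resolution is a one-line observation you are missing: for a nonnegative weighted graph $h$, $t(C_3,h)>0$ implies $t(C_5,h)>0$ (map $C_5$ onto a triangle, e.g.\ $1,2,3,4,5\mapsto a,b,c,a,b$), so once the dense removal lemma produces $A$ with $t(C_5,\wt g\, 1_A)=0$, one gets $t(C_3,\wt g\, 1_A)=0$ for free; no triangle count is ever needed.

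Second, the degenerate $C_5$-homomorphisms. You assert that ``a direct count shows this is negligible or absorbed,'' but the count you sketch is garbled and the claim is not available from (a)--(c) alone. A non-injective homomorphism $C_5\to G$ has image on at most four vertices containing an odd closed walk, hence a triangle; the dangerous images are triangles with a pendant edge, whose number is essentially $\sum_{abc\ \mathrm{triangle}}(\deg a+\deg b+\deg c)$ and can reach $\Omega(p^5n^5)$ while the $C_4$ count stays $O(p^4n^4)$ (a near-full-degree vertex on $\Theta(n^{3/2})$ triangles already does this). So $t(C_5,\wt f)$ need not be small even when $G$ has few genuine $5$-cycles, and you cannot feed the raw homomorphism density into the argument. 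The paper's fix is structural rather than enumerative: split each part of $\cP$ into five nearly equal labelled subparts and apply the multipartite counting lemma only to $5$-cycles whose $i$-th vertex lies in a label-$i$ subpart, which forces the five vertices to be distinct; the left side is then bounded by the genuine $C_5$ count from (c), while the right side is still at least $6^{-5}t(C_5,\wt g)$ because $\wt g$ is constant on pairs of parts. A smaller but real issue: you cut off at $f_\cP>1$, i.e.\ density $p$, but hypothesis (a) only controls pairs of density at least $Kp$, so the edges you delete in pairs with $f_\cP\in(1,K]$ are not bounded by (a); you need to regularize $f/K$ and cut off at $K$, as the paper does, so that the deleted dense pairs are exactly those covered by (a) and the averaged function still lands in $[0,1]$ after rescaling.
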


\begin{proof}
The value of $\delta = \delta(\epsilon, K, C)$ will be given later in the proof using \cref{thm:rem-dense}. For now, we simply assume that it has a fixed value.

Write $V = V(G)$ and $f = p^{-1} 1_G \colon V \times V \to [0,p^{-1}]$ for the normalized edge-indicator function of $G$.  Hypothesis (a) implies that $G$ has at most $(K + \epsilon)pn^2/2$ edges. Hence, $\EE f \le K + \epsilon$.

Apply \cref{thm:reg-fn}, the sparse weak regularity lemma, to the function $f/K$ to obtain a partition $\cP$ of $V$ into at most $M = M(K, \epsilon, \delta)$ parts such that
\[
\norm{ (f - f_\cP)1_{f_\cP \le K}}_\square \le K \delta^4,
\]
which can be rewritten as
\[
\snorm{\wt f - \wt g}_\square \le K \delta^4,
\]
where
\[
\wt f = f 1_{f_\cP \le K}
\quad \text{and} \quad 
\wt g = \wt f_{\cP} = f_\cP 1_{f_\cP \le K}.
\]
Note that $\wt g$ can be obtained from $\wt f$ by averaging over pairs of parts of $\cP$.

By hypothesis (a), $G$ has at most $\epsilon pn^2/4$ edges in $\{f_\cP > K\} \subseteq V \times V$, since the latter is precisely the union of pairs $V_i \times V_j$ of $\cP$ with $e(V_i, V_j) > Kp\abs{V_i}\abs{V_j}$.
The graph obtained after removing these edges from $G$ is represented by $\wt f$.

Now we would like to use the 5-cycle counting lemma (\cref{thm:sparse-c5-count}) to deduce that $t(C_5, \wt g)$ must be small from the fact that $G$ has few 5-cycles. This is basically true, but one has to be a bit careful in the application of the $C_5$-counting lemma. The reason is that while we know that $G$ has few 5-cycles, we have not ruled out the possibility that 
the triangles of $G$ give rise to many homomorphic copies of $C_5$. 
We address this somewhat technical issue by splitting each part of $\cP$ arbitrarily into five nearly equal parts labeled by elements of $\ZZ/5\ZZ$ 
	and only considering 5-cycles where the $i$-th vertex is embedded into a part labeled by $i$, 
	so that all five vertices of the cycle are forced to be distinct. 
It is in this step that we need the $n > n_0$ hypothesis in the statement of the theorem, in order to guarantee that all five parts are nonempty.
Indeed, the theorem as stated is false without the $n > n_0$ hypothesis, 
	with $G = K_3$ being a counterexample. 
On the other hand, if we only wish to obtain a $C_5$-free graph, 
	then the $n > n_0$ hypothesis can be trivially removed by taking $\delta$ small enough that for $n \le  n_0$ the hypothesis (c) would already imply that $G$ is $C_5$-free.

For the details, we begin by removing some more edges. Indeed, if some part $V_i$ of the partition $\cP$ has at most $100$ vertices, then we remove all edges from $G$ with at least one vertex in $V_i$. We delete at most $100 M Kpn \le \epsilon pn^2/10$ edges this way, provided that $n > n_0(\epsilon, \delta, K)$ is sufficiently large. From now on, we may therefore assume that all parts of $\cP$ have more than 100 vertices.

Partition each $V_i \in \cP$ arbitrarily into five parts of nearly equal size (differing by at most 1), $V_i = V_{i}^{(1)} \cup \cdots \cup V_i^{(5)}$. 
Let $V^{(r)} = \bigcup_{i \in [M]} V_i^{(r)}$ for each $r \in [5]$. For each $r,s \in [5]$, write
\begin{equation} \label{eq:frs-grs}
f_{rs} = K^{-1} \wt f 1_{V^{(r)} \times V^{(s)}} \quad \text{and} \quad
g_{rs} = K^{-1} \wt g 1_{V^{(r)} \times V^{(s)}}
\end{equation}
(both functions $V \times V \to [0,\infty)$ with the latter taking values in $[0,1]$). Then
\[
\norm{ f_{rs} - g_{rs} }_\square 
= 
K^{-1} \snorm{ (\wt f - \wt g) 1_{V^{(r)} \times V^{(s)}} }_\square
\le K^{-1} \snorm{\wt f - \wt g}_\square \le \delta^4.
\]
By (b), $G$ has at most $(Cpn)^4$ copies of $C_4$.
We would actually like to upper bound the number of homomorphic copies of $C_4$ (i.e., closed walks of length $4$).
Let $r(x,y)$ denote the number of walks of length 2 from $x$ to $y$ in $G$. Then the number of homomorphic copies of $C_4$ in $G$ is at most (here we use the inequality $t^2 \le 4\binom{t}{2} + 1$ for all nonnegative integers $t$)
\[
n^2 + 
2 \sum_{\substack{x,y \in V(G) \\ x\ne y}} r(x,y)^2 \le 
n^2 + 2 \sum_{\substack{x,y \in V(G) \\ x\ne y}}\paren{4\binom{r(x,y)}{2} +1}
=3n^2 +  O\big((Cpn)^4\big) = O\big((Cpn)^4\big),
\]
where the final step uses the hypothesis $p \ge C^{-1}n^{-1/2}$.
After normalization, we obtain $t(C_4, f) = O (C^4)$. Hence,
\[
\norm{ f_{r-1,r} \circ f_{r,r+1}}_2^2 \le K^{-4} t(C_4, f) = O(C^4K^{-4}).
\]
Applying the 5-cycle counting lemma (\cref{thm:sparse-c5-count}) to the functions $f_{r,r+1}$ and $g_{r,r+1}$, with $r \in \ZZ/5\ZZ$, we obtain
\[
\EE_{x_1, \dots, x_5 \in V} \prod_{r=1}^5 f_{r,r+1}(x_r, x_{r+1})
\ge \EE_{x_1, \dots, x_5 \in V} \prod_{r=1}^5 g_{r,r+1}(x_r, x_{r+1}) - O( C^4K^{-4}\delta),
\]
which, by \cref{eq:frs-grs}, can be rewritten as
\begin{equation}
	\label{eq:c5-disjoint}
\EE_{x_1, \dots, x_5 \in V} \prod_{r=1}^5 \wt f(x_r, x_{r+1}) 1_{V^{(r)}}(x_r)
\ge \EE_{x_1, \dots, x_5 \in V} \prod_{r=1}^5 \wt g(x_r, x_{r+1}) 1_{V^{(r)}}(x_r) - O( C^4 K \delta).
\end{equation}
Each $V_i$ has size at least 100, so each of its 5 parts $V_i^{(r)}$, $r \in [5]$, has at least a 1/6-fraction of the vertices. 
Thus, $\sabs{V^{(r)}} \ge \abs{V}/6$ for each $r \in[5]$. 
Note that $\wt g$ is constant on each $V_i \times V_j$. 
It follows that the RHS of \cref{eq:c5-disjoint} is at least $6^{-5} t(C_5, \wt g) - O(C^4K\delta)$. 
On the other hand, the LHS of \cref{eq:c5-disjoint} is $p^{-5}n^{-5}$ times the number of 5-cycles in $G$ with the $r$-th vertex in $V^{(r)}$ for each $r \in [5]$
 and so the LHS of \cref{eq:c5-disjoint} is at most $\delta$ by hypothesis (c) that $G$ has at most $\delta n^5p^5$ copies of $C_5$.
Putting these two bounds together, we obtain
\[
\delta \ge 6^{-5} t(C_5, \wt g) - O( C^4K\delta).
\]
Thus, $t(C_5, \wt g) \lesssim (1 + C^4K)\delta$.
Now apply \cref{thm:rem-dense}, the graph removal lemma, for $C_5$ and choose $\delta = \delta(\epsilon, K, C)$ small enough so that the above upper bound on $t(C_5, \wt g)$ guarantees that there exists some symmetric $A \subseteq V \times V$, which is a union of pairs of parts of the partition $\cP$, such that $\norm{\wt g - \wt g 1_A}_1 \le \epsilon/3$ and $t(C_5, \wt g 1_A) = 0$. 
Note that we need to apply the dense removal lemma to the weighted graph with vertices being the parts of $\cP$ and vertex weights proportional to the sizes of the parts so that the dense removal lemma outputs an $A$ of the desired form.

So $t(C_5, \wt g 1_A) = 0$ and, consequently, $t(C_3, \wt g 1_A) = 0$ as well. Thus, $t(C_5, \wt f 1_A) = 0$ and $t(C_3, \wt f 1_A) = 0$, since if $f$ takes some positive values on $V_i \times V_j$, then so must $g$. Since $\wt g$ is obtained from $\wt f$ by averaging over each pair of parts of $\cP$,
\[
\snorm{\wt f - \wt f 1_A}_1 = \snorm{\wt g - \wt g 1_A}_1 \le \epsilon/3.
\]
To conclude, the graph $G$ can be made $C_3$-free and $C_5$-free by removing at most $\epsilon pn^2$ edges: at most $\epsilon pn^2/4$ edges between pairs of parts $(V_i, V_j)$ with $e(V_i, V_j) \ge K p\abs{V_i}\abs{V_j}$, at most $\epsilon p n^2/10$ edges with at least one endpoint in some part with at most 100 vertices, and at most $\epsilon p n^2/3$ edges outside the set $A$ in the last step above.
\end{proof}

Recall the following equivalent way of stating \cref{thm:345-removal}:
\begin{quote}
	For every $\epsilon > 0$, there exist $n_0$ and $\delta> 0$ such that, 
	for every $n \ge n_0$, every $n$-vertex graph with 
	at most $\delta n^{5/2}$ copies of $C_5$ 
	and 
	at most $\delta n^2$ copies of $C_4$ 
	can be made $C_3$-free and $C_5$-free 
	by deleting at most $\epsilon n^{3/2}$ edges.
\end{quote}

\begin{proof}[Proof of \cref{thm:345-removal}]
	Let $G$ be an $n$-vertex graph with
	at most $\delta n^{5/2}$ copies of $C_5$ 
	and 
	at most $\delta n^2$ copies of $C_4$.
	Applying \cref{lem:c4-dense-pairs},
	we see that for every partition of $V(G)$ into $M$ parts $V_1 \cup \cdots \cup V_M$, 
	the number of edges of $G$ that lie  between ``dense pairs'' $(V_i, V_j)$ with $e(V_i, V_j) \ge K n^{-1/2} \abs{V_i}\abs{V_j}$ is
		$O(n^{3/2}/K + M^2 n + M^{1/4} \delta^{1/4} n^{3/2})$.
	For $p = n^{-1/2}$, this is at most $\epsilon p n^2/4$ 
		provided that 
			$K$ is a sufficiently large constant times $1/\epsilon$,
			$\delta$ is a sufficiently small constant times $\epsilon^4/M$,
			and $n$ is sufficiently large (depending on $\epsilon$ and $M$).
	Thus, condition (a) of \cref{prop:dpc-removal} is satisfied.
	
	Condition (b) of \cref{prop:dpc-removal} is also automatically satisfied with $C = 1$ provided that $\delta < 1$, as is Condition (c) for $\delta$ sufficiently small. It thus follows from \cref{prop:dpc-removal} that one can remove at most $\epsilon n^{3/2}$ edges to make $G$ both $C_3$-free and $C_5$-free.
\end{proof}

\cref{thm:5-cycle-rem} is the $5$-partite version of the sparse $C_5$-removal lemma, 
	where we instead assume that there are at most $\delta n^2$ copies of $C_4$ between any two consecutive vertex sets.

\begin{proof}[Proof of \cref{thm:5-cycle-rem}]
	The proof is nearly identical to the proof of \cref{thm:345-removal}.
	Set $p = n^{-1/2}$ as earlier.
	As before, we apply \cref{lem:c4-dense-pairs} to each bipartite graph $V_i \times V_{i+1}$ to yield that at most $\epsilon pn^2/4$ edges lie between dense pairs for any partition into at most $M$ parts. This verifies condition (a) of \cref{prop:dpc-removal}.
	
	To verify condition (b) of \cref{prop:dpc-removal}, note that the number of 4-cycles spanning three parts $V_{i-1}, V_i, V_{i+1}$ is given by (the sums are taken over all unordered pairs of distinct vertices $x,y \in V_i$ and $\codeg_j(x,y)$ is the number of common neighbors of $x$ and $y$ in $V_j$)
	\[
	\sum_{x \ne y \in V_i} \codeg_{i-1}(x,y) \codeg_{i+1}(x,y)
	\le \left(\sum_{x \ne y \in V_i} \codeg_{i-1}(x,y)^2\right)^{1/2} \left(\sum_{x\ne y \in V_i} \codeg_{i+1}(x,y)\right)^{1/2},
	\]
	where the inequality follows from Cauchy--Schwarz. Using that $t^2 \le 1 + 4\binom{t}{2}$ for all nonnegative integers $t$ and provided that $\delta < 1/8$, we have 
	\[
	\sum_{x \ne y \in V_i} \codeg_{i-1}(x,y)^2
	\le \binom{n}{2} + 4 \sum_{x\ne y \in V_i} \binom{\codeg_{i-1}(x,y)}{2} \le \binom{n}{2} + 4\delta n^2 \le n^2,
	\]
	where we apply hypothesis (a) of \cref{thm:5-cycle-rem} that there are at most $\delta n^2$ $C_4$'s between $V_{i-1}$ and $V_i$. Likewise,
	\[
	\sum_{x \ne y \in V_i} \codeg_{i+1}(x,y)^2 \le n^2.
	\]
	Hence, the number of 4-cycles spanning the vertex sets $V_{i-1}, V_i, V_{i+1}$ is at most $n^2$ for each $i$. Thus, condition (b) of \cref{prop:dpc-removal} is satisfied with $C = 1$.
	
	Finally, condition (c) of \cref{prop:dpc-removal} is satisfied due to hypothesis (b) of \cref{thm:5-cycle-rem}.
	
	The $C_5$-removal claim thus follows. 
	Note that the ``sufficiently large $n$'' condition is  superfluous here, since we can always make $\delta$ smaller to take care of the finite number of potentially exceptional values of $n$.
\end{proof}

\section{Removal lemma corollaries} \label{sec:pf-rem-cor}

Here we prove \cref{cor:o-C5} of the sparse removal lemma~\cref{thm:345-removal}, saying that an $n$-vertex graph with $o(n^2)$ $C_5$'s can be made triangle-free by removing $o(n^{3/2})$ edges.

In each of the following proofs, we let $G$ be the graph in the statement and $G'$ be a subgraph of $G$ whose edge set is the union of a maximal collection of edge-disjoint triangles in $G$. In order to show that $G$ can be made triangle-free by deleting $o(n^{3/2})$ edges, it is sufficient to show that $G'$ has $o(n^{3/2})$ edge-disjoint triangles, which is in turn equivalent to showing that $G'$ can be made triangle-free by deleting $o(n^{3/2})$ edges. This last statement is what we will show.

Let us first prove a corollary of \cref{thm:345-removal} that may be of independent interest.
The \emph{house} graph is depicted below.
\[
\begin{tikzpicture}
		[
			every node/.style={draw, circle, black, fill, inner sep = 0pt, minimum width = 3pt},
			scale=.5,
		]
    \node (00) at (0,0) {};
    \node (01) at (0,1) {};
    \node (10) at (1,0) {};
    \node (11) at (1,1) {};
    \node (t)  at (0.5,1.87) {};
    \draw (01)--(00)--(10)--(11)--(01)--(t)--(11);
\end{tikzpicture}
\]

\begin{corollary} \label{cor:house}
	An $n$-vertex graph with 
	$o(n^2)$ $C_4$'s that extend to houses 
	and
	$o(n^{5/2})$ $C_5$'s 
	can be made triangle-free by deleting $o(n^{3/2})$ edges.
\end{corollary}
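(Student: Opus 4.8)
The plan is to deduce \cref{cor:house} from \cref{thm:345-removal} applied to the subgraph $G'$ described above, whose edges form a maximal collection of edge-disjoint triangles $T_1,\dots,T_t$ of $G$. By the reduction at the start of the section, it suffices to show that $G'$ can be made triangle-free -- indeed $\{C_3,C_5\}$-free -- by deleting $o(n^{3/2})$ edges, and by \cref{thm:345-removal} this follows once we check that $G'$ has $o(n^2)$ copies of $C_4$ and $o(n^{5/2})$ copies of $C_5$. The bound on $5$-cycles is immediate, since $G' \subseteq G$ and $G$ has $o(n^{5/2})$ copies of $C_5$ by hypothesis, so the real task is to bound the number of $4$-cycles in $G'$.

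For that, I would prove that every $4$-cycle of $G'$ extends to a house in $G$; since such a $4$-cycle is in particular a $4$-cycle of $G$, it is then one of the $o(n^2)$ copies of $C_4$ in $G$ that extend to a house, giving the desired bound. To prove the claim, fix a $4$-cycle $C = v_1v_2v_3v_4$ in $G'$. Every edge of $G'$ lies in exactly one of the $T_i$; let $T$ and $T'$ be the triangles of the collection containing the opposite edges $v_1v_2$ and $v_3v_4$, respectively. Since no triangle has four vertices, $T \neq T'$, and hence $T$ and $T'$ are edge-disjoint. If the apex of $T$ (its vertex other than $v_1,v_2$) lies outside $\{v_1,v_2,v_3,v_4\}$, then it forms a house together with $C$, with the roof triangle on the edge $v_1v_2$; likewise for $T'$. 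The only remaining possibility is that the apex of $T$ lies in $\{v_3,v_4\}$ and the apex of $T'$ lies in $\{v_1,v_2\}$. Then $T$ has vertex set $\{v_1,v_2,v_3\}$ or $\{v_1,v_2,v_4\}$, and $T'$ has vertex set $\{v_1,v_3,v_4\}$ or $\{v_2,v_3,v_4\}$, and a quick check of the four combinations shows that in each case $T$ and $T'$ share one of the diagonals $v_1v_3$ or $v_2v_4$ of $C$, contradicting edge-disjointness. This establishes the claim, hence the $o(n^2)$ bound on $4$-cycles in $G'$, and \cref{thm:345-removal} then finishes the proof.

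I expect the short combinatorial claim -- that every $4$-cycle of $G'$ extends to a house -- to be the only real point of the argument; everything else is routine bookkeeping. It is exactly here that the edge-disjointness of the triangle collection defining $G'$ is used, via the observation that two distinct edge-disjoint triangles of the collection cannot be glued onto two opposite edges of a $4$-cycle without overlapping on one of its diagonals.
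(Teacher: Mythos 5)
Your proof is correct and follows essentially the same route as the paper: reduce to the edge-disjoint union of triangles $G'$, observe that every $C_4$ in $G'$ extends to a house, and apply \cref{thm:345-removal}; the paper simply leaves the house-extension claim as ``easy to check,'' whereas you spell it out. One tiny inaccuracy: in two of your four final cases (e.g.\ $T=\{v_1,v_2,v_3\}$ and $T'=\{v_2,v_3,v_4\}$) the shared edge is a side of the $4$-cycle rather than a diagonal, but any shared edge already violates edge-disjointness, so the argument is unaffected.
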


\begin{proof}
	It is easy to check that
	in an edge-disjoint union of triangles, every $C_4$ extends to a house. 
	Following the notation above, 
		since each $C_4$ in $G'$ extends to a house in $G'$, 
		$G'$ has $o(n^2)$ $C_4$'s. 
	Moreover, $G'$ has $o(n^{5/2})$ $C_5$'s, since the same is true in $G$.
	Applying \cref{thm:345-removal} then yields that $G'$ can be made triangle-free by removing $o(n^{3/2})$ edges. 
\end{proof}

\begin{proof}[Proof of \cref{cor:o-C5}]
	Since $G'$ is an edge-disjoint union of triangles, every $C_4$ in $G'$ extends to a house, which contains a $C_5$.
	Moreover, each $C_5$ in $G'$ can arise from at most five different $C_4$'s in this way. 
	Since $G'$ has $o(n^2)$ $C_5$'s, we see that $G'$ has $o(n^2)$ $C_4$'s.
	Thus, \cref{thm:345-removal} implies that $G'$ can be made triangle-free by removing $o(n^{3/2})$ edges.
\end{proof}

\section{Number-theoretic applications} \label{sec:pf-nt}

Suppose that $a_1, \dots, a_5$ are fixed nonzero integers summing to zero and $X_1, \dots, X_5$ are subsets of $[n]$ such that each $X_i$ has $o(n)$ nontrivial solutions to $x_1 + x_2 = x_3 + x_4$ and there are $o(n^{3/2})$ solutions to $a_1 x_1 + \cdots + a_5 x_5 = 0$ with $x_1 \in X_1, \dots, x_5 \in X_5$. Then \cref{thm:transl-invar}, which we now prove, says that we can remove $o(\sqrt{n})$ elements from each $X_i$ to remove all solutions to $a_1 x_1 + \cdots + a_5 x_5 = 0$ with $x_1 \in X_1, \dots, x_5 \in X_5$.

\begin{proof}[Proof of \cref{thm:transl-invar}]
	Embed $X_1, \dots, X_5$ into $\ZZ/N\ZZ$, 
	where $N$ is the smallest integer greater than $(|a_1| + \cdots + |a_5|)n$ (to avoid wraparound issues)
	which is coprime to each of $a_1, \dots, a_5$.
	We consider the 5-partite graph $G$ with vertex sets $V_1, \dots, V_5$, each with elements indexed by $\ZZ/N\ZZ$, and
	edges $(s, s + a_i x_i) \in V_i \times V_{i+1}$ for  all $i$ (mod 5), $s \in \ZZ/N\ZZ$, and $x_i \in X_i$.
	
	Now we verify that $G$ satisfies the hypotheses of the sparse 5-cycle removal lemma, \cref{thm:5-cycle-rem}:
	
	(a) All $4$-cycles lying between $V_i$ and $V_{i+1}$ are of the form 
	\[
	s, \quad  s + a_i x_1, \quad s + a_i(x_1 - x_2), \quad s+a_i(x_1 - x_2 + x_3)
	\]
	and we must have $s = s + a_i(x_1 - x_2 + x_3 - x_4)$ for some $x_4$ to close off the cycle. In order for the $4$-cycle to have distinct vertices, $(x_1, x_2, x_3, x_4) \in X_i^4$ must be a nontrivial solution to $x_1 + x_3 = x_2 + x_4$. But there are $o(N)$ such 4-tuples, while the choice of $s \in \ZZ/N\ZZ$ is arbitrary, so there are $o(N^2)$ $4$-cycles between each pair $(V_i, V_{i+1})$.
	
	(b) The number of 5-cycles in $G$ equals $N$ times the number of solutions to $a_1 x_1 + \cdots + a_5 x_5 = 0$ with $x_1 \in X_1$, \dots, $x_5 \in X_5$, so there are $o(N^{5/2})$ $C_5$'s.
	
	Thus, by \cref{thm:5-cycle-rem},
	$G$ can be made $C_5$-free by removing $o(N^{3/2})$ edges. 
	In each $X_i$, we now remove the element $x_i$ if at least $N/5$ edges of the form $(s, s + a_i x_i)$ have been removed. Since we removed $o(N^{3/2})$ edges from $G$, we remove  $o(\sqrt{N}) = o(\sqrt{n})$ elements from each $X_i$. Let $X'_i$ denote the remaining elements of $X_i$. For any solution to $a_1 x_1 + \dots + a_5x_5 = 0$ with $x_1 \in X_1, \dots, x_5 \in X_5$, consider the $N$ edge-disjoint $5$-cycles in the graph $G$ that arise from this solution. We must have removed at least one edge from each of these $5$-cycles and so we must have removed at least $N/5$ edges of the form $(s, s + a_i x_i)$ for some $i$, which implies that $x_i \notin X'_i$. There must therefore be no solution to $a_1 x_1 + \dots + a_5x_5 = 0$ with $x_1 \in X'_1, \dots, x_5 \in X'_5$, as required.
	\end{proof}

Recall that the following lemma, saying that a Sidon set has $O(n)$ solutions to $a_1 x_1 + a_2 x_2 + a_3 x_3 + a_4 x_4 = 0$ for any nonzero integers $a_1, \dots, a_4$, was needed to derive \cref{thm:avoid-two-eqn2} from \cref{thm:transl-invar}.

\begin{proof}[Proof of \cref{lem:non-distinct}]
Writing
\[
\wh 1_X(t) = \sum_{x \in X} e^{2\pi i xt},
\]
we have, by a standard Fourier identity (easy to see by expansion), that
\begin{align*}
&\hspace{-2em} \abs{\set{(x_1, x_2,x_3,x_4)\in X^4 : a_1 x_1 + a_2 x_2 + a_3 x_3 + a_4x_4 = 0}}
\\
&= \int_0^1 \wh 1_X(a_1t) \wh 1_X(a_2t) \wh 1_X(a_3t) \wh 1_X(a_4t) \, dt
\\
&\le \prod_{j=1}^4 \paren{\int_0^1 |\wh 1_X(a_jt)|^4 \, dt}^{1/4}.
\end{align*}
Moreover,
\[
\int_0^1 |\wh 1_X(a_jt)|^4 \, dt 
= \abs{\set{(x_1, x_2,x_3,x_4)\in X^4 : x_1 + x_2 = x_3 + x_4}}
= 2\abs{X}^2 - \abs{X} = O(n),
\]
since $X \subset [n]$ is a Sidon set.
\end{proof}

\section{Some constructions} \label{sec:constructions}

In the previous section, we deduced our number-theoretic results by starting with a set of integers avoiding solutions to certain equations and building an associated graph to which we could apply our removal lemma. We now use this same idea to prove \cref{prop:C5-construction}, which asserts the existence of an $n$-vertex $C_4$-free graph with $n^{3/2 - o(1)}$ edges where every edge lies in exactly one $5$-cycle. 

\begin{proof}[Proof of \cref{prop:C5-construction} ]
First, we note that there exists a set $X \subseteq [n]$ with $|X| \ge e^{- O(\sqrt{\log n})} n^{1/2}$ with 
\begin{enumerate}
	\item no nontrivial solutions to the equation $a(x_1 - x_2) = b(x_3 - x_4)$ for all $a,b \in \{1,2,3,4,10\}$ \\
	(here a solution is called trivial if $x_1 = x_2$ and $x_3 = x_4$ or $a=b$, $x_1 = x_3$ and $x_2 = x_4$) and
	\item no nontrivial solutions to the equation $x_1 + 2x_2 + 3x_3 + 4x_4 = 10x_5$
	\\
	(here a solution is called trivial if $x_1 = \cdots  = x_5$).
\end{enumerate}
The existence of such a set follows from two ingredients, both essentially noted by Ruzsa~\cite{Ruz93}.
Indeed, sets of size $e^{-O( \sqrt{\log n})} n^{1/2}$ satisfying the first property can be constructed through a minor modification of \cite[Theorem 7.3]{Ruz93}, as noted in \cite{CT14}.
Moreover, a set of size $e^{-O(\sqrt{\log n})}n$ satisfying the second property exists by a standard adaptation~\cite[Theorem 2.3]{Ruz93} of Behrend's construction~\cite{Beh46}. 
Since the second property is translation invariant,
taking the intersection of a random translation of the second set with the first set gives a set with the claimed size satisfying both properties.	

Let $N = 60n + 1$. Let $V_1, \dots, V_5$ be vertex sets each with vertices indexed by $\ZZ/N\ZZ$. Add a $5$-cycle $(s, s+ x, s+3x, s+ 6x, s+10x) \in V_1 \times \cdots \times V_5$ to the graph for each $s \in \ZZ/N\ZZ$ and $x \in X$. 
This construction is similar to the construction used in the proof of \cref{thm:transl-invar}.

We now show that this graph has all of the required properties. First note that the cycles are edge-disjoint, since if, for instance, $s + 3x  = s' + 3 x'$ and $s + 6x = s' + 6 x'$, then $s = s'$ and $x = x'$, where we used that $N$ is coprime to $\{1, 2, 3, 4, 10\}$. Moreover, there are no further $C_5$'s in this graph, since any other such 5-cycle would give a nontrivial solution to the equation $x_1 + 2 x_2 + 3 x_3 + 4 x_4 = 10 x_5$ in $X$.

To see that there are no $C_4$'s, note that there are no $C_4$'s between any pair of vertex sets since $X$ is Sidon. Moreover, there are no $C_4$'s across three vertex sets, since, for instance, any $4$-cycle spanning $V_2, V_3, V_4$ would induce a nontrivial solution to the equation $2(x_1 - x_2) = 3(x_3 - x_4)$.
\end{proof}

\begin{remark}
	We do not know how to show that the exponent in \cref{cor:C5-free} is best possible. Recall the statement, that every $n$-vertex $C_5$-free graph can be made triangle-free by deleting $o(n^{3/2})$ edges. To match this with a lower bound,
	we would like to construct a tripartite graph between sets of order $n$ which is the union of $n^{3/2 - o(1)}$ edge-disjoint triangles but containing no $C_5$. 
	Following the strategy above, this would require a set $X \subseteq [n]$ of size $n^{1/2 - o(1)}$ which satisfies properties similar to (1) and~(2), one particular case being that there should be no nontrivial solutions to the equation $x + 2y + 2z = 2u + 3w$. 
	At present, we do not know how to construct such a set satisfying even this latter property on its own. In fact, it is entirely plausible that no such set exists.
\end{remark}

Finally, we prove \cref{prop:construction-few-C5},
    which says that there exist $n$-vertex graphs with $o(n^{2.442})$ $C_5$'s that cannot be made triangle-free by deleting $o(n^{3/2})$ edges.
    
\begin{proof}[Proof of \cref{prop:construction-few-C5}]
    Let $G$ be the $m$-th tensor power of a triangle.
    In other words, its vertices can be labeled by $\FF_3^m$ and two vertices are adjacent iff they differ in every coordinate. 
    Note that every edge of $G$ lies in a unique triangle and this graph has $6^{m-1}$ such triangles.
    
    Write $\hom(H, G)$ for the number of homomorphisms from $H$ to $G$. 
    The number of closed walks of length $k$ in $C_3$ is equal to the $k$-th moment of the eigenvalues of the adjacency matrix of $C_3$ and so  $\hom(C_k, C_3) = 2^k + 2(-1)^k$. 
    Thus,
    \[
    \hom(C_3, G) = \hom(C_3, C_3)^m = 6^m,
    \]
    \[
    \hom(C_4, G) = \hom(C_4, C_3)^m = 18^m,
    \]
    and
    \[
    \hom(C_5, G) = \hom(C_5, C_3)^m = 30^m.
    \]
    
    Let $G'$ be the graph obtained from $G$ by keeping every triangle independently with probability $p  = (\sqrt{3}/2)^m$, so that in expectation $p 6^{m-1} = \Theta(3^{3m/2})$ triangles remain in $G'$.
    
    To estimate the expected number of $C_5$ in $G'$, note that
    the number of $C_5$ that intersect exactly four triangles of $G$ is $O(18^m)$, since every such $C_5$ extends to a house, which contains a $C_4$.
    Furthermore, it is impossible for a $C_5$ in $G$ to intersect at most three triangles, as every edge of $G$ is contained in exactly one triangle. Thus, the expected number of $C_5$ in $G'$ is $O(p^5 30^m + p^4 18^m) = O((3^{7/2}\cdot5/16)^m) = o(3^{2.442m})$.
    
    Therefore, by Markov's inequality, we see that with positive probability $G'$ is a graph on $n=3^m$ vertices with $o(n^{2.442})$ $C_5$'s which is an edge-disjoint union of $\Theta(n^{3/2})$ triangles.
\end{proof}

\section*{Acknowledgments}
We would like to thank Jacques Verstra\"ete and J\'ozsef Solymosi for helpful comments.

\appendix

\section{Proof of the sparse weak regularity lemma} \label{sec:pf-reg}

Here we prove \cref{thm:reg-fn}, our sparse weak regularity lemma.
As with nearly all proofs of regularity lemmas, 
	we keep track of some ``energy'' function and show, 
	by using a certain defect inequality, 
	that the energy must increase significantly at every iteration of a partition refinement process. 
	As in Scott~\cite{Sco11}, the energy of a partition will be defined using the convex function
\[
\Phi(x) := \begin{cases}
	x^2 & \text{if } 0 \le x \le 2 \\
	4x - 4 & \text{if } x > 2.
\end{cases}
\]

The defect inequality is now captured by the following lemma.

\begin{lemma} \label{lem:defect}
	Every real-valued random variable $X \ge 0$ with expectation $\mu \le 1$ satisfies
	\[
		\frac14 (\EE|X - \mu|)^2 \le \EE \Phi(X) - \Phi(\EE X).
	\]
\end{lemma}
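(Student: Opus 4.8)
The plan is to reduce the statement to the elementary quadratic case by exploiting the fact that the hypothesis $\mu := \EE X \le 1$ forces the \emph{downward} fluctuations of $X$ to live inside the region $[0,2]$ on which $\Phi(x) = x^2$. First I would note that $\Phi$ is convex and continuously differentiable on $[0,\infty)$ (the one-sided derivatives at $x=2$ both equal $4$), with $\Phi'(\mu) = 2\mu$ since $\mu \le 1 < 2$. Put $Y := X - \mu$, so that $\EE Y = 0$, and introduce the defect function
\[
\psi(y) := \Phi(\mu + y) - \Phi(\mu) - 2\mu y, \qquad y \ge -\mu .
\]
By convexity of $\Phi$ we have $\psi \ge 0$ everywhere, and since $\EE Y = 0$,
\[
\EE\,\psi(Y) = \EE\,\Phi(X) - \Phi(\mu) - 2\mu\,\EE Y = \EE\,\Phi(X) - \Phi(\EE X).
\]
Thus it suffices to prove $\EE\,\psi(Y) \ge \tfrac14 \big(\EE|Y|\big)^2$.

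The key observation is that on the event $\{Y < 0\}$ one has $0 \le X < \mu \le 1 < 2$, so both $X$ and $\mu$ lie in $[0,2]$; hence $\Phi(X) = X^2$, $\Phi(\mu) = \mu^2$, and expanding gives $\psi(Y) = X^2 - \mu^2 - 2\mu(X-\mu) = (X-\mu)^2 = Y^2$. Writing $Y^- := \max(-Y, 0)$, it follows that $\psi(Y) \ge (Y^-)^2$ pointwise: this is the identity just derived when $Y < 0$, and it is the trivial bound $\psi(Y) \ge 0 = (Y^-)^2$ when $Y \ge 0$. Taking expectations and applying the Cauchy--Schwarz (power-mean) inequality,
\[
\EE\,\psi(Y) \ge \EE\big[(Y^-)^2\big] \ge \big(\EE\, Y^-\big)^2 .
\]
Finally, $\EE Y = 0$ gives $\EE\, Y^+ = \EE\, Y^-$ and hence $\EE\, Y^- = \tfrac12 \EE|Y|$, so $\EE\,\psi(Y) \ge \tfrac14 (\EE|Y|)^2$, which is the claim.

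There is no serious obstacle here; the only point that requires care is recognizing that the constraint $\mu \le 1$ is precisely what keeps the downward deviations $\{X < \mu\}$ inside the quadratic regime of $\Phi$, so that the linear regime $\{x > 2\}$ — where $\Phi$ has no strict convexity to exploit — can be discarded entirely by using only $\psi \ge 0$ there while still recovering the full bound $\tfrac14(\EE|X-\mu|)^2$.
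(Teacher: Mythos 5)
Your proof is correct, and it takes a genuinely different (and cleaner) route than the paper's. The paper conditions on the two events $\{X < \mu\}$ and $\{X \ge \mu\}$, sets $\mu_1 = \EE[X \mid X < \mu]$, $\mu_2 = \EE[X \mid X \ge \mu]$, and then splits into two cases according to whether $\mu_2 \le 2$ or $\mu_2 \ge 2$: in the first case everything lives in the quadratic regime and two applications of convexity give the bound (in fact without the factor $\tfrac14$), while in the second case it introduces the same Bregman-type defect $\Phi_\mu(x) = \Phi(x) - 2\mu x + \mu^2$ that you call $\psi$, discards the contribution of the upper part via $\Phi_\mu(\mu_2) \ge \Phi_\mu(\mu_1)$, and recovers the $\tfrac14$ from $p_1 \le 1$. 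You instead work pointwise: the observation that $\mu \le 1$ traps all \emph{downward} deviations in $[0,2]$ gives the exact identity $\psi(Y) = (Y^-)^2$ on $\{Y<0\}$, the upward deviations are discarded wholesale via $\psi \ge 0$, and Cauchy--Schwarz plus $\EE Y^- = \tfrac12 \EE|Y|$ finishes. What your approach buys is the elimination of the case analysis and of the conditional expectations $\mu_1, \mu_2$ altogether; what the paper's buys is a slightly sharper constant in the regime where $X$ never strays above $2$, which is irrelevant for the application. Both proofs hinge on the same structural fact, namely that the hypothesis $\mu \le 1$ confines the mass below the mean to the strictly convex part of $\Phi$.
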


\begin{proof}
	Let $p_1 = \PP(X < \mu)$ and $p_2 = \PP(X \ge \mu)$. Let $\mu_1 =  \EE[X | X < \mu]$  and $\mu_2 = \EE[X | X \ge \mu]$ (setting $\mu_1 = \mu$ if $p_1 = 0$). Note that $\mu=p_1 \mu_1+p_2 \mu_2$. We consider two cases.
	
	\vspace{1mm}
	\noindent
	\textbf{Case 1:} $\mu_2 \le 2$. By the convexity of $\Phi$,
	\begin{align*}		
	\EE \Phi(X) - \Phi(\EE X) &\ge p_1 \Phi(\mu_1) + p_2 \Phi(\mu_2) - \Phi(\mu)  
	= p_1 \mu_1^2 + p_2 \mu_2^2 - \mu^2 
	\\
	&= p_1(\mu - \mu_1)^2 + p_2 (\mu_2 - \mu)^2 
	\ge  (p_1 (\mu - \mu_1) + p_2(\mu_2 - \mu))^2 
	=	(\EE|X - \mu|)^2.
	\end{align*}
	
	\noindent
	\textbf{Case 2:} $\mu_2 \ge 2$. Let $\Phi_\mu (x) := \Phi(x) - 2\mu x + \mu^2$, which is convex. 
	Indeed, $\Phi_\mu$ is a quadratic function on $[0,2]$ with minimum at $\mu$ and is linear on $[2,\infty)$.
	Since $0 \le \mu_1 \le \mu \le 1$ and $\mu_2 \ge 2$, we have 
	$\Phi_\mu(\mu_1) \le \Phi_\mu(0)  = \mu^2 \le 1 \le (2-\mu)^2 = \Phi_\mu(2) \le  \Phi_\mu(\mu_2)$. 
	Thus,
	\begin{align*}
	\EE \Phi(X) - \Phi(\EE X) &= \EE\Phi_\mu(X) \ge p_1 \Phi_\mu (\mu_1) + p_2 \Phi_\mu(\mu_2) \ge \Phi_\mu(\mu_1) 
	= (\mu_1 - \mu)^2 
	\\
	&\ge \frac14 ( 2p_1 (\mu - \mu_1))^2 
	=\frac14 ( p_1 (\mu - \mu_1) + p_2 (\mu_2 - \mu))^2 
	=  \frac{1}{4} (\EE |X - \mu|)^2. \qedhere 
	\end{align*}
\end{proof}

\begin{lemma} \label{lem:defect-global}
	Let $V$ be a probability space and $f \colon V \times V \to [0, \infty)$ be a measurable symmetric function. Let $\cP$ and $\cQ$ be measurable partitions of $V$ such that $\cQ$ refines $\cP$. Then
	\[
		\EE [\Phi \circ f_\cQ] - \EE [\Phi \circ f_\cP]
		\ge \frac14 (\EE [\abs{f_\cQ - f_\cP} 1_{f_\cP \le 1}])^2.
	\]
\end{lemma}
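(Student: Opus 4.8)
The plan is to reduce the global statement to the pointwise defect inequality \cref{lem:defect}, applied separately on each block of the common refinement, and then recombine the blocks using the convexity of $t\mapsto t^2$. First I would set up the localization. Since $\cQ$ refines $\cP$, for each pair of parts $A,B\in\cP$ the product $A\times B$ is subdivided by the grid $\{A'\times B' : A',B'\in\cQ,\ A'\subseteq A,\ B'\subseteq B\}$, on whose cells $f_\cQ$ is constant. Because $f_\cQ$ and $f$ have the same integral over any $\cQ$-measurable set, in particular over $A\times B$, the constant value of $f_\cP$ on $A\times B$ equals the average of $f_\cQ$ over $A\times B$. Equipping $A\times B$ with the (conditional) probability measure, we may therefore regard $X:=f_\cQ|_{A\times B}$ as a nonnegative random variable with mean $\mu:=f_\cP|_{A\times B}$.

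Next I would apply the defect inequality on each block. If $\mu=f_\cP|_{A\times B}\le 1$, then \cref{lem:defect} gives
\[
\EE\big[\Phi\circ f_\cQ\,\big|\,A\times B\big]-\Phi\big(f_\cP|_{A\times B}\big)\ \ge\ \frac14\Big(\EE\big[\abs{f_\cQ-f_\cP}\,\big|\,A\times B\big]\Big)^2.
\]
If instead $f_\cP|_{A\times B}>1$, the left-hand side is still $\ge 0$ by Jensen's inequality (convexity of $\Phi$ together with $f_\cP|_{A\times B}=\EE[f_\cQ\mid A\times B]$), while the right-hand side becomes zero once the cutoff is inserted. Hence in all cases
\[
\EE\big[\Phi\circ f_\cQ-\Phi\circ f_\cP\,\big|\,A\times B\big]\ \ge\ \frac14\Big(\EE\big[\abs{f_\cQ-f_\cP}\,1_{f_\cP\le 1}\,\big|\,A\times B\big]\Big)^2.
\]

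Finally I would average this inequality over the blocks $A\times B$ with weights $\mu(A)\mu(B)$ and invoke the convexity of $t\mapsto t^2$ once more (Jensen, with these weights summing to $1$) to pull the weighted average inside the square, which yields
\[
\EE[\Phi\circ f_\cQ]-\EE[\Phi\circ f_\cP]\ \ge\ \frac14\Big(\EE\big[\abs{f_\cQ-f_\cP}\,1_{f_\cP\le 1}\big]\Big)^2,
\]
as desired. The argument is essentially bookkeeping once \cref{lem:defect} is available; the only points that need a little care are handling the blocks with $f_\cP>1$ (where the cutoff $1_{f_\cP\le1}$ absorbs them) and, if one permits partitions with infinitely many parts, phrasing the per-block step through conditional expectation so that the averaging over blocks is a genuine integral. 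I do not anticipate a serious obstacle.
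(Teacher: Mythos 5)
Your proposal is correct and follows essentially the same route as the paper: apply \cref{lem:defect} blockwise on the pairs $U\times W$ of $\cP$-parts where $f_\cP\le 1$, use convexity of $\Phi$ to discard the blocks where $f_\cP>1$, and recombine with the weights $\mu(U)\mu(W)$ via convexity of $t\mapsto t^2$ (the paper phrases this last step as Cauchy--Schwarz, which is the same estimate).
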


\begin{proof}
	For every pair $(U,W)$ of parts of $\cP$, 	the function $f_{\cP}$ is constant on $U \x W$ with value the average of $f$ on $U \x W$, which is the same as the average of $f_\cQ$ on $U \x W$ since $\cQ$ is a refinement of $\cP$.
	So, by the convexity of $\Phi$, we always have $\EE_{U \times W} [\Phi \circ f_\cQ] \ge  \EE_{U \times W} [\Phi \circ f_\cP]$. 
	Furthermore, by applying \cref{lem:defect} on those $U \times W$ where $f_{\cP} \le 1$, we deduce that
	\[
	\EE_{U \times W} [\Phi \circ f_\cQ] -  \EE_{U \times W} [\Phi \circ f_\cP] \ge \frac14 (\EE_{U\times W} [\abs{f_\cQ - f_\cP} 1_{f_\cP \le 1}])^2
	\] 
	for all $(U, W) \in \cP \times \cP$. Finally, summing the above inequality over all pairs $(U,W)$ and applying the Cauchy--Schwarz inequality, we have
	\begin{align*}
	\EE [\Phi \circ f_\cQ] - \EE [\Phi \circ f_\cP]
	&= \sum_{U,W \in \cP} \mu(U) \mu(W) \paren{ \EE_{U \times W} [\Phi \circ f_\cQ] -  \EE_{U \times W} [\Phi \circ f_\cP]}
	\\
	&\ge \frac14 \sum_{U,W \in \cP} \mu(U) \mu(W) (\EE_{U\times W} [\abs{f_\cQ - f_\cP} 1_{f_\cP \le 1}])^2
	\\
	&\ge \frac14 \paren{\sum_{U,W \in \cP} \mu(U) \mu(W)  \EE_{U\times W} [\abs{f_\cQ - f_\cP} 1_{f_\cP \le 1}]}^2
	\\
	&= \frac14 (\EE [\abs{f_\cQ - f_\cP} 1_{f_\cP \le 1}])^2. \qedhere 
	\end{align*}
\end{proof}

Now we prove the sparse weak regularity lemma. Recall the statement, that, given $\epsilon > 0$ and $f \colon V \times V \to [0,\infty)$, there exists a partition $\cP$ of $V$ into at most $2^{32 \EE f/\epsilon^2}$ parts such that 
\begin{equation} \label{eq:reg-f}
\norm{ (f - f_\cP) 1_{f_\cP \le 1}}_\square \le \epsilon.
\end{equation}

\begin{proof}[Proof of \cref{thm:reg-fn}]
	Starting with the trivial partition $\cP_0$ of $V$ (i.e., consisting of a single part), consider the following iterative process: for each $i \ge 0$, if \cref{eq:reg-f} is satisfied for $\cP = \cP_i$, then we stop the iteration; otherwise, by the definition of the cut norm, there exist measurable subsets $A,B \subset V$ such that $\abs{\EE[(f - f_\cP) 1_{f_\cP \le 1} 1_{A\times B}]} > \epsilon$  and we set $\cP_{i+1}$ to be the common refinement of the partition $\cP_i$ and the four-part partition determined by $A$ and $B$. Write 
	\[
	\Phi(\cP) := \EE[\Phi \circ f_\cP] = \EE_{x,y \in V}[\Phi(f_{\cP})]
	\]
	for the ``energy'' of the partition $\cP$.
	
	With $\cP = \cP_i$, $\cQ = \cP_{i+1}$, and $A,B \subset V$ as above, we have, by \cref{lem:defect-global}, that
	\[
		\Phi(\cQ) - \Phi(\cP)
		\ge \tfrac14 (\EE [\abs{f_\cQ - f_\cP} 1_{f_\cP \le 1}])^2.
	\]
		Since $A$ and $B$ are unions of parts of $\cQ$, we have
	\[
	\EE [\abs{f_\cQ - f_\cP} 1_{f_\cP \le 1}]
	\ge 
	\abs{ \EE [(f_\cQ - f_\cP) 1_{f_\cP \le 1} 1_{A \times B}] }
	= 
	\abs{ \EE [(f - f_\cP) 1_{f_\cP \le 1} 1_{A \times B}] } 
	> \epsilon. 
	\]
	Thus, for every $i \ge 0$, one has
	\[
	 \Phi(\cP_{i+1}) - \Phi(\cP_i) \ge \tfrac14 \epsilon^2.
	\]
	On the other hand, since $0 \le \Phi(x) \le 4x$ for all $x \ge 0$, for every partition $\cP$ of $V$, we have
	\[
	 \Phi(\cP) \le 4 \EE f_{\cP} = 4 \EE f.
	\]
	Thus, the iteration must terminate after at most $16 \EE f / \epsilon^2$ steps, at which point \cref{eq:reg-f} is satisfied. The number of parts increases by a factor of at most 4 in each iteration, so the final number of parts is at most $2^{32 \EE f/\epsilon^2}$.
\end{proof}

\section{Connecting Brown--Erd\H{o}s--S\'os to the extremal problem for girth} \label{sec:pf-girth}

Here we prove \cref{prop:girth}, which says that $f_r(n,(r-1)e,e) = h_r(n,e)$ for sufficiently large $n \ge n_0(r,e)$. Recall that $f_r(n, v, e)$ is the maximum number of edges in an $n$-vertex $r$-graph
without a $(v,e)$-configuration, 
a subgraph with $e$ edges and at most $v$ vertices.\ Moreover, $h_r(n,g)$ is the maximum number of edges in an $n$-vertex $r$-graph of girth greater than $g$.

\begin{lemma}\label{lem:girth-cycle}
An $r$-graph $H$ on $n$ vertices without an $((r-1)e,e)$-configuration and with a cycle of length at most $e$ has at most $\max\{ e-1, n/(r-1) \}$ edges.  
\end{lemma}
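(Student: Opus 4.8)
The plan is to argue by induction on the number of edges, using the cycle to delete structure. Let $H$ be an $r$-graph on $n$ vertices with no $((r-1)e, e)$-configuration, and suppose $H$ contains a (Berge) cycle $C = v_1, e_1, \dots, v_k, e_k$ of length $k \le e$. First I would observe that such a cycle, being a subgraph with $k$ edges $e_1, \dots, e_k$ and at most $(r-1)k$ vertices (since consecutive edges share a vertex $v_i$, each new edge $e_i$ contributes at most $r-1$ new vertices to the cycle's vertex set), is itself a $((r-1)k, k)$-configuration; this is the base of the argument. The key point is that if $H$ had ``too many'' edges, one could greedily enlarge this cycle into an $((r-1)e, e)$-configuration, contradicting the hypothesis.

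The main step is the following greedy extension claim: starting from the cycle $C$, which has $k \le e$ edges on at most $(r-1)k$ vertices, repeatedly add edges one at a time. As long as the current configuration $C'$ has $e' < e$ edges and at most $(r-1)e' $ vertices, if there exists \emph{any} edge of $H$ not already in $C'$ that meets $V(C')$, we add it: the new configuration has $e'+1$ edges and at most $(r-1)e' + (r-1) = (r-1)(e'+1)$ vertices, so the invariant is preserved. Carrying this out until we reach $e$ edges produces an $((r-1)e,e)$-configuration, a contradiction. Hence the process must get stuck before reaching $e$ edges, which means that at some stage we have a configuration $C'$ with $e' \le e-1$ edges such that \emph{no} edge of $H \setminus C'$ meets $V(C')$. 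In other words, $V(C')$ spans a connected component (in the sense that no edge leaves it) of $H$ that contains at most $e-1$ edges, and the remaining edges of $H$ all lie outside $V(C')$.

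To finish, I would bound the total number of edges of $H$ by splitting into the edges inside $V(C')$ and the edges disjoint from $V(C')$. The former number is at most $e' \le e-1$. For the latter, the edges of $H$ avoiding $V(C')$ live on the remaining $n - |V(C')|$ vertices; but actually a cleaner bound is that \emph{every} edge of $H$ not counted among the first $e-1$ is vertex-disjoint from $V(C')$, and more importantly I want the crude count: since $|V(C')| \ge r$ (the cycle alone has at least $r$ vertices when $k \ge 2$, as $e_1$ has $r$ vertices), the edges outside use at most $n - r$ vertices, but to get the stated $n/(r-1)$ bound I would instead note that if there are more than $n/(r-1)$ edges total, then—because each edge has $r$ vertices—a counting/averaging argument forces enough overlap to run the greedy extension further than $e-1$ steps. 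Concretely, one shows that a configuration that cannot be extended and has $e' \le e-1 < e$ edges but where $H$ still has more than $e-1$ edges must have all the excess edges disjoint from $V(C')$, and then one can restart the whole argument on the induced subhypergraph on $V(H) \setminus V(C')$; iterating, the number of edges is at most $(e-1) \cdot \lceil n/|V(C')| \rceil$, but since we only need the bound $\max\{e-1, n/(r-1)\}$ and $|V(C')| \ge r$, after discarding the (at most $e-1$) edges of the first stuck component the rest contains strictly fewer vertices and we recurse, giving that either all edges fit in one stuck component (at most $e-1$ edges) or, summing the trivial bound ``$\le$ (number of vertices used)/$r$'' type estimate appropriately, at most $n/(r-1)$ edges.

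\textbf{Main obstacle.} The delicate point is the very last bookkeeping step: making the recursion/counting clean so that the two cases genuinely collapse to $\max\{e-1, n/(r-1)\}$ rather than something like $(e-1)\lceil n/r\rceil$. I expect the right way is to separate once and for all: either $H$ has at most $e-1$ edges (and we are done), or $H$ has at least $e$ edges, in which case I claim $H$ must be \emph{linear-like} in a strong sense forced by the no-configuration hypothesis combined with containing a short cycle—specifically, I would show the stuck configuration $C'$ in fact exhausts a whole component, the component has at most $e-1$ edges, and all other components are edgeless on their shared vertices so contribute, via the trivial bound (a component on $m$ vertices with no short structure has few edges), at most $n/(r-1)$ in aggregate; reconciling these two regimes into the single clean maximum is the part that needs care.
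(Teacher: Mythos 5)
Your first step---greedily growing the short cycle and concluding that the connected component containing it has fewer than $e$ edges---is exactly the paper's opening move and is correct (the cycle spans at most $(r-1)\ell$ vertices and each added edge contributes at most $r-1$ new ones, so $e$ edges in that component would give an $((r-1)e,e)$-configuration). The gap is in everything after that. Once the process is stuck, the remaining edges of $H$ lie in components that need not contain any cycle of length at most $e$, so you cannot ``restart the whole argument'' on $V(H)\setminus V(C')$: the lemma's hypothesis is not inherited by that subhypergraph. Nor is there a ``trivial bound'' of the form $(\text{edges}) \le (\text{vertices})/(r-1)$ for an individual component: a connected component $P_i$ with $e_i$ edges and $n_i$ vertices can a priori satisfy $n_i < (r-1)e_i$ without containing an $((r-1)e,e)$-configuration by itself (its density may only be detectable using more than $e$ edges, or only after borrowing slack from elsewhere), so the per-component estimate you want to sum is exactly what needs proving, not a triviality. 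Your intermediate bound $(e-1)\lceil n/|V(C')|\rceil$ is also not bounded by $\max\{e-1,\,n/(r-1)\}$ in general.

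The paper closes the argument with a global count that assembles a forbidden configuration from \emph{several components at once}. Set $p_i=(r-1)e_i-n_i$ for each component $P_i$, where $P_0$ is the component containing the cycle; your first step gives $p_0\ge 0$ and $e_0<e$. Sort the remaining components so that $p_1\ge\cdots\ge p_j$, take the union of $P_0,\dots,P_k$ together with a connected piece $Q$ of $P_{k+1}$ chosen so the total number of edges is exactly $e$. This union spans at most $(r-1)e+1-(p_0+\cdots+p_k)$ vertices (the $+1$ comes from $Q$, which, lacking a guaranteed cycle, may span $(r-1)e'+1$ vertices), so the no-configuration hypothesis forces $p_0+\cdots+p_k\le 0$, and the sorting then yields $\sum_i p_i\le 0$, i.e.\ $|E(H)|\le n/(r-1)$. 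The idea you are missing is that the forbidden configuration need not be connected: the surplus of the cycle-containing component is used to absorb the vertex overhead of the other components, and it is this cross-component bookkeeping that turns the local ``at most $e-1$ edges near the cycle'' statement into the global $n/(r-1)$ bound.
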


\begin{proof}
By assumption, $H$ contains a cycle $C$ of length $\ell \leq e$. 
The vertices of this cycle span at most $(r-1)\ell$ vertices.
Growing the connected component containing $C$ one edge at a time, 
	we see that each additional edge adds at most $r-1$ new vertices. Let $P_0$ be the connected component containing $C$. 
In particular, if $P_0$ has at least $e$ edges, then it would contain $e$ edges spanning at most $(r-1)e$ vertices, 
	contradicting that $H$ has no $((r-1)e,e)$-configuration. Hence, $P_0$ has fewer than $e$ edges.
Let $P_1,\dots,P_j$ denote the remaining connected components of $H$. For $0 \leq i \leq j$, let $n_i$ and $e_i$ denote the number of vertices and edges, respectively, of $P_i$. 
Let $p_i=(r-1)e_i-n_i$ and assume, by reordering if necessary, that $p_1 \geq \cdots \geq p_j$. 
By construction, $p_0 \geq 0$ and $e_0 < e$. 

We may assume that $H$ has at least $e$ edges, as otherwise we are done.
Let $k$ be the largest index such that $e_0+\cdots+e_k<e$ and so $e_0+\cdots+e_k+e_{k+1} \geq e$, as $H$ has at least $e$ edges.
Let $e'=e-(e_0+\cdots+e_k)$. Let $Q$ be a connected subset of $P_{k+1}$ with $e'$ edges formed by starting with a single edge in $P_{k+1}$ and adding edges one at a time, keeping the resulting subset connected, until we get exactly $e'$ edges. By construction, $Q$ has at most $(r-1)e'+1$ vertices. 
Let $U_0$ consist of the union of the connected components $P_0,\ldots,P_k$ together with $Q$, 
	so that $U_0$ has $e$ edges and at most $\left((r-1)e_0-p_0\right)+\cdots +\left((r-1)e_k-p_k\right)+(r-1)(e-(e_0+\cdots+e_k))+1=(r-1)e+1-p_0-\cdots-p_k$ vertices.
As $H$ has no $((r-1)e,e)$-configuration, we must have $p_0+\cdots+p_k \leq 0$. 
As $p_1 \geq \cdots \geq p_j$, it follows that $p_0+p_1+\cdots+p_j \leq 0$. Equivalently, the number of edges in $H$ is at most $n/(r-1)$. 
\end{proof}

\begin{lemma} \label{lem:theta}
	For $r \geq 2$ and $g \geq 2$, there exists $n_0(r, g)$ such that $h_r(n,g) > n/(r-1)$ for all $n \geq n_0(r, g)$.
\end{lemma}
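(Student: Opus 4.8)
The plan is to construct, for all sufficiently large $n$, an $r$-graph on $n$ vertices with girth greater than $g$ and strictly more than $n/(r-1)$ edges. The most natural source of such configurations is a \emph{Steiner-type} or \emph{generalized quadrangle-type} construction, but for this qualitative statement it suffices to use a probabilistic deletion argument. First I would take a random $r$-graph $H_0$ on $n$ vertices where each $r$-set is included independently with probability $p = c n^{1-r}$ for a small constant $c = c(r,g) > 0$. Then the expected number of edges is $p\binom{n}{r} = \Theta(n)$, and I would choose $c$ large enough (but still a constant) that this expectation comfortably exceeds $2n/(r-1)$, say.

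Next I would bound the expected number of short cycles, i.e., Berge cycles of length $\ell$ for each $2 \le \ell \le g$. A Berge cycle of length $\ell$ uses $\ell$ distinct edges and at least $\ell$ (and at most $(r-1)\ell$) vertices; the number of ways to choose the vertex set and the $\ell$ edges with the required incidences is $O(n^{(r-1)\ell})$ (with the implied constant depending on $r$, $\ell$), so the expected number of Berge $\ell$-cycles in $H_0$ is $O(n^{(r-1)\ell} p^\ell) = O(c^\ell)$, a constant depending only on $r$ and $g$. Summing over $2 \le \ell \le g$, the expected total number of short cycles is some constant $C(r,g)$. By Markov, with probability at least $1/2$ there are at most $2C(r,g)$ short cycles, and simultaneously (by a concentration or second-moment / Markov-from-below estimate, or simply by noting the expected edge count is large and using Markov on the complement event) we may assume at least $n/(r-1) + 2C(r,g) + 1$ edges remain. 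Deleting one edge from each short cycle destroys all cycles of length $\le g$ and removes at most $2C(r,g)$ edges, leaving an $r$-graph with girth greater than $g$ and more than $n/(r-1)$ edges, as desired.

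The main obstacle — really the only point requiring care — is the simultaneous control of the edge count from below and the short-cycle count from above. Markov's inequality on the number of short cycles handles the latter, but for the former one should either invoke Chebyshev (the variance of the edge count is $p(1-p)\binom{n}{r} = \Theta(n)$, which is $o$ of the squared mean) or, more cheaply, apply Markov to the quantity (mean edge count) minus (actual edge count), which is nonnegative in expectation only after noting the edge count is a sum of indicators bounded by its mean in the relevant direction; the cleanest route is Chebyshev to conclude the edge count is $(1+o(1))p\binom nr$ with probability $1-o(1)$. Combining the two events by a union bound gives a graph with both properties, and then the one-edge-per-short-cycle deletion finishes the argument. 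One should also double-check the degenerate small cases: a Berge $2$-cycle is a pair of edges sharing two vertices, and the counting bound $O(n^{(r-1)\ell}p^\ell)$ still applies with $\ell = 2$ since $r \ge 2$ forces $(r-1)\cdot 2 \ge 2$ available vertex slots, so no case is lost.
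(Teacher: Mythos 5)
Your argument is correct, but it takes a genuinely different route from the paper. The paper's proof is a short explicit construction: fix two vertices $u$ and $v$ and pack as many edge-disjoint ``paths'' of $\lfloor g/2\rfloor+1$ edges between them as the vertex budget allows (consecutive edges sharing exactly one vertex). Any Berge cycle in this theta-shaped hypergraph must use two of the paths and hence has length at least $2(\lfloor g/2\rfloor+1)>g$, while each path contributes $\lfloor g/2\rfloor+1$ edges at a cost of only $(\lfloor g/2\rfloor+1)(r-1)-1$ new vertices, so the edge count exceeds $n/(r-1)$ once $n$ is large. You instead run the classical Erd\H{o}s alteration method: take a random $r$-graph with $p=cn^{1-r}$, note that the expected number of Berge cycles of each length $\ell\le g$ is $O(n^{\ell(r-1)}p^{\ell})=O(c^{\ell})=O(1)$, combine Markov on the cycle count with Chebyshev on the edge count, and delete one edge per short cycle. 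Your counting of configurations ($n^{\ell}$ for the core vertices times $n^{r-2}$ per edge for the remaining slots) and your handling of the quantifier order (fix $c$ first; the resulting $O(1)$ cycle bound is then absorbed for large $n$) are both sound, and the $\ell=2$ case is handled correctly. The trade-off: the paper's construction is deterministic, elementary, and yields an explicit lower bound on $h_r(n,g)$, whereas your probabilistic argument is heavier machinery for this purely linear-in-$n$ target but generalizes immediately to superlinear bounds (e.g.\ taking $p$ closer to $n^{(1/g)-r+1}$ in the spirit of Erd\H{o}s's girth--chromatic-number argument), which the theta construction cannot give.
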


\begin{proof}
	Construct an $r$-graph by fixing two vertices $u$ and $v$ and adding edge-disjoint ``paths'' between $u$ and $v$, where each ``path'' consists of $\lfloor g/2 \rfloor + 1$ edges, the first containing $u$, the last containing $v$, and where consecutive edges share exactly one vertex. Add as many paths as one can without exceeding $n$ total vertices. Each new path uses $(\lfloor g/2 \rfloor + 1)(r-1) - 1$ new vertices (not counting $u$ and $v$). The resulting $r$-graph has girth greater than $g$ and 
	\[
	h_r(n,g) \ge \left\lfloor \frac{n-2}{(\lfloor g/2 \rfloor + 1)(r-1) - 1} \right\rfloor (\lfloor g/2 \rfloor + 1) 
	\]
	edges, which exceeds $n/(r-1)$ for sufficiently large $n$.
\end{proof}

\begin{proof}[Proof of \cref{prop:girth}]
If an $r$-graph has girth greater than $e$,
	then every subset of $e$ edges contains no cycle and hence spans more than $(r-1)e$ vertices.
	Thus, the $r$-graph has no $((r-1)e,e)$-configuration and $h_r(n,e) \leq f_r(n,(r-1)e,e)$ for all $n$.

Conversely, by the previous paragraph and Lemma \ref{lem:theta}, we have that for sufficiently large $n$ the largest $n$-vertex $r$-graph with no $((r-1)e,e)$-configuration 
has more than $n/(r-1)$ edges. Therefore, by Lemma \ref{lem:girth-cycle}, it has no cycle of length at most $e$. Thus, $f_r(n,(r-1)e,e) \le h_r(n,e)$.
\end{proof}

\section{Counting 3-graphs with girth greater than 5} \label{sec:pf-count-girth>5}

Here we prove \cref{thm:count-girth>5}, 
which says that 
	for every fixed $r \ge 3$, 
	the number of $r$-graphs on $n$ vertices with girth greater than $5$ 
	is $2^{o(n^{3/2})}$.

\begin{proof}[Proof of \cref{thm:count-girth>5}]
Let $F_r(n)$ denote the number of $r$-graphs on $n$ labeled vertices with girth greater than $5$. 

First we show that, for every $r \ge 3$, 
	one has $F_r(n) \le F_3(n)^{\binom{r}{3}}$, 
	thereby reducing the problem to $r=3$. 
Indeed, given an $r$-graph $H$, color the triples contained in each edge of $H$ arbitrarily from $1$ to $\binom{r}{3}$, using one color for each triple.
The color classes give a list $H_1, \dots, H_{\binom{r}{3}}$ of 3-graphs, each with girth greater than $5$, and thus there are at most $F_3(n)^{\binom{r}{3}}$ possibilities for such a list.
Furthermore, one can recover $H$ from the list $H_1, \dots, H_{\binom{r}{3}}$ since two triples in $H_1 \cup \cdots \cup H_{\binom{r}{3}}$ share two vertices if and only if they are contained in the same edge in $H$ (recall that $H$ has no 2-cycles). Thus, there are at most $F_3(n)^{\binom{r}{3}}$ possibilities for $H$.

Now it remains to show that $F_3(n) = 2^{o(n^{3/2})}$.
Let $H$ be a 3-graph with girth greater than 5.
By \cref{cor:girth>5}, $H$ has $o(n^{3/2})$ edges.
Let $G$ be the underlying shadow graph
	(a pair of vertices form an edge of $G$ if they are contained in a triple of $H$).
Since $H$ has girth greater than $5$, 
	every edge in $G$ lies in a unique triangle, 
	and $G$ is $C_4$-free with $o(n^{3/2})$ edges.
Then, by \cref{prop:count-c4-free-o} below, there are $2^{o(n^{3/2})}$ possibilities for $G$. The 3-graph $H$ can be recovered uniquely from $G$ and thus the number of such $H$ is also $2^{o(n^{3/2})}$.
\end{proof}

\begin{proposition} \label{prop:count-c4-free-o}
The number of $C_4$-free graphs on $n$ vertices with $o(n^{3/2})$ edges is $2^{o(n^{3/2})}$.	
\end{proposition}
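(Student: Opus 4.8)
The plan is to reduce the proposition to the Kleitman--Winston counting bound for $C_4$-free graphs with a prescribed number of edges, after which only an elementary computation remains. The input I will use is: there is an absolute constant $C>0$ such that, for every $n$ and every $m$ with $m_0\le m\le\binom n2$, the number of labeled $C_4$-free graphs on $[n]$ with exactly $m$ edges is at most $\bigl(Cn^{3/2}/m\bigr)^{m}$, where $m_0=m_0(n)$ is a fixed threshold whose precise value is immaterial; I will take $m_0=n^{4/3}$, noting only that $m_0\log n=o(n^{3/2})$. This bound is the content of the Kleitman--Winston argument \cite{KW82} (which, in essence, confines every $C_4$-free graph to one of comparatively few ``containers'' of size $O(n^{3/2})$, reflecting the K\H{o}v\'ari--S\'os--Tur\'an estimate $\ex(n,C_4)=\Theta(n^{3/2})$); it can also be obtained from the general hypergraph container method \cite{BMS15,ST15}. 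For $m<m_0$ I will simply use the trivial bound $\binom{\binom n2}{m}\le n^{2m}$.

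Granting this, fix an arbitrary $m=m(n)=o(n^{3/2})$ and set $t=t(n):=n^{3/2}/m$, so that $t\to\infty$. I estimate the number of $C_4$-free graphs on $[n]$ with at most $m$ edges by a union bound over the exact number $m'$ of edges; since there are at most $n^2$ possible values of $m'$, it suffices to bound each term uniformly by $2^{o(n^{3/2})}$. For $m'\le m_0$ the term is at most $n^{2m'}\le n^{2m_0}=2^{O(m_0\log n)}=2^{o(n^{3/2})}$. For $m_0<m'\le m$, write $t'=n^{3/2}/m'\ge t$; then
\[
\log_2\bigl(Cn^{3/2}/m'\bigr)^{m'}=m'\log_2(Ct')=n^{3/2}\cdot\frac{\log_2(Ct')}{t'}\le n^{3/2}\cdot\sup_{x\ge t}\frac{\log_2(Cx)}{x},
\]
and the supremum tends to $0$ as $n\to\infty$ because $x\mapsto\log_2(Cx)/x$ is decreasing for large $x$ and vanishes at infinity; hence this term is $2^{o(n^{3/2})}$ as well. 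Combining the two cases proves the proposition.

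The only nontrivial ingredient is the displayed Kleitman--Winston bound. A naive union bound over all $m$-edge graphs on $[n]$ gives only $\binom{\binom n2}{m}=2^{O(m\log n)}$, which is $2^{o(n^{3/2})}$ merely when $m=o(n^{3/2}/\log n)$, whereas we must treat every $m=o(n^{3/2})$; so the savings encoded by containers of size $O(n^{3/2})$ are genuinely needed, and everything after invoking them rests only on the fact that $(\log t)/t\to 0$.
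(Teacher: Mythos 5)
Your reduction is correct as written, but it takes a genuinely different route from the paper and leans on a stronger black box than the paper is willing to assume. The paper's proof is self-contained modulo the independent-set container lemma (\cref{lem:indep}): it orders the vertices in reverse degeneracy order, uses \cref{lem:c4-free-min-deg} to show that $o(n^{3/2})$ edges forces every induced subgraph to have minimum degree $O(m^{1/3})=o(\sqrt n)$, and then uses \cref{lem:gnd} to show that each vertex attachment contributes only $e^{o(\sqrt n)}$ choices (improving on the $e^{O(\sqrt n)}$ of the unrestricted Kleitman--Winston count), so the product over $n$ attachments is $2^{o(n^{3/2})}$. You instead quote the refined enumeration ``at most $(Cn^{3/2}/m)^m$ $C_4$-free graphs with exactly $m$ edges'' and reduce everything to the fact that $(\log t)/t\to 0$; granting that input, your case split at $m_0$, the union bound over the $\le n^2$ values of $m'$, and the estimate of $\sup_{x\ge t}\log_2(Cx)/x$ are all correct, and your argument is even robust to weakening the input to $(Cn^{3/2}/m)^{O(m)}$. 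The one point needing care is provenance: the displayed bound is \emph{not} the statement of \cref{thm:kw}, and it is not literally ``the content of'' \cite{KW82}, whose theorem is the unrefined count $2^{O(n^{3/2})}$; deriving it from the general container theorems of \cite{BMS15,ST15} requires a balanced supersaturation statement for $C_4$ that is itself a nontrivial piece of work. The edge-counted bound you invoke is a genuine theorem in the literature (essentially the $\ell=2$ case of Morris and Saxton's enumeration of $C_{2\ell}$-free graphs with a given number of edges, valid above a polynomial threshold compatible with your $m_0$), so with that citation your proof is complete and shorter than the paper's; but if you are required to stay within the toolkit actually developed here, you would have to prove that refined bound yourself, and the natural way to do so is precisely the vertex-by-vertex argument of \cref{lem:gnd} and \cref{lem:c4-free-min-deg}.
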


\cref{prop:count-c4-free-o} can be proved by modifying the proof of the following classic result of Kleitman and Winston~\cite{KW82}.

\begin{theorem}[Kleitman--Winston] \label{thm:kw}
The number of $C_4$-free graphs on $n$ vertices is $2^{O(n^{3/2})}$. 
\end{theorem}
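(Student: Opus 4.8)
The plan is to reconstruct the classical encoding argument of Kleitman and Winston, isolating the one delicate step. First I would reduce to counting graphs with a fixed number of edges: by the K\H{o}v\'ari--S\'os--Tur\'an (Reiman) bound, every $C_4$-free graph on $[n]$ has at most $m_0 = O(n^{3/2})$ edges, and since the number of edges ranges over at most $n^2$ values while $n^2 = 2^{o(n^{3/2})}$, it suffices to show that for each $m \le m_0$ the number $\mathcal N_m$ of labelled $C_4$-free graphs on $[n]$ with exactly $m$ edges is $2^{O(n^{3/2})}$.

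To bound $\mathcal N_m$ I would build an injective encoding of each such $G$ by a short bit string. Run the following deterministic peeling on $G$: set $A := [n]$; while $A \neq \emptyset$, let $v$ be the vertex of $A$ of maximum degree into $A$ (ties broken by label), record the pair $(v,\, N_G(v)\cap A)$, and delete $v$ from $A$. This produces an ordering $v_1,\dots,v_n$ of $[n]$ together with ``forward neighbourhoods'' $F_i := N_G(v_i)\cap A_i \subseteq \{v_{i+1},\dots,v_n\}$, where $A_i$ is the active set at step $i$. Three facts make this useful: $G$ is recovered from the list $(v_i,F_i)_{i=1}^n$ via $E(G)=\bigcup_i\{v_iw : w\in F_i\}$; one has $\sum_i |F_i| = m$; and, since deleting a vertex never raises a degree, $|F_1|\ge |F_2|\ge\cdots$. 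It remains to encode the list $(v_i,F_i)_i$ using $O(n^{3/2})$ bits.

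The crux is to encode the sets $F_i$ without a $\log n$ loss: recording the labels $v_i$ alone is cheap, costing $\sum_i \log|A_i|\le n\log n = o(n^{3/2})$ bits, but recording each $F_i$ naively as a subset of $A_i$ costs $\sum_i \log\binom{|A_i|}{|F_i|}=O(n^{3/2}\log n)$, which is too much. Here $C_4$-freeness must be used in an essential way: the device of Kleitman and Winston is to reveal the elements of each $F_i$ one at a time against a pool of candidates that is repeatedly pruned using the edges already committed, and $C_4$-freeness --- concretely, that any two neighbours of the maximum-degree vertex $v_i$ share only $v_i$, so that the cherry count $\sum_v\binom{\deg_G(v)}{2}$ exhausts at most $\binom{n}{2}$ pairs --- forces this pool to contract fast enough that the total number of genuine binary decisions over all rounds is $O\!\left(\sum_i |F_i|\right) = O(m) = O(n^{3/2})$. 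Implementing this bookkeeping carefully gives $\mathcal N_m \le 2^{O(n^{3/2})}$ and hence the theorem. (Alternatively one may deduce the theorem from the hypergraph container lemma applied to the $4$-uniform hypergraph of $C_4$'s on vertex set $\binom{[n]}{2}$, using a balanced supersaturation estimate for $C_4$'s to keep the number of containers at $2^{O(n^{3/2})}$ and noting that every container, having $o(n^4)$ copies of $C_4$, has $O(n^{3/2})$ edges, since a graph with more edges would contain $\Omega(n^4)$ copies of $C_4$.)

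I expect the log-free encoding of the previous paragraph to be the main obstacle; it is also precisely the step that needs a quantitative strengthening for \cref{prop:count-c4-free-o}, where the same scheme should give $2^{O(m)} = 2^{o(n^{3/2})}$ once one checks that every term in the count scales linearly with the now sub-extremal number of edges $m$ (the label cost $n\log n$ and the cost of marking isolated vertices being $o(n^{3/2})$ in any case).
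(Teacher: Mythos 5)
Your high-level strategy --- process the vertices one at a time and encode each neighbourhood cheaply via the Kleitman--Winston independent-set selection --- is the right one, and it is essentially the route the paper takes (following Samotij's exposition, via \cref{lem:indep}). But there is a genuine gap at exactly the step you flag as the crux, and the way you have set up the peeling makes that step fail. The KW device bounds the number of $d$-element independent sets in an auxiliary graph $H_i$ on the candidate pool (two candidates adjacent when they already share a common neighbour, so that choosing both would create a $C_4$); for the pool to contract to size $O(n/d)$ one needs the hypothesis of \cref{lem:indep}, namely that every large subset of the pool induces \emph{many} edges of $H_i$. That supersaturation comes from a \emph{lower} bound on the degrees of the already-committed graph: in \cref{lem:gnd} it is precisely the minimum-degree condition $\deg \ge d-1$, supplied by the min-degree (degeneracy) peeling order, that yields $\sum_z \binom{\deg_G(z,B)}{2} \ge \beta\binom{\abs{B}}{2}$ with $\beta \asymp d^2/n$. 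Your peeling removes the vertex of \emph{maximum} degree first, so when $F_i$ is revealed the committed graph is only known to have degrees bounded \emph{above}; at the early steps the auxiliary graph can be empty and nothing forces the pool to shrink, so each element of $F_i$ may cost a full $\log n$ bits. ``Implementing the bookkeeping carefully'' is therefore not routine with your ordering: you should reverse it to the degeneracy order, at which point the missing implementation is exactly \cref{lem:indep} together with the parameter choices $R \asymp n/d$, $\beta \asymp d^2/n$, $q \asymp \mathrm{polylog}(n)$.

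Two quantitative claims are also false as stated. First, the total encoding cost cannot be $O\big(\sum_i \abs{F_i}\big) = O(m)$ bits: the subgraphs of a single extremal $C_4$-free graph already give at least $\binom{\Theta(n^{3/2})}{m} = 2^{\Theta(m\log(n^{3/2}/m))}$ labelled $C_4$-free graphs with $m$ edges, so $\mathcal N_m$ is not $2^{O(m)}$ when $m \ll n^{3/2}$; in particular your final paragraph's plan to get $2^{O(m)} = 2^{o(n^{3/2})}$ for \cref{prop:count-c4-free-o} cannot work, and the correct target there is $e^{o(\sqrt n)}$ per attached vertex (hence $2^{o(n^{3/2})}$ in total), which is what \cref{lem:gnd} delivers. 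Second, in the container parenthetical, a graph with $Kn^{3/2}$ edges is only guaranteed $\Omega(K^4 n^2)$ copies of $C_4$ (two applications of convexity), not $\Omega(n^4)$, so ``$o(n^4)$ copies of $C_4$ implies $O(n^{3/2})$ edges'' is wrong; making the container route work requires balanced supersaturation and an iterated application of the container lemma, a substantial additional input rather than a remark.
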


We follow the exposition of Samotij~\cite[Theorem 8]{Sam15} in his survey on counting independent sets in graphs via graph containers. We begin with the following key lemma.

\begin{lemma}[{\cite[Lemma 1]{Sam15}}] \label{lem:indep}
	Let $G$ be an $n$-vertex graph. Suppose that an integer $q$ and reals $R$ and $\beta \in [0,1]$ satisfy $R \ge e^{-\beta q} n$. Suppose that every subset $U \subset V(G)$ with $\abs{U} \ge R$ induces at least $\beta\binom{\abs{U}}{2}$ edges in $G$. Then, for every integer $m \ge q$, the number of $m$-element independent sets in $G$ is at most $\binom{n}{q}\binom{R}{m-q}$.
\end{lemma}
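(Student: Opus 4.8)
The plan is to prove \cref{lem:indep} via the Kleitman--Winston greedy \emph{container algorithm}, essentially as in Samotij's exposition~\cite{Sam15}. Fix once and for all an arbitrary linear order on $V(G)$. I will describe a deterministic procedure that, given an independent set $I$ with $\abs I \ge q$, outputs a pair $(S,A)$ with $S \subseteq I$, $\abs S = q$, $I \subseteq S \cup A$, $\abs A \le R$, and --- this is the crucial point --- with $A$ depending only on $S$, not otherwise on $I$. Granting such a map $I \mapsto (S,A)$, every independent $m$-set $I$ is recovered from the data of its $q$-element ``fingerprint'' $S$ (at most $\binom nq$ choices) together with the $(m-q)$-element set $I \setminus S$, which is contained in the container $A = A(S)$ of size at most $R$ (at most $\binom R{m-q}$ choices); this yields the bound $\binom nq \binom R{m-q}$.

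The algorithm maintains an ``available set'' $A$, initialised to $V(G)$, and a fingerprint $S$, initialised to $\emptyset$. While $\abs S < q$: let $u$ be the vertex of maximum degree in the induced subgraph $G[A]$, breaking ties by the fixed order; if $u \in I$, move $u$ into $S$ and delete from $A$ both $u$ and all of its neighbours in $G[A]$; if $u \notin I$, simply delete $u$ from $A$. Since $I$ is independent, after each step the available set still contains all of $I \setminus S$; in particular $A \ne \emptyset$ whenever $\abs S < q$ (as $\abs{I \setminus S} = \abs I - \abs S \ge 1$), so the step is well defined, and a routine potential argument --- each step strictly decreases the nonnegative integer $\abs A + (q - \abs S)$ --- shows the process halts, necessarily with $\abs S = q$. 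At termination $I \subseteq S \cup A$, because every vertex ever removed from $A$ either was moved into $S \subseteq I$ or was a vertex outside $I$ (a non-selected $u$, or a neighbour of a selected vertex, which cannot lie in the independent set $I$).

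Two claims then remain to be checked. For the container bound $\abs A \le R$: consider any moment at which a vertex $u$ is selected into $S$, and let $A'$ be the available set just before this selection. If $\abs{A'} \ge R$, the hypothesis gives $e(G[A']) \ge \beta \binom{\abs{A'}}2$, so the maximum degree in $G[A']$ is at least the average degree $\beta(\abs{A'}-1)$; hence the selection, which deletes $u$ together with its $\ge \beta(\abs{A'}-1)$ neighbours, leaves an available set of size at most $\abs{A'} - \beta(\abs{A'}-1) - 1 \le (1-\beta)\abs{A'} \le e^{-\beta}\abs{A'}$. Since the available set never grows, either some selection sees $\abs{A'} < R$ --- and then the final $A$ has size $< R$ --- or every one of the $q$ selections shrinks the available set by a factor $e^{-\beta}$, so the final $A$ has size at most $e^{-\beta q} n \le R$ by hypothesis. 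For the dependence of $A$ on $S$ alone: re-run the identical greedy procedure, but with the test ``$u \in I$?'' replaced by ``$u \in S$?''. Because each vertex is queried at most once (it is deleted from $A$ immediately after being queried), and because a queried vertex belongs to $S$ precisely when it belonged to $I$ (elements are added to $S$ exactly when found in $I$, and $S \subseteq I$), this re-run makes the same branching decisions and produces the same sequence of available sets; thus the terminal $A$ is a function of $S$, as needed.

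There is no deep obstacle here; the argument is bookkeeping. The one point requiring care is the last claim --- that the container is reconstructible from the fingerprint without any reference to $I$ --- which is what makes this a \emph{container} lemma rather than a trivial counting bound, and it rests entirely on the observation that no vertex is examined by the algorithm more than once.
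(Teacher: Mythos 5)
Your proof is correct and is precisely the standard Kleitman--Winston container argument that the paper defers to by citing \cite[Lemma 1]{Sam15}: a max-degree greedy algorithm producing a $q$-element fingerprint and a container of size at most $R$ reconstructible from the fingerprint alone. All the key points (the single-query observation guaranteeing reconstructibility, the $e^{-\beta}$ shrinkage per selection, and the well-definedness of each step) are handled properly, so nothing further is needed.
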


As in \cite{Sam15}, let $g_n(d)$ denote the maximum number of ways to attach a vertex of degree $d$ to an $n$-vertex $C_4$-free graph with minimum degree at least $d-1$ in such a way that the resulting graph remains $C_4$-free. In \cite{Sam15}, it was proved that $\max_{d \le n} g_n(d) \le  e^{O(\sqrt{n})}$. We modify the proof of this statement to obtain the following bound.

\begin{lemma} \label{lem:gnd}
	If $i \le n$ and $d = o(\sqrt{n})$, then $g_i(d) = e^{o(\sqrt{n})}$.
\end{lemma}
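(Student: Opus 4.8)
The plan is to follow Samotij's proof of $\max_{d\le n}g_n(d)\le e^{O(\sqrt n)}$, which runs through the container-type bound \cref{lem:indep}, but to keep careful track of the dependence on $d$ and to dispatch small $d$ by a crude bound. Fix a $C_4$-free graph $H$ on a vertex set $V$ with $\abs V=i$ and minimum degree at least $d-1$, and let $G'$ be the graph on $V$ in which two vertices are adjacent precisely when they have a common neighbour in $H$. First I would observe that attaching a new vertex $v$ with $N(v)=S$ keeps the graph $C_4$-free if and only if $S$ is an independent set of $G'$: any new $C_4$ must pass through $v$, hence has the form $v,a,w,b$ with $a,b\in S$ distinct and $w$ a common $H$-neighbour of $a$ and $b$. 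Thus $g_i(d)$ is the maximum over admissible $H$ of the number of independent $d$-sets of $G'$, and in particular $g_i(d)\le\binom id\le i^d$. Moreover, $C_4$-freeness of $H$ means any two vertices have at most one common neighbour, which gives the identity $e(G'[U])=\sum_{w\in V}\binom{\abs{N_H(w)\cap U}}{2}$ for every $U\subseteq V$; combining this with $\sum_w\abs{N_H(w)\cap U}=\sum_{u\in U}\deg_H(u)\ge(d-1)\abs U$ and convexity of $\binom x2$, one gets $e(G'[U])\ge(d-1)^2\abs U^2/(4i)\ge\beta\binom{\abs U}{2}$ with $\beta:=(d-1)^2/(2i)$, valid whenever $\abs U\ge R:=2i/(d-1)$ (the threshold making $(d-1)\abs U/i\ge2$). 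Since $G'$ is simple, $\tfrac12 i(d-1)(d-2)\le e(G')\le\binom i2$, so $i>(d-1)(d-2)$ and hence $\beta<1$ for $d\ge3$ (for bounded $d$ nothing below is needed).

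I would then split on the size of $d$. If $d\le n^{2/5}$, then $g_i(d)\le i^d\le n^d\le e^{n^{2/5}\ln n}$, and $n^{2/5}\ln n=o(\sqrt n)$, so we are done. If $d>n^{2/5}$, I would apply \cref{lem:indep} to $G'$ with the above $\beta$ and $R$, taking $q:=\ceil{\tfrac{2i}{(d-1)^2}\ln\tfrac{d-1}{2}}$ (chosen exactly so that $\beta q\ge\ln\tfrac{d-1}{2}$, i.e.\ $R\ge e^{-\beta q}i$) and $m:=d$; in this range $q\le\tfrac{2n}{(d-1)^2}\ln n+1=O(n^{1/5}\ln n)\le d/2\le d$ for large $n$, so the hypotheses hold, and the lemma yields $g_i(d)\le\binom iq\binom R{d-q}$. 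For the first factor, $\ln\binom iq\le q\ln(ei)=O(n^{1/5}(\ln n)^2)=o(\sqrt n)$. For the second, using $d-q\ge d/2$ and $R\le2n/(d-1)$ (so that $R/(d-q)\le8n/d^2$), $\ln\binom R{d-q}\le(d-q)\ln\tfrac{eR}{d-q}\le d\ln\tfrac{8en}{d^2}=d\ln(8e)+2d\ln\tfrac{\sqrt n}{d}$; here $d\ln(8e)=o(\sqrt n)$ since $d=o(\sqrt n)$, and, writing $t:=\sqrt n/d\to\infty$, $2d\ln\tfrac{\sqrt n}{d}=2\sqrt n\cdot\tfrac{\ln t}{t}=o(\sqrt n)$. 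Summing, $\ln g_i(d)=o(\sqrt n)$, as required.

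The only genuinely delicate point is the choice of the cutoff $R$ in \cref{lem:indep}: the ``every large set spans many edges'' property of $G'$ only takes effect once $\abs U\gtrsim i/d$, which forces $R\gtrsim i/d$, so that $\binom R{d-q}$ is of order $\binom{n/d}{d}\approx e^{\Theta(d\log(n/d^2))}$; the crux is that $d\log(n/d^2)=2\sqrt n\cdot\frac{\log(\sqrt n/d)}{\sqrt n/d}$ is $o(\sqrt n)$ exactly because $d=o(\sqrt n)$. One must also note that in the complementary regime of small $d$ — where $q$ would exceed $d$ and \cref{lem:indep} no longer applies — the trivial bound $\binom id\le i^d$ is already good enough, so the precise location of the threshold between the two cases is immaterial. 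Everything else is the standard container computation from \cite{Sam15}.
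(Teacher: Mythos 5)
Your proof is correct and follows essentially the same route as the paper's: reduce to counting independent $d$-sets in the square graph, handle small $d$ by the trivial bound $\binom{i}{d}\le n^d$, and for large $d$ apply \cref{lem:indep} with $\beta\asymp d^2/n$, $R\asymp n/d$, and a small $q$, concluding via the observation that $d\log(n/d^2)=o(\sqrt n)$ precisely because $d=o(\sqrt n)$. The only differences (the threshold $n^{2/5}$ versus $\sqrt n/(\log n)^2$, and the exact choice of $q$) are cosmetic.
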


\begin{proof}
	If $d \le \sqrt{n}/(\log n)^2$, then $g_i(d) \le \binom{i}{d} \le n^d = e^{o(\sqrt{n})}$. So assume $d > \sqrt{n}/(\log n)^2$.
	
	Let $G$ be an $i$-vertex $C_4$-free graph with minimum degree at least $d-1$. Let $H$ be the square of $G$, i.e., $V(H) = V(G)$
	and two vertices are adjacent in $H$ 
	if and only if they are connected by a path of two edges in $G$. 
	Note that attaching a new vertex to $G$ will not create any 4-cycles if and only if the neighborhood of the new vertex is an independent set in $H$. It remains to upper bound the number of $d$-element independent sets in $H$ using \cref{lem:indep}.
	
	Let $R = 2n/(d-1)$, $\beta = (d-1)^2/2n$, and $q = \lceil 3 (\log n)^5 \rceil$. 
	Since $d > \sqrt{n}/(\log n)^2$, for $n$ sufficiently large we have $\beta q \ge \log n$ and thus $e^{-\beta q} i \le e^{-\beta q} n \le 1 \le R$.
	
	Since $G$ has minimum degree at least $d-1$, every $B \subseteq V(H)$ satisfies 
		$\sum_{z \in V(G)} \deg_G(z,B) \ge (d-1)\abs{B}$. 
	Thus, if $\abs{B} \ge R = 2n/(d-1)$, then the number of edges that $B$ induces in $H$ is equal to
	\begin{align*}
		\sum_{z \in V(G)} \binom{\deg_G(z,B)}{2} 
		&\ge i \binom{\sum_{z \in V(G)} \deg_G(z,B)/i}{2} 
		\\
		&\ge i \cdot \frac{(d-1)\abs{B}}{2i}\paren{\frac{(d-1)\abs{B}}{i} - 1}
		\ge \frac{(d-1)^2}{2n}\binom{\abs{B}}{2} = \beta \binom{\abs{B}}{2},
	\end{align*}
	where the first inequality uses the convexity of $x \mapsto \binom{x}{2} = \frac{x(x-1)}{2}$. 
	
	Applying \cref{lem:indep}, the number of $d$-element independent sets in $H$ is at most 
	\[
	\binom{i}{q}\binom{R}{d-q}
	\le n^q \paren{\frac{eR}{d-q}}^{d-q}
	\le e^{O((\log n)^6)} \paren{\frac{ 2ne}{(d-q)^2}}^{d-q}.
	\]
	Applying $\lim_{x \to 0} \sqrt{x} \log  (1/x)= 0$ with $x = (d-q)^2/(2ne) = o(1)$, we see that the right-hand side above is $e^{o(\sqrt{n})}$. Thus, the number of $d$-element independent sets in $H$ is $e^{o(\sqrt{n})}$.
\end{proof}

\begin{lemma}\label{lem:c4-free-min-deg}
	A $C_4$-free graph with minimum degree $d$ must have more than $d^3/2-d^2/2$ edges.
\end{lemma}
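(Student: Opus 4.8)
The plan is to fix an arbitrary vertex $v$, use $C_4$-freeness to produce a large set $T$ of vertices all lying at distance at most $2$ from $v$, and then bound $e(G)$ from below by counting edges incident to $T$ via the degree sum over $T$.

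First I would take any vertex $v$ of $G$. Since every vertex has degree at least $d$, we have $|N(v)|\ge d$, and for each $u\in N(v)$ the set $D_u:=N(u)\setminus\{v\}$ satisfies $|D_u|\ge d-1$. The structural heart of the argument is the observation that the sets $D_u$, $u\in N(v)$, are pairwise disjoint: if $w\in D_u\cap D_{u'}$ for distinct $u,u'\in N(v)$, then $v,u,w,u'$ are four vertices with $v\sim u$, $u\sim w$, $w\sim u'$, $u'\sim v$, and one checks they are genuinely distinct (e.g. $w\ne v$ since $w\in D_u$, $w\ne u$ and $w\ne u'$ since $w$ is a neighbour of each, etc.), giving a $C_4$, contradiction. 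Hence $S:=\bigcup_{u\in N(v)}D_u$ is a disjoint union, so $|S|=\sum_{u\in N(v)}|D_u|\ge|N(v)|(d-1)\ge d(d-1)$; moreover $v\notin S$ since each $D_u$ excludes $v$. Setting $T:=S\cup\{v\}$, we get $|T|\ge d^2-d+1$.

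Second, since $G$ has minimum degree $d$, $\sum_{w\in T}\deg_G(w)\ge d\,|T|\ge d(d^2-d+1)=d^3-d^2+d$. On the other hand, if $m_T$ denotes the number of edges of $G$ with at least one endpoint in $T$, then each such edge is counted at most twice in the degree sum $\sum_{w\in T}\deg_G(w)=2e(G[T])+e(T,V\setminus T)\le 2m_T$, so $m_T\ge\tfrac12\sum_{w\in T}\deg_G(w)\ge\tfrac12(d^3-d^2+d)$. Since $e(G)\ge m_T$ and $\tfrac12(d^3-d^2+d)=\tfrac{d^3}{2}-\tfrac{d^2}{2}+\tfrac d2>\tfrac{d^3}{2}-\tfrac{d^2}{2}$ whenever $d\ge 1$, this gives the claimed strict bound.

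I don't anticipate any real obstacle here; the argument is short. The two points that need a little care are the distinctness check in the $C_4$ step and obtaining the \emph{strict} inequality — the latter is precisely why I include $v$ in $T$ rather than working only with $S$, since the extra vertex contributes the additional $+d$ to the degree sum that pushes the count strictly above $\tfrac{d^3}{2}-\tfrac{d^2}{2}$. (One could instead just use $2e(G)\ge\sum_{w\in T}\deg_G(w)$ directly, but that throws away the edges inside $T$; keeping track of $m_T$ makes the bookkeeping cleanest.)
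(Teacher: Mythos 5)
Your proof is correct and follows essentially the same route as the paper's: use $C_4$-freeness to show that $v$ together with the (pairwise disjoint) punctured neighborhoods of its neighbors gives at least $d^2-d+1$ vertices, then finish with a degree-sum count; the paper phrases the structural step as ``each vertex has at most one neighbor in $N(v)$'' and bounds $|V(G)|$ directly via $e(G)\ge |V(G)|d/2$, which is the same argument in a slightly different dress. (Both versions implicitly assume $d\ge 1$, which is all that is needed for the application.)
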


\begin{proof}
	Let $G$ be a $C_4$-free graph with minimum degree $d$ and $v$ a vertex of degree $d$.
	As $G$ is $C_4$-free, each vertex in $N(v)$ is adjacent to at most one vertex in $N(v)$. Thus, each vertex in $N(v)$ has at least 
	$d-2$ neighbors not in $\{v\} \cup N(v)$. As $G$ is $C_4$-free, each vertex not in $\{v\} \cup N(v)$ has at most one neighbor in $N(v)$, so $G$ has at least $1+d+d(d-2)=d^2-d+1$ vertices. 
	Hence, the number of edges in $G$ is at least $|V(G)|d/2 \geq (d^2-d+1)d/2 > d^3/2-d^2/2$. 
\end{proof}

\begin{proof}[Proof of \cref{prop:count-c4-free-o}]
By iteratively peeling off lowest-degree vertices, we see that every $n$-vertex graph has an ordering $v_1, \dots, v_n$ of vertices (in reverse order of peeling) such that, for each $i$, $v_i$ is a minimum-degree vertex in the subgraph $G_i$ induced by $\{v_1, \dots, v_i\}$. Letting $d_i$ denote the degree of $v_i$ in $G_i$, we see that the minimum degree of $G_{i-1}$ is at least $d_i - 1$. 

By \cref{lem:c4-free-min-deg}, every induced subgraph of a $C_4$-free graph with $m$ edges has minimum degree $O(m^{1/3})$. In particular, if the graph has $n$ vertices and $m=o(n^{3/2})$, then $d_i=o(\sqrt{n})$ for all $i$.

For each fixed ordering of the vertices ($n!$ possibilities) and each fixed sequence $d_2, \dots, d_n$ of degrees (at most $n!$ possibilities), noting that there are at most $g_i(d_{i+1})$ ways to attach $v_{i+1}$ to $G_i$, we see that the number of $C_4$-free graphs with these parameters is at most $g_1(d_2) g_2(d_3) \cdots g_{n-1}(d_n)$. By \cref{lem:gnd}, this count is $e^{o(n^{3/2})}$, even after summing over the at most $n!^2$ possibilities for the parameters.
\end{proof}

\end{document}